\newcommand{\leqnomode}{\tagsleft@true}
\newcommand{\reqnomode}{\tagsleft@false}
\newtheorem{teore}{Theorem}[section]
\newtheorem{obs}[teore]{Remark}
\newtheorem{defi}[teore]{Definition}
\newtheorem{coro}[teore]{Corollary}
\newtheorem{lem}[teore]{Lemma}
\newcommand{\ka}{\kappa}
\newcommand{\R}{\mathbb{R}}
\newcommand{\C}{\mathbb{C}}
\newcommand{\psib}{\mbox{\boldmath$\psi$}}
\newcommand{\ub}{\mathbf{u}}
\newcommand{\zb}{\mathbf{z}}
\numberwithin{equation}{section}
\title[NLS systems with $p$-type nonlinearities]{Blow-up solutions to a class of nonlinear coupled Schr\"odinger systems  with  power-type-growth nonlinearities}
\author[N. Noguera]{Norman Noguera}
\address{SM-UCR, Ciudad Universitaria Carlos Monge Alfaro, Departamento de Ciencias Naturales, Apdo: 111-4250, San Ram\'on, Alajuela, Costa Rica}
\email{norman.noguera@ucr.ac.cr}
\begin{document}

\begin{abstract}
In this work we consider a system of nonlinear Schr\"{o}dinger equations whose  nonlinearities satisfy a power-type-growth. First, we prove that the Cauchy problem  is local and global well-posedness in $L^2$ and $H^1$. Next, we establish the existence of ground state solutions. Then we use these solutions to study the dichotomy of global existence versus blow-up in finite time. Similar results were presented in the reference \cite{NoPa2} for the special case when the growth of the nonlinearities was quadratic. Here we will extend them to systems with nolinearities of  order $p$ (cubic, quartic and so on). Finally, we recover some known results for two particular systems, one with quadratic and the other with cubic growth nolinearities. 
\end{abstract}

\subjclass{35A01, 35B44, 35J50, 35Q55}

\keywords{Nonlinear Schr\"{o}dinger equations; global well-posedness; blow-up; ground; mass-resonance 
states}

\maketitle	

\tableofcontents

\section{Introduction}

In recent years, a great deal of work has been devoted to the study of the dynamics of nonlinear coupled Schr\"{o}dinger systems whose nonlinearities satisfy a certain 

\begin{equation}\label{condp}
\mbox{power-like $p$-ic growth.}
\end{equation}

As in the case of the single  Schr\"{o}dinger equation the behavior of the nonlinearities, such  as \eqref{condp}, plays a fundamental roll to  address the study. Several authors have devoted themselves to the study of systems whose  nonlinearities satisfy a  quadratic growth, that is $p=2$ in \eqref{condp}, to cite a few consider the works, \cite{Pastor},\cite{hamano2018global}, \cite{inui2020blow}, \cite{inui2019scattering}, \cite{mengxu2020}, \cite{wang2021} and references therein. A particular model of system  with quadratic nonlinearities, which  has been studied lately  by many authors,  from a mathematical point of view, is the following
 \begin{equation}\label{system1J}
\begin{cases}
\displaystyle i\partial_{t}u+\Delta u=-2\overline{u}v,\\
\displaystyle i\partial_{t}v+\kappa\Delta v=- u^{2},
\end{cases}
\end{equation}
where $\kappa>0$ is a constant.  It appeared for the first time in reference  \cite{Hayashi}. There, the Cauchy problem with initial data in $L^2$, $H^1$ and weighted-$L^2$ spaces was studied in dimensions $1\leq n\leq 6$. For this system, dimensions  $n=4$ and $n=6$ correspond to the critical cases for $L^2$ and $H^1$ spaces, respectively. For \eqref{system1J},  in \cite{Hayashi} the local and global well-posednees theories were established. It was also proved existence of ground state solutions in dimensions $1\leq n\leq 6$. The authors  also gave a sufficient sharp condition for global existence solutions in $n=4$. In the $L^2$ supercritical and $H^1$ subcritical case, $n=5$, in  references \cite{hamano2018global}  and  \cite{NoPa} the dichotomy of blow-up versus existence of global solutions was studied. A lot of works around system \eqref{system1J} can be found in the literature. These works include, among other results, stability  and instability of standing waves solutions, existence of scattering  and blow-up solutions (see  for example \cite{zhang2018stable},  \cite{dinh2020existence}, \cite{dinh2020instability},\cite{inui2019scattering}, \cite{inui2020blow} and \cite{hamano2018global}).\\

A   generalization of system \eqref{system1J} was considered in reference \cite{NoPa2}. The authors studied the following system 

\begin{equation}\label{system10}
\begin{cases}
\displaystyle i\alpha_{k}\partial_{t}u_{k}+\gamma_{k}\Delta u_{k}-\beta_{k} u_{k}=-f_{k}(u_{1},\ldots,u_{l})\\
(u_{1}(x,0),\ldots,u_{l}(x,0))=(u_{10},\ldots,u_{l0}),\qquad k=1,\ldots l,
\end{cases}
\end{equation}
where $u_{k}:\R^{n}\times \R\to \C$, $(x,t)\in \R^{n}\times \R$, $\alpha_{k}, \gamma_{k}>0$, $\beta_{k}\geq0$ are real constant and the nonlinearities, $f_{k}$, satisfy \eqref{condp} with   $p=2$, which  fundamentally is represented as the following property
\begin{equation}\label{growth2}
\left|f_{k}(\mathbf{z})\right|\leq C\sum_{j=1}^{l}|z_{j}|^{2}, \qquad k=1\ldots, l.
\end{equation}
The authors proposed a group of assumptions that allowed them to study certain aspect of the dynamic of  \eqref{system10} and also to achieved \eqref{growth2}. More specifically, was consider system \eqref{system10} endowed with the following assumptions.

\newtheorem{thmx}{}
\renewcommand\thethmx{(H1)}
\begin{thmx}\label{H1}
	We have
	\begin{align*}
		f_{k}(\mathbf{0})=0, \qquad  k=1,\ldots,l.
	\end{align*}
\end{thmx}

\renewcommand\thethmx{(H2)}
\begin{thmx}\label{H2}
 For any $\mathbf{z},\mathbf{z}'\in \C^{l}$  we have
	\begin{equation*}
		\begin{split}
			\left|\frac{\partial }{\partial z_{m}}[f_{k}(\mathbf{z})-f_{k}(\mathbf{z}')]\right|+ \left|\frac{\partial }{\partial \overline{z}_{m}}[f_{k}(\mathbf{z})-f_{k}(\mathbf{z}')]\right|&\lesssim \sum_{j=1}^{l}|z_{j}-z_{j}'|,\qquad k,m=1,\ldots,l,
		\end{split}
	\end{equation*}
\end{thmx}

\renewcommand\thethmx{(H3)}
\begin{thmx}\label{H3}
	There exists a function $F:\C^{l}\to \C$,  such that
	\begin{equation*}
		f_{k}(\mathbf{z})=\frac{\partial F}{\partial \overline{z}_{k}}(\mathbf{z})+\overline{\frac{\partial F }{\partial z_{k}}}(\mathbf{z}),\qquad k=1\ldots,l.
	\end{equation*}
\end{thmx}

\renewcommand\thethmx{(H4)}
\begin{thmx}\label{H4}
 For any $ \theta \in \R$ and $\mathbf{z}\in \mathbb{C}^{l}$,
\begin{equation*}
\mathrm{Re}\,F\left(e^{i\frac{\alpha_{1}}{\gamma_{1}}\theta  }z_{1},\ldots,e^{i\frac{\alpha_{l}}{\gamma_{l}}\theta  }z_{l}\right)=\mathrm{Re}\,F(\mathbf{z}).
	\end{equation*}	
\end{thmx}

\renewcommand\thethmx{(H5)}
\begin{thmx}\label{H5}
	Function $F$ is homogeneous of degree $3$, that is, for any $\mathbf{z}\in \mathbb{C}^{l}$ and $\lambda >0$,
	\begin{equation*}
		F(\lambda \mathbf{z})=\lambda^{3}F(\mathbf{z}).
	\end{equation*}
\end{thmx}

\renewcommand\thethmx{(H6)}
\begin{thmx}\label{H6}
	There holds
	\begin{equation*}
		\left|\mathrm{Re}\int_{\R^{n}} F(\ub)\;dx\right|\leq \int_{\R^{n}} F(\!\!\big\bracevert\!\! \mathbf{u}\!\!\big\bracevert\!\!)\;dx.
	\end{equation*}
\end{thmx}

\renewcommand\thethmx{(H7)}
\begin{thmx}\label{H7}
	Function $F$ is real valued on $\R^l$, that is, if $(y_{1},\ldots,y_{l})\in \R^{l}$ then
	\begin{equation*}
		F(y_{1},\ldots,y_{l})\in \R.
	\end{equation*}
	Moreover, functions	$f_k$ are non-negative on the positive cone in $\mathbb{R}^l$, that is, for $y_i\geq0$, $i=1,\ldots,l$,
	\begin{equation*}
		f_{k}(y_{1},\ldots,y_{l})\geq0.
	\end{equation*}	
\end{thmx}

\renewcommand\thethmx{(H8)}
\begin{thmx}\label{H8}
	Function $F$ can be written as the sum $F=F_1+\cdots+F_m$, where $F_s$, $s=1,\ldots, m$ is super-modular on $\R^d_+$, $1\leq d\leq l$ and vanishes on hyperplanes, that is, for any $i,j\in\{1,\ldots,d\}$, $i\neq j$ and $k,h>0$, we have
	\begin{equation*}
		F_s(y+he_i+ke_j)+F_s(y)\geq F_s(y+he_i)+F_s(y+ke_j), \qquad y\in \R^d_+,
	\end{equation*}
	and $F_s(y_1,\ldots,y_d)=0$ if $y_j=0$ for some $j\in\{1,\ldots,d\}$.
\end{thmx}

Under assumptions \ref{H1}-\ref{H8}, in reference \cite{NoPa2}, was proved local and global existence in  $L^{2}$ y $H^{1}$ spaces,  in dimensions $1\leq n\leq 5$ . Also was proved existence of standing waves solutions analyzing the ground states solutions and conditions to get global and blow-up solutions in terms of the ground state were established. The scattering problem, in dimension $n=5$, of this system in the case of mass-resonance (see Section \ref{sec.masreso}) was considered in \cite{NoPa4}. The mass-resonance assumption in the scattering problem was dropped in reference  \cite{NoPa5}. Finally, the existence of ground state solutions in the critical case, that is, $n=6$ was studied in \cite{NoPa3}.  \\

 Systems such as \eqref{system1J} or \eqref{system10}, with   $p=2$ in \eqref{condp}, appear in  physics phenomena in nonlinear optics, for example  the Second Harmonic Generation (SHG) or frequency doubling. These models arise when a light beam interact with a material who has a  $\chi^{(2)}$ response. Basically, two photons interact to form new photons having twice the frequency of those initial photons. As an application of this nonlinear process it is possible to obtain imaging collagen fibers in organs such as lung, kidney, and liver as well as in connecting
tissues such as tendon, skin, bones, and blood vessels (see \cite[Chapter 5]{Kumar2018}).\\

Similarly, in the literature we find coupled nonlinear Schr\"{o}dinger systems with nonlinearities satisfying a cubic growth, that is  $p=3$ in \eqref{condp}, arising in physical phenomena.  These systems appear, for example,  by describing a light propagation in material with Kerr ($\chi^{(3)}$) response. A particular model of such a systems is given by 
\begin{equation}\label{systemFA}
\begin{cases}
\displaystyle i\partial_{t}u+\Delta u-u+\left(\frac{1}{9}|u|^{2}+2|w|^{2}\right)u+\frac{1}{3}\overline{u}^{2}w=0,\\
\displaystyle i\sigma\partial_{t}w+\Delta w-\mu w+ (9|w|^{2}+2|u|^{2})w+\frac{1}{9}u^{3}=0,
\end{cases}
\end{equation}
where $\sigma,\mu>0$. In this case, we have $p=3$ in \eqref{condp}. The one dimensional version of the system \eqref{systemFA} was derived in \cite{sammut1998bright}. The local and global Cauchy problem for periodic initial data was studied in reference \cite{angulo2009stability}. There were also obtained results related with  linear and nonlinear stability. In reference \cite{oliveira2018schr} system \eqref{systemFA}  was considered  with initial data in $H^1$ for dimensions  $1\leq n\leq 3$. The authors studied the local and global problem,  proved existence of ground state solutions, analyzed their stability and gave some criteria for the existence of blow-up solutions.   More recently, in reference \cite{ardila2021sharp}   was proved a scattering result in $H^1(\R^3)$ for the radial and nonradial case.   \\

\newpage

  The aim of this work is to study the dichotomy of global existence versus blow-up in finite time of the system

  \begin{equation}\label{system1}
\begin{cases}
\displaystyle i\alpha_{k}\partial_{t}u_{k}+\gamma_{k}\Delta u_{k}-\beta_{k} u_{k}=-f_{k}(u_{1},\ldots,u_{l})\\
(u_{1}(x,0),\ldots,u_{l}(x,0))=(u_{10},\ldots,u_{l0}),\qquad k=1,\ldots l,
\end{cases}
\end{equation}
by modifying the assumptions,  established in \cite{NoPa2} to obtain a more general system.  Since here we consider a power-type-growth of order $p$, we simultaneously study a broad class of nonlinearities ( and hence nonlinear Schr\"{o}dinger systems) such as \eqref{system1J} and \eqref{systemFA}. Specifically, compared to this previous work,  instead of assumptions \ref{H2}, \ref{H4} and \ref{H5} here we assume,
  \renewcommand\thethmx{(H2*)}
\begin{thmx}\label{H2*}
Let $p>1$. For any $\mathbf{z},\mathbf{z}'\in \C^{l}$  we have
	\begin{equation*}
		\begin{split}
			\left|\frac{\partial }{\partial z_{m}}[f_{k}(\mathbf{z})-f_{k}(\mathbf{z}')]\right|+ \left|\frac{\partial }{\partial \overline{z}_{m}}[f_{k}(\mathbf{z})-f_{k}(\mathbf{z}')]\right|&\lesssim \sum_{j=1}^{l}|z_{j}-z_{j}'|^{p-1},\qquad k,m=1,\ldots,l,
		\end{split}
	\end{equation*}
\end{thmx}
  
  \renewcommand\thethmx{(H4*)}
\begin{thmx}\label{H4*}
There exist positive constants $\sigma_{1},\ldots,\sigma_{l}$ such that for any $\mathbf{z}\in \mathbb{C}^{l}$
\begin{equation*}
\mathrm{Im}\sum_{k=1}^{l}\sigma_{k}f_{k}(\mathbf{z})\overline{z}_{k}=0 .
	\end{equation*}	
\end{thmx}

\renewcommand\thethmx{(H5*)}
\begin{thmx}\label{H5*}
	Function $F$ is homogeneous of degree $p+1$, that is, for any $\mathbf{z}\in \mathbb{C}^{l}$ and $\lambda >0$,
	\begin{equation*}
		F(\lambda \mathbf{z})=\lambda^{p+1}F(\mathbf{z}).
	\end{equation*}
\end{thmx}

\begin{obs}
Here some comments about these hypotheses. 
\begin{enumerate}
    \item[(i)] Assumptions \textnormal{\ref{H2*}} and \textnormal{\ref{H5*}},  are related to the behavior of nonlinearities $f_k$. In fact with these hypothesis we can guarantee
    \eqref{condp}, that is, $f_k$ satisfies (see Lemma \ref{estdifF}) for $p>1$
    \begin{equation*}
\left|f_{k}(\mathbf{z})\right|\leq C\sum_{j=1}^{l}|z_{j}|^{p}, \qquad k=1\ldots, l.
\end{equation*}
    \item[(ii)]  Assumption \textnormal{\ref{H4*}}  assure that \textit{a priori},  system \eqref{system1} does not satisfy the mass resonance condition (see condition \eqref{RC} also reference \cite{NoPa3}).
\end{enumerate}
    
\end{obs}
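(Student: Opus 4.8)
The target is the estimate of part (i), namely $\left|f_{k}(\mathbf{z})\right|\leq C\sum_{j=1}^{l}|z_{j}|^{p}$ for $k=1,\ldots,l$ (Lemma \ref{estdifF}), which I would deduce from \ref{H5*} and \ref{H2*}, together with \ref{H1} and \ref{H3} (retained from the original list). The first step is to upgrade the homogeneity of $F$ to a homogeneity property of the $f_k$. Differentiating the identity $F(\lambda\mathbf{z})=\lambda^{p+1}F(\mathbf{z})$ of \ref{H5*} with respect to $\bar z_k$ and $z_k$, and using that the dilation $\mathbf{z}\mapsto\lambda\mathbf{z}$, $\lambda>0$, is real, the Wirtinger chain rule gives $(\partial F/\partial\bar z_k)(\lambda\mathbf{z})=\lambda^{p}(\partial F/\partial\bar z_k)(\mathbf{z})$ and the analogous identity for $\partial F/\partial z_k$; inserting these into the representation $f_k=\partial F/\partial\bar z_k+\overline{\partial F/\partial z_k}$ of \ref{H3} shows that each $f_k$ is positively homogeneous of degree $p$, i.e. $f_k(\lambda\mathbf{z})=\lambda^{p}f_k(\mathbf{z})$ for all $\lambda>0$ and $\mathbf{z}\in\C^l$.

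Next I would record that $f_k$ is continuous: setting $\mathbf{z}'=\mathbf{0}$ in \ref{H2*} bounds the Wirtinger derivatives of $f_k$ by their value at the origin plus $O(\sum_j|z_j|^{p-1})$, so they are locally bounded, and with $f_k(\mathbf{0})=0$ from \ref{H1} this makes $f_k$ locally Lipschitz. Hence $f_k$ attains a finite maximum modulus $M_k$ on the compact unit sphere $S=\{\mathbf{z}\in\C^l:\sum_j|z_j|^2=1\}$. For $\mathbf{z}\neq\mathbf{0}$, writing $\mathbf{z}=\|\mathbf{z}\|_2\,\hat{\mathbf{z}}$ with $\hat{\mathbf{z}}\in S$ and applying the homogeneity from the first step gives $|f_k(\mathbf{z})|=\|\mathbf{z}\|_2^{\,p}\,|f_k(\hat{\mathbf{z}})|\le M_k\|\mathbf{z}\|_2^{\,p}$, and the equivalence of the $\ell^2$ and $\ell^p$ norms on the finite-dimensional space $\C^l$ converts this into $|f_k(\mathbf{z})|\le C\sum_{j=1}^l|z_j|^p$, the case $\mathbf{z}=\mathbf{0}$ being trivial by \ref{H1}.

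The step I expect to be most delicate is the first one: one must run the Wirtinger calculus for the real dilation carefully, verifying that differentiating a degree-$(p+1)$ homogeneous $F$ lowers the degree of each first-order derivative by exactly one and that these derivatives are regular enough (continuous at the origin) for the scaling identity to persist there. A route that avoids the compactness argument altogether is to integrate along the ray $t\mapsto f_k(t\mathbf{z})$: homogeneity of degree $p$ forces the first-order derivatives of $f_k$ to be homogeneous of degree $p-1>0$ and hence to vanish at $\mathbf{0}$, so \ref{H2*} with $\mathbf{z}'=\mathbf{0}$ improves to $|\nabla f_k(\mathbf{z})|\lesssim\sum_j|z_j|^{p-1}$, whence $|f_k(\mathbf{z})|\le\int_0^1|\nabla f_k(t\mathbf{z})|\,|\mathbf{z}|\,dt\lesssim\sum_{j=1}^l|z_j|^p$ gives the bound directly.
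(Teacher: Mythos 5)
Your argument is correct, and your primary route is genuinely different from the paper's. The paper obtains the bound $|f_k(\mathbf{z})|\le C\sum_j|z_j|^p$ as estimate \eqref{growthp} of Lemma \ref{limfk}, by applying the Fundamental Theorem of Calculus along the segment joining $\mathbf{z}'$ to $\mathbf{z}$ to get the difference estimate \eqref{estdiffkeq} from \textnormal{\ref{H1}} and \textnormal{\ref{H2*}} alone, and then setting $\mathbf{z}'=\mathbf{0}$; homogeneity of $f_k$ (Lemma \ref{estdifF} (iv)) is derived separately and is not invoked for the growth bound. Your main argument instead makes homogeneity the engine: you first deduce $f_k(\lambda\mathbf{z})=\lambda^p f_k(\mathbf{z})$ from \textnormal{\ref{H3}} and \textnormal{\ref{H5*}} via the Wirtinger chain rule, then combine continuity (from \textnormal{\ref{H1}}--\textnormal{\ref{H2*}}) with compactness of the unit sphere and norm equivalence on $\C^l$. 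This costs you the extra hypotheses \textnormal{\ref{H3}} and \textnormal{\ref{H5*}}, but it matches the Remark's own attribution of the growth to \textnormal{\ref{H2*}} and \textnormal{\ref{H5*}} and its pointer to Lemma \ref{estdifF}, and it sidesteps any issue about the value of the first derivatives of $f_k$ at the origin. Your closing alternative is essentially the paper's proof, and it makes explicit a point the paper's one-line argument glosses over: taking $\mathbf{z}'=\mathbf{0}$ in \textnormal{\ref{H2*}} bounds $\partial f_k/\partial z_m(\mathbf{z})$ only up to the constant $\partial f_k/\partial z_m(\mathbf{0})$, and one needs this constant to vanish -- which you supply by observing that the derivatives are homogeneous of positive degree $p-1$ -- before the FTC along the ray yields the clean bound. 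Both routes are sound; part (ii) of the Remark is an interpretive comment requiring no proof, so your focus on part (i) is appropriate.
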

  
 Under assumptions \ref{H1}-\ref{H8}, with   \ref{H2*}, \ref{H4*} and \ref{H5*}  instead of  \ref{H2}, \ref{H4} and \ref{H5}, we give a sufficient condition for global well-posedness in $H^1$ of \eqref{system1} with and without mass-resonance,  and considering the mass resonance framework we show that such condition is sharp (see Theorems \ref{thm:globalexistencecondn=51} and \ref{thm:globalexistencecondn=52}). The conditions given in Theorems \ref{thm:globalexistencecondn=51} and \ref{thm:globalexistencecondn=52} depend not only of the dimension but also of the value of $p$. Particularly, Theorem \ref{thm:globalexistencecondn=52} is written in terms of the critical Sobolev index $s_{c}=\frac{n}{2}-\frac{2}{p-1}$.\\

An explanation of the importance of hypotheses  \ref{H1}-\ref{H8} and, hence \ref{H2*} and \ref{H4*}-\ref{H5*}, in the study of the dynamics of 
 a system such as \eqref{system1} can be found in the reference \cite{NoPa2}. Here we only mention that if $F_s$ is $C^2$ then \ref{H8} is equivalent to (see \cite{haj})
\begin{equation}\label{condsupmod}
\frac{\partial^2 F}{\partial{x_j}\partial{x_i}}\geq0.
\end{equation}


It is important to note that most of the nonlinearities in models with nonlinear response, for example   $\chi^{(2)}$ or $\chi^{(3)}$ have a polynomial form such as systems \eqref{system1J} and \eqref{system1}. However, there are systems which can show  non-polynomial nonlinearities, for instance, rational functions, \cite{Yew}. So these cases can be included in the study of a system like \eqref{system1}. On the other hand, as we can see,  the nonlinearities $f_k$ have no explicit form. This  allows us to study simultaneously and in a unified way a larger number of systems. \\

As we will see (section \ref{sec.gsbu}), the flow of  system \eqref{system1} conserves the mass and the energy  given by
\begin{equation}\label{conservationcharge}	Q(\mathbf{u}(t)):=\sum_{k=1}^{l}\frac{\sigma_{k}\alpha_{k}}{2}\|  u_{k}(t)\|_{L^{2}}^{2},
	\end{equation}
	
	and 
	\begin{equation}
\label{conservationenergy}
	E(\mathbf{u}(t)):=K(\mathbf{u})+L(\mathbf{u})
    -2P(\mathbf{u}),
\end{equation}
where $K$, $L$ and $P$ are defined by
\begin{equation}\label{funcK1}
K(\mathbf{u})=\sum_{k=1}^{l}\gamma_{k}\|\nabla u_{k}\|_{L^2}^{2}, \quad L(\mathbf{u})=\sum_{k=1}^{l}\beta_{k}\|u_{k}\|_{L^2}^{2} \quad \mbox{and} \quad P(\mathbf{u})=\mbox{Re}\int F(\mathbf{u})\;dx
\end{equation}

 Note  that the $\sigma_k$ in \eqref{conservationcharge} are the same  appearing in assumption \ref{H4}. \\

We also study the existence of standing waves solutions. These are solutions of system \eqref{system1} of the form 

\begin{equation}\label{standing}
u_{k}(x,t)=e^{i\frac{\sigma_{k}}{2}\omega t}\psi_{k}(x),\qquad k=1,\ldots,l,
\end{equation}
where the functions $\psi_{k}$ are real and decay to zero at infinity. By replacing \eqref{standing} into \eqref{system1}, we will see that  $\psi_{k}$ must satisfy the following nonlinear elliptic system
\begin{equation}\label{system3}
\displaystyle -\gamma_{k}\Delta \psi_{k}+\left(\frac{\sigma_{k}\alpha_{k}}{2}\omega+\beta_{k}\right) \psi_{k}=f_{k}(\psib),\qquad k=1,\ldots,l.
\end{equation}

The action functional associated to system \eqref{system3} is defined by
\begin{equation}\label{FunctionalI1}
I(\psib)=\frac{1}{2}\left[K(\psib)+\mathcal{Q}(\psib)\right]-P(\psib),
\end{equation}
where
\begin{equation}
\label{functionalQ}
\mathcal{Q}(\psib):=\sum_{k=1}^{l}\left(\frac{\sigma_{k}\alpha_{k}}{2}\omega+\beta_{k}\right)\| \psi_{k}\|_{L^2}^{2}.
\end{equation}

In order to find standing wave solutions for  system \eqref{system1}  we have to look for a very special type of solutions of system \eqref{system3}, actually weak solutions, called \textit{ground state}. These are nontrivial critical points of functional  $I$ which also minimizes it (see section \ref{sec.gs}). We denote  by $\mathcal{G}(\omega,\boldsymbol{\beta})$ the set of all ground states for system \eqref{system3}, where  $(\omega,\boldsymbol{\beta})$ indicates the dependence on the parameters $\omega$ and $\boldsymbol{\beta}=(\beta_1,\ldots,\beta_l)$.
\\

\subsection{Mass resonance condition}\label{sec.masreso}
A property that influences the long time behavior of the solutions of systems like \eqref{system1} is the mass-resonance condition. For system \eqref{system1J} this condition is reflected in the value of the parameter $\kappa$. In fact, system \eqref{system1J} satisfies the mass-resonance condition if $\displaystyle \kappa =\frac{1}{2}$ (see   \cite{Hayashi}). On the other hand, we say that system \eqref{systemFA} satisfies the mass-resonance condition if $\displaystyle \sigma =3$ (see   \cite{oliveira2018schr}). For a system like \eqref{system1}, we say that it  is in mass-resonance condition if
\begin{equation}\label{RC}
\mathrm{Im}\sum_{k=1}^{l}\frac{\alpha_{k}}{\gamma_{k}}f_{k}(\zb)\overline{z}_{k}=0, \quad \zb\in \mathbb{C}^{l},
\end{equation}
where $\alpha_k$ and $\gamma_k$ are the parameters appearing in the system (see \cite[Definition 1.1]{NoPa3}).\\

\begin{obs}
    Note the similarity between \eqref{RC} and assumption \textnormal{\ref{H4}}. In principle, we are not assuming any relation between the value $\sigma_k$ and the quotient $\displaystyle \frac{\alpha_k}{\gamma_k}$, $k=1,\ldots, l$. But, for example when there exists a positive constant $\rho$ such that $\sigma_k=\rho\displaystyle \frac{\alpha_k}{\gamma_k}$ for each $k=1,\ldots, l$, then  system \eqref{system1} is, in fact, in mass resonance condition. 
\end{obs}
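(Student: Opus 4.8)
The plan is to obtain the mass-resonance identity \eqref{RC} directly from assumption \ref{H4*} by a one-line scaling argument, exploiting that $\rho$ is a nonzero real constant. First I would recall that \ref{H4*} furnishes positive constants $\sigma_1,\ldots,\sigma_l$ for which
\begin{equation*}
\mathrm{Im}\sum_{k=1}^{l}\sigma_{k}f_{k}(\zb)\overline{z}_{k}=0
\end{equation*}
holds for every $\zb\in\C^{l}$. Under the stated hypothesis $\sigma_k=\rho\,\alpha_k/\gamma_k$, the key observation is that substituting this relation into the sum and factoring out the common scalar $\rho$ converts the left-hand side into $\rho$ times precisely the quantity appearing in \eqref{RC}.

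More concretely, since $\rho$ is positive and hence real, it commutes with the imaginary part, i.e. $\mathrm{Im}(\rho w)=\rho\,\mathrm{Im}(w)$ for every $w\in\C$. Applying this after the substitution yields
\begin{equation*}
0=\mathrm{Im}\sum_{k=1}^{l}\sigma_{k}f_{k}(\zb)\overline{z}_{k}=\mathrm{Im}\sum_{k=1}^{l}\rho\,\frac{\alpha_{k}}{\gamma_{k}}f_{k}(\zb)\overline{z}_{k}=\rho\,\mathrm{Im}\sum_{k=1}^{l}\frac{\alpha_{k}}{\gamma_{k}}f_{k}(\zb)\overline{z}_{k}.
\end{equation*}
Because $\rho>0$, I would then divide through by $\rho$ to conclude that $\mathrm{Im}\sum_{k=1}^{l}(\alpha_{k}/\gamma_{k})f_{k}(\zb)\overline{z}_{k}=0$ for all $\zb\in\C^{l}$, which is exactly the mass-resonance condition \eqref{RC}. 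Thus the system \eqref{system1} is indeed in mass resonance.

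There is essentially no analytic obstacle here: the argument is purely algebraic and rests only on the linearity of the finite sum together with the real-homogeneity of the imaginary part under multiplication by a real scalar. The single point worth flagging is that the strict positivity of $\rho$ is what legitimizes the final division, so the implication genuinely uses $\rho\neq 0$ rather than merely the proportionality $\sigma_k\propto\alpha_k/\gamma_k$; without it one could only infer that \ref{H4*} and \eqref{RC} differ by an overall constant factor.
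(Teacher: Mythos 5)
Your argument is correct and is exactly the immediate substitution the paper intends (it leaves the verification implicit in the remark): plugging $\sigma_k=\rho\,\alpha_k/\gamma_k$ into the identity of \ref{H4*}, pulling the real scalar $\rho$ out of $\mathrm{Im}$, and dividing by $\rho>0$ yields \eqref{RC}. Nothing further is needed.
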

As we mentioned, to be  in mass-resonance condition or not affects directly how to study the long time behavior of the solutions. For example,  in order to prove the existence of scattering and blow-solutions, an extra effort is needed by trying to control some terms appearing in the virial identities. In fact, if we define the variance 
\begin{equation}\label{variancemk}
V(t):=\int |x|^{2}\left(\sum_{k=1}^{l}\frac{\alpha_{k}^{2}}{\gamma_{k}}|u_{k}|^{2}\right)\;dx.
\end{equation}
we obtain (see section \ref{sec.gsbu})
\begin{equation}\label{V1mwmr}
\begin {split}
    V'(t)&=4\sum_{k=1}^{l}\alpha_{k}\mathrm{Im}\int x\cdot \nabla u_{k} \overline{u}_{k}\;dx-4\int |x|^{2}\mathrm{Im}\sum_{k=1}^{l}\frac{\alpha_{k}}{\gamma_{k}}f_{k}(\mathbf{u})\overline{u}_{k}\;dx
\end{split}
\end{equation}
and
\begin{equation}\label{V2mwmr}
\begin{split}
    V''(t)&=2 n(p-1)E(\mathbf{u}_0)-2 n (p-1) L(\mathbf{u})+2 (4-np+n)K(\mathbf{u})\\
    &\quad-4\frac{d}{dt}\left[\int |x|^{2}\mathrm{Im}\sum_{k=1}^{l}\frac{\alpha_{k}}{\gamma_{k}}f_{k}(\mathbf{u})\overline{u}_{k}\;dx\right].
\end{split}
\end{equation}

Thus, if system \eqref{system1} is under mass-resonance the last term appearing in \eqref{V1mwmr} and \eqref{V2mwmr} vanishes and we can apply a standard convex argument to get blow-up solutions. \\

Other properties which commonly are satisfied by the single Nonlinear Schr\"{o}dinger Equation are the   Galilean and the pseudo-conformal invariance (in the $L^2$-critical case). However, in the case of system \eqref{system1} this is true only when it is  under the mass-resonance condition. In section \ref{subconformal} we discuss the case of the pseudo-conformal transformation. \\

In the last years, the interest  in studying the dynamics of systems without mass-resonance condition has increased. Readers interested in  results regardeding  systems like \eqref{system1J} when $\displaystyle\kappa\neq \frac{1}{2}$ can see references \cite{hamano2019scattering}, \cite{inui2020blow}, \cite{inui2019scattering} and \cite{wang2019Sacttering}.    For   a more general case of systems with quadratic type nonlinearities which not satisfy the mass-resonance condition, see references \cite{NoPa3} and \cite{NoPa5}. In the case of system \eqref{systemFA}, a result in the non-mass-resonance case can be found in reference \cite{ardila2021sharp}. \\

\subsection{Main results}

Our main results settle the dichotomy between global well-posedness and existence of blow-up solutions for system \eqref{system1} under assumptions \ref{H1}-\ref{H8}. 
 \begin{teore}\label{thm:globalexistencecondn=51}
 	Assume $\mathbf{u}_0\in \mathbf{H}^{1}$ and let  $\mathbf{u}$ be the  solution of \eqref{system1} defined in the maximal  existence  interval $I$. 
 	\begin{enumerate}
  \item[(i)] If $1 < p<1+\frac{4}{n} $, then for any $\mathbf{u}_0\in \mathbf{H}^{1}$,  system \eqref{system1} has a unique  solution which can be extended globally in time, that is  $I=(-\infty,\infty)$.
 		\item[(ii)] Let $\psib \in \mathcal{G}(1,\boldsymbol{0}) $ and $p=1+\frac{4}{n}$. 

   \begin{enumerate}
       \item[a)]    If
 		\begin{equation}\label{L2GSCond}
 		Q(\mathbf{u}_0)<Q(\psib),	
 		\end{equation}
 		then the conclusion in (i) also holds.	
    \item[b]  Moreover,  under the mass-resonance condition, \eqref{RC}  exists an initial data $\mathbf{u}_0$ such that $Q(\mathbf{u}_0)=Q(\psib)$ and the corresponding solution blows up in finite time. 
   \end{enumerate}

 	\end{enumerate}
 
 \end{teore}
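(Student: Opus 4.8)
The plan is to handle the three parts through the conservation laws \eqref{conservationcharge}--\eqref{conservationenergy} together with sharp Gagliardo--Nirenberg estimates, reserving an explicit construction for the threshold blow-up in (ii)(b).

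For part (i) I would begin from the growth bound $|f_k(\mathbf{z})|\le C\sum_{j}|z_j|^{p}$ provided by Lemma \ref{estdifF}, which with the homogeneity \ref{H5*} gives $|P(\mathbf{u})|\lesssim\sum_{k}\|u_k\|_{L^{p+1}}^{p+1}$. The Gagliardo--Nirenberg inequality $\|u_k\|_{L^{p+1}}^{p+1}\lesssim\|u_k\|_{L^2}^{(p+1)-\frac{n(p-1)}{2}}\|\nabla u_k\|_{L^2}^{\frac{n(p-1)}{2}}$ has gradient exponent $\frac{n(p-1)}{2}<2$ precisely when $p<1+\frac{4}{n}$. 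Bounding $\|u_k\|_{L^2}$ by conservation of $Q$ and absorbing $\|\nabla u_k\|_{L^2}^{\frac{n(p-1)}{2}}\lesssim (K(\mathbf{u}))^{\frac{n(p-1)}{4}}$ into $K(\mathbf{u})$ via Young's inequality, I would rewrite $E(\mathbf{u})=K(\mathbf{u})+L(\mathbf{u})-2P(\mathbf{u})$, use $L(\mathbf{u})\ge0$, and deduce an a priori bound $K(\mathbf{u})\lesssim E(\mathbf{u}_0)+1$ uniform on $I$. Conservation of $E$ then controls $\|\mathbf{u}(t)\|_{\mathbf{H}^1}$ on $I$, and the blow-up alternative from the local theory forces $I=(-\infty,\infty)$.

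For part (ii)(a) the gradient exponent is critical, $\frac{n(p-1)}{2}=2$, so the Young trick fails and the sharp constant matters. Here I would invoke the sharp vectorial Gagliardo--Nirenberg inequality whose optimizer is the ground state $\psib\in\mathcal{G}(1,\boldsymbol{0})$ (Section \ref{sec.gs}): the amplitude scaling of \ref{H5*} together with the mass-preserving dilation $\mathbf{u}\mapsto\mu^{n/2}\mathbf{u}(\mu\,\cdot)$ fixes its homogeneities and yields $2P(\mathbf{u})\le K(\mathbf{u})\bigl(Q(\mathbf{u})/Q(\psib)\bigr)^{2/n}$, with equality at $\psib$ (equivalently $2P(\psib)=K(\psib)$, which follows from the Pohozaev--Nehari identities for the critical elliptic system \eqref{system3}). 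Substituting into \eqref{conservationenergy} and using $L(\mathbf{u})\ge0$ and mass conservation $Q(\mathbf{u})=Q(\mathbf{u}_0)$ gives $E(\mathbf{u}_0)\ge K(\mathbf{u})\bigl[1-(Q(\mathbf{u}_0)/Q(\psib))^{2/n}\bigr]$. The subthreshold hypothesis \eqref{L2GSCond} makes the bracket strictly positive, so $K(\mathbf{u})$ stays bounded on $I$ and global existence follows as before.

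For part (ii)(b) I would exhibit the threshold solution explicitly. The standing wave $u_k(x,t)=e^{i\frac{\sigma_k}{2}\omega t}\psi_k(x)$ built from $\psib$ is a global solution whose mass equals $Q(\psib)$. Under the mass-resonance condition \eqref{RC} system \eqref{system1} is invariant under the pseudo-conformal transformation of Section \ref{subconformal}; applying it to this standing wave yields a new solution $\mathbf{v}$, defined only up to a finite time, whose $L^2$ norms---hence whose mass---are preserved, so that $Q(\mathbf{v}(0))=Q(\psib)$, while $K(\mathbf{v}(t))$ concentrates and diverges as $t$ reaches the endpoint. I expect the main obstacle to lie exactly here: one must verify that the pseudo-conformal map sends solutions of \eqref{system1} to solutions, which is precisely where mass-resonance enters through the vanishing of the non-resonant terms in \eqref{V1mwmr}--\eqref{V2mwmr}, and then identify the finite blow-up time and rate. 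By comparison, parts (i) and (ii)(a) are routine energy arguments.
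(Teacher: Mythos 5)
Your proposal follows essentially the same route as the paper: parts (i) and (ii)(a) are the identical energy/Gagliardo--Nirenberg absorption arguments (with the sharp constant expressed through the ground state via Corollary \ref{corollarybestconstant}), and part (ii)(b) is the same pseudo-conformal construction applied to the standing wave generated by $\psib$. The only imprecision is your claim that mass-resonance enters (ii)(b) through the virial identities \eqref{V1mwmr}--\eqref{V2mwmr}; in the paper it enters through the Gauge condition of Remark \ref{obsmassreso}, which together with the degree-$p$ homogeneity of $f_k$ lets the quadratic phase factors pass through the nonlinearity in Lemma \ref{pseudoconf}.
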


In the $H^1$-intercritical case, we also have a sufficient condition for global existence solutions. We describe such a condition in the following theorem.
 \begin{teore}\label{thm:globalexistencecondn=52}
 	Assume $\mathbf{u}_0\in \mathbf{H}^{1}$ and let  $\mathbf{u}$ be the  solution of \eqref{system1} defined in the maximal  existence  interval $I$.
 	 Let $s_{c}=\frac{n}{2}-\frac{2}{p-1}$ and $\psib \in \mathcal{G}(1,\boldsymbol{0}) $. Assume  $1+\frac{4}{n}<p<\frac{n+2}{n-2}$ and in addition that  
 		\begin{equation}\label{conditionsharp1}
 		Q(\mathbf{u}_0)^{1-s_{c}}E(\mathbf{u}_0)^{s_{c}}<Q(\psib)^{1-s_{c}}\mathcal{E}(\psib)^{s_{c}},
 		\end{equation}
 		where $\mathcal{E}$ is the energy defined in \eqref{conservationenergy} with $\boldsymbol{\beta}=\boldsymbol{0}$.\\
 		\begin{enumerate}
 		    \item[(i)]  If 
 		\begin{equation}\label{conditionsharp2}
 		Q(\mathbf{u}_0)^{1-s_{c}}K(\mathbf{u}_0)^{s_{c}}<Q(\psib)^{1-s_{c}}K(\psib)^{s_{c}},
 		\end{equation}
 		then
 		\begin{equation*}
 		Q(\mathbf{u}_0)^{1-s_{c}}K(\mathbf{u}(t))^{s_{c}}<Q(\psib)^{1-s_{c}}K(\psib)^{s_{c}},\qquad t\in I.
 		\end{equation*}
 		In particular the initial value problem \eqref{system1} is globally well-posed in $\mathbf{H}^{1}$.

       \item[(ii)] Under the mass-resonance condition, \eqref{RC}, if  \begin{equation}\label{gradientcondblowup}
Q(\mathbf{u}_0)^{1-s_c}K(\mathbf{u}_0)^{s_c}>Q(\psib)^{1-s_c}K(\psib)^{s_c}.
\end{equation}
and
\begin{itemize}
    \item[a)] $x\mathbf{u}_0\in \mathbf{L}^{2}$ or;
    \item[b)]  $\mathbf{u}_0$ is radially symmetric, $1+\frac{4}{n}<p<\min\left\{\frac{n+2}{n-2},5\right\}$ and $n\geq 2$,
\end{itemize}
  then $I$ is finite. In particular, the solution blows-up in finite time. 
 		\end{enumerate}
	
 \end{teore}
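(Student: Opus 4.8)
The plan is to run a Holmer--Roudenko-type dichotomy argument driven by a sharp vector Gagliardo--Nirenberg inequality together with the virial identities \eqref{V1mwmr}--\eqref{V2mwmr}. First I would record the sharp inequality $2P(\mathbf{u})\le C_{GN}\,K(\mathbf{u})^{\frac{n(p-1)}{4}}Q(\mathbf{u})^{\frac{p+1}{2}-\frac{n(p-1)}{4}}$, whose exponents are forced by the two invariances of \eqref{system1} (multiplication of the components and spatial dilation) combined with the homogeneity \ref{H5*} of $F$, and whose optimal constant is attained precisely at a ground state $\psib\in\mathcal{G}(1,\boldsymbol{0})$. Testing the elliptic system \eqref{system3} (with $\omega=1$, $\boldsymbol{\beta}=\boldsymbol{0}$) against $\psi_k$ and against $x\cdot\nabla\psi_k$ yields the Nehari and Pohozaev identities $K(\psib)+\mathcal{Q}(\psib)=(p+1)P(\psib)$ and $\tfrac{n-2}{2}K(\psib)+\tfrac{n}{2}\mathcal{Q}(\psib)=nP(\psib)$; eliminating $P(\psib)$ and $\mathcal{Q}(\psib)$ gives the key relation $\mathcal{E}(\psib)=\frac{2s_c}{n}K(\psib)$, which converts the threshold \eqref{conditionsharp1} into a statement purely about the Gagliardo--Nirenberg functional.

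Next I would build the trapping function. Inserting the sharp inequality into $\mathcal{E}(\mathbf{u})=K(\mathbf{u})-2P(\mathbf{u})$ and using $L(\mathbf{u})\ge 0$ (so that $\mathcal{E}(\mathbf{u}(t))\le E(\mathbf{u}(t))=E(\mathbf{u}_0)$ along the flow) gives $E(\mathbf{u}_0)\ge g\big(K(\mathbf{u}(t))\big)$, where $g(y)=y-C_{GN}\,Q(\mathbf{u}_0)^{\theta}y^{n(p-1)/4}$ with $\theta=\frac{p+1}{2}-\frac{n(p-1)}{4}$. Since $p>1+\frac{4}{n}$ we have $\frac{n(p-1)}{4}>1$, so $g$ increases and then decreases with a single interior maximum at some $y_\ast$. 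A computation using the ground-state relations identifies $y_\ast$ and $g(y_\ast)$ with the right-hand sides of \eqref{conditionsharp2} and \eqref{conditionsharp1} in scale-invariant form, so \eqref{conditionsharp1} reads $E(\mathbf{u}_0)<g(y_\ast)$. Because $t\mapsto K(\mathbf{u}(t))$ is continuous and $g(K(\mathbf{u}(t)))\le E(\mathbf{u}_0)<g(y_\ast)$ can never be violated, $K(\mathbf{u}(t))$ is trapped on one side of $y_\ast$ for all $t\in I$, with a uniform gap: condition \eqref{conditionsharp2} keeps it strictly below $y_\ast$, while \eqref{gradientcondblowup} keeps it strictly above.

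For part (i) the trapping gives a uniform bound on $K(\mathbf{u}(t))$; together with conservation of $Q$ this bounds $\|\mathbf{u}(t)\|_{\mathbf{H}^1}$ uniformly on $I$, so the blow-up alternative from the local theory forces $I=(-\infty,\infty)$. For part (ii) I would use the virial machinery. Under the mass-resonance hypothesis \eqref{RC} the last terms of \eqref{V1mwmr}--\eqref{V2mwmr} vanish, and discarding the favorable term $-2n(p-1)L(\mathbf{u})\le 0$ leaves $V''(t)\le 2n(p-1)E(\mathbf{u}_0)+2\big(4-n(p-1)\big)K(\mathbf{u}(t))$. Since $p>1+\frac{4}{n}$ the coefficient $4-n(p-1)$ is negative; combining the upper trapping $K(\mathbf{u}(t))\ge y_\ast+\delta$ with $E(\mathbf{u}_0)<g(y_\ast)$ and the ground-state relation $\mathcal{E}(\psib)=\frac{2s_c}{n}K(\psib)$ shows $V''(t)\le -c<0$ uniformly (the degenerate case $E(\mathbf{u}_0)\le 0$ being even easier). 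In case (a), $x\mathbf{u}_0\in\mathbf{L}^2$ makes $V(t)$ finite and nonnegative, so the convexity bound $V(t)\le V(0)+V'(0)t-\frac{c}{2}t^2$ forces $V$ to become negative in finite time unless $I$ is finite; hence blow-up.

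The \emph{hard part} is case (b), where $x\mathbf{u}_0\notin\mathbf{L}^2$ and $V$ itself is ill-defined. Here I would replace $|x|^2$ by a radial cutoff weight $\phi_R$ that equals $|x|^2$ on $|x|\le R$ and stays bounded for $|x|\ge R$, and differentiate the localized variance $V_R$ twice; this reproduces the same leading term plus error terms supported in $\{|x|\ge R\}$. Controlling those errors is the crux: using the radial Sobolev (Strauss) decay $|u_k(x)|\lesssim |x|^{-(n-1)/2}\|u_k\|_{L^2}^{1/2}\|\nabla u_k\|_{L^2}^{1/2}$ together with the growth bound on $F$ coming from \ref{H2*} and \ref{H5*}, the nonlinear error is of order $R^{-(n-1)(p-1)/2}$ times a power of $K(\mathbf{u}(t))$, and a Young inequality absorbs that $K$-factor into the uniformly negative main term exactly when $1+\frac{4}{n}<p<\min\{\frac{n+2}{n-2},5\}$ and $n\ge 2$. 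Choosing $R$ large then yields $V_R''(t)\le -\frac{c}{2}<0$, and the same convexity contradiction forces $I$ to be finite. I expect this localized virial estimate, and the verification that the error exponents close precisely on the stated range of $p$, to be the most delicate step; the $\boldsymbol{\beta}=\boldsymbol{0}$ reduction in the energy normalization and the identification of $g(y_\ast)$ with the ground-state threshold are the remaining points needing care.
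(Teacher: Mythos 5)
Your proposal is correct and follows essentially the same route as the paper: the sharp vector Gagliardo--Nirenberg inequality with optimal constant expressed through the ground state, the Pohozaev-type identities giving $\mathcal{E}(\psib)=\frac{2s_c}{n}K(\psib)$, the continuity/trapping argument for $K(\mathbf{u}(t))$ (the paper packages this as Lemmas \ref{supercritcalcase} and \ref{corosupercritcalcase}, the latter supplying the uniform gap $\delta_2$ you mention), and then the convexity argument on the variance for (ii.a) and the localized virial with the Strauss radial lemma and Young's inequality for (ii.b), with the same range restriction $p<\min\{\frac{n+2}{n-2},5\}$. No substantive differences to report.
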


Here some comments about Theorems \ref{thm:globalexistencecondn=51} and \ref{thm:globalexistencecondn=52}.
 \begin{obs}
 \begin{enumerate}
 \item[(i)] Theorem \ref{thm:globalexistencecondn=51} said that in the $L^2$-subcritical case we can extend  local solutions to the global ones (in time) without any assumption on the initial data. In the $L^2$-critical case, in order to extend the solutions globally in time, we have to make an additional assumption; the mass of the initial data has to be less than the mass of any ground state solution. Finally, always in the $L^2$-critical case,  if the mass of the initial data coincides with the mass of any ground state and we assume the mass resonance condition, then we can construct a solution which blows up in finite time. 
 \item[(ii)] Theorem \ref{thm:globalexistencecondn=52} says that in the $L^2$-supercritical and $H^1$-subcritical case, the extension of local solutions to global can be reached if we consider now, a condition involving the energy and the $L^2$-norm of the gradient of the ground state solutions.  
     \item[(iii)]  We point out the importance of the mass-resonance condition in Theorems \ref{thm:globalexistencecondn=51} (iii) and \ref{thm:globalexistencecondn=52} (ii), since under this condition we can   show the existence of blow up solutions; in the first case, using the pseudo-conformal invariance  and, in the second case, a standard virial type identity; both properties available only   in this scenario. 
 \end{enumerate}
    
 \end{obs}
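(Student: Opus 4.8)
The statement is interpretive commentary on Theorems \ref{thm:globalexistencecondn=51} and \ref{thm:globalexistencecondn=52}, so rather than a theorem to be proved afresh it is a set of three clauses to be justified by tracing each one back to the corresponding part of those two theorems together with the conservation laws \eqref{conservationcharge}--\eqref{conservationenergy} and the virial identities \eqref{V1mwmr}--\eqref{V2mwmr}. For clause (i), the subcritical assertion ($1<p<1+\frac{4}{n}$) is justified by noting that the sharp Gagliardo--Nirenberg inequality together with mass conservation \eqref{conservationcharge} bounds the potential term $P(\mathbf{u})$ by a strictly sublinear power of the kinetic energy $K(\mathbf{u})$, so that the conserved energy \eqref{conservationenergy} forces an \emph{a priori} bound on $K(\mathbf{u}(t))$ uniform in $t$ with no smallness hypothesis on $\mathbf{u}_0$; this is exactly Theorem \ref{thm:globalexistencecondn=51}(i). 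The $L^2$-critical clauses are the verbal reading of Theorem \ref{thm:globalexistencecondn=51}(ii): the threshold $Q(\mathbf{u}_0)<Q(\psib)$ is precisely the condition under which the ground-state optimizer $\psib\in\mathcal{G}(1,\boldsymbol{0})$ of the sharp inequality keeps the coefficient of $K(\mathbf{u})$ in the energy strictly positive, whereas equality $Q(\mathbf{u}_0)=Q(\psib)$ together with \eqref{RC} opens the door to the pseudo-conformal construction of a blow-up solution.

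For clause (ii) the plan is simply to identify the threshold it describes with the scale-invariant product $Q^{1-s_c}K^{s_c}$, $s_c=\frac{n}{2}-\frac{2}{p-1}$, appearing in Theorem \ref{thm:globalexistencecondn=52}. The justification is that in the intercritical range the sharp constant is again realized by $\psib\in\mathcal{G}(1,\boldsymbol{0})$, and once the conserved quantity $Q(\mathbf{u}_0)^{1-s_c}E(\mathbf{u}_0)^{s_c}$ lies below the ground-state value \eqref{conditionsharp1}, a standard trapping argument based on the continuity of $t\mapsto K(\mathbf{u}(t))$ confines $Q^{1-s_c}K^{s_c}$ to the same side of the threshold on all of $I$, yielding the global kinetic bound and hence global well-posedness. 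Thus clause (ii) is nothing more than the interpretation of \eqref{conditionsharp1}--\eqref{conditionsharp2}, and requires no argument beyond pointing to that theorem.

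Clause (iii) carries the only genuine content beyond paraphrase, and here the plan is to exhibit precisely where the mass-resonance condition \eqref{RC} enters both blow-up constructions. The decisive observation is that the residual term
\[
-4\int |x|^{2}\,\mathrm{Im}\sum_{k=1}^{l}\frac{\alpha_{k}}{\gamma_{k}}f_{k}(\mathbf{u})\overline{u}_{k}\,dx
\]
in \eqref{V1mwmr}, and its time derivative in \eqref{V2mwmr}, vanish identically exactly when \eqref{RC} holds. Under \eqref{RC} the second virial identity collapses to
\[
V''(t)=2n(p-1)E(\mathbf{u}_0)-2n(p-1)L(\mathbf{u})+2(4-np+n)K(\mathbf{u}),
\]
a clean expression in the conserved energy and the kinetic and potential terms, to which the Glassey convexity argument applies and produces finite-time blow-up under \eqref{gradientcondblowup}; this is the mechanism behind Theorem \ref{thm:globalexistencecondn=52}(ii). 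In the $L^2$-critical case the same vanishing is what makes the pseudo-conformal transformation an exact symmetry of the flow, so that the explicit pseudo-conformal image of a ground-state standing wave \eqref{standing} is a genuine solution of \eqref{system1} blowing up in finite time, which is the mechanism behind the blow-up clause of Theorem \ref{thm:globalexistencecondn=51}(ii). The main point to make rigorous --- and the only nontrivial step in justifying the Remark --- is that without \eqref{RC} this residual term cannot in general be signed or controlled, so neither the convexity argument nor the exact pseudo-conformal symmetry survives; both blow-up constructions are consequently unavailable outside the mass-resonance scenario, exactly as the statement asserts.
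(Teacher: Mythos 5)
Your proposal is correct and follows essentially the same route as the paper: part (i) via the Gagliardo--Nirenberg bound with sublinear exponent $\frac{n(p-1)}{4}<1$ absorbed by Young's inequality, part (ii.a) via positivity of the coefficient $1-(Q(\mathbf{u}_0)/Q(\psib))^{2/n}$, part (ii) of Theorem \ref{thm:globalexistencecondn=52} via the trapping Lemma \ref{supercritcalcase}, and clause (iii) via the vanishing of the residual term in \eqref{V1mwmr}--\eqref{V2mwmr} for the convexity argument and the pseudo-conformal construction of Lemma \ref{pseudoconf} and Theorem \ref{sharpn=4}. The only slight imprecision is attributing the pseudo-conformal invariance to ``the same vanishing'' of the virial residual; in the paper it follows instead from the Gauge condition of Remark \ref{obsmassreso}(ii), though both are indeed consequences of the mass-resonance condition \eqref{RC}.
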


This paper is organized as follows. In section \ref{sec.conseq} we establish the notation used in the work. Also, we present some  consequences of   \ref{H1}-\ref{H8} and state some technical lemmas needed.  In section \ref{sec.lgt}, we use the results of section \ref{sec.conseq} to  study the local well-posedness  of system \eqref{system1} in the Sobolev spaces $L^{2}$ and  $H^{1}$. This is done using the contraction mapping fixed point theorem by looking for solutions of the system in a suitable space, depending on $p$.  Section \ref{sec.gs} is devoted to establishing  existence of ground state solutions for the associated elliptic system and, as a consequence, existence of standing wave solutions for the system \eqref{system1}.  In section \ref{sec.gsbu} we prove the main results of this work: Theorems \ref{thm:globalexistencecondn=51} and \ref{thm:globalexistencecondn=52}. First we establish some conservation laws satisfied by the solutions of the system \eqref{system1}. Next, under the mass resonance condition we state a ``standard''  and local virial identity and prove the invariance of solutions of the system \eqref{system1} by the pseudo-conformal transformation. To prove part i) and ii.a) of Theorem  \ref{thm:globalexistencecondn=51}, we get an \textit{a priori} estimate to extend the local $H^1$ solutions  globally in time. For part ii.b) we construct  a blow-up solution using the the pseudo-conformal transformation invariance. To prove  part i)  in Theorem  \ref{thm:globalexistencecondn=52} we again obtain an \textit{a priori} estimate of the $L^2$ norm of the gradient of the solution. For part ii)  we use the virial indentities to apply a convex argument in  cases when solutions have a finite variance or are radial. Finally, in section \ref{sec.parti.cases} we recover some existing results in the literature related to the particular systems \eqref{system1J} and \eqref{systemFA}.

\section{Preliminaries}\label{sec.conseq}

In this section we introduce some notations and give some consequences of our assumptions.

\subsection{Notation}
We use $C$ to denote several constants that may vary line-by-line.
Given any set $A$, by  $\mathbf{A}$ we denote  the product  $\displaystyle A\times \cdots \times A $ ($l$ times). In particular, if $A$ is a Banach space with norm $\|\cdot\|$ then $\mathbf{A}$ is also a Banach space with the standard norm given by the sum. Given any complex number $z\in\mathbb{C}$, Re$z$ and Im$z$ represents its real and imaginary parts. Also, $\overline{z}$ denotes its complex conjugate. In $\mathbb{C}^l$ we frequently  write $\mathbf{z}$ and $\mathbf{z}'$ instead of  $(z_{1},\ldots,z_{l})$ and  $(z_{1}',\ldots,z_{l}')$. Given $\mathbf{z}=(z_{1},\ldots,z_{l})\in \mathbb{C}^l$, we write $z_m=x_m+iy_m$ where $x_m$ and $y_m$ are, respectively, the real and imaginary parts of $z_m$. As usual,  the operators $\partial/\partial z_m$ and $\partial/\partial \overline{z}_m$ are defined by
$$
\dfrac{\partial}{\partial z_m}=\frac{1}{2}\left(\frac{\partial}{\partial x_m} -i\frac{\partial}{\partial y_m}\right), \qquad\dfrac{\partial}{\partial \overline{z}_m}=\frac{1}{2}\left(\frac{\partial}{\partial x_m} +i\frac{\partial}{\partial y_m}\right).
$$ 
The spaces
 $L^{p}=L^{p}(\R^{n})$, $1\leq p\leq \infty$, and $W^s_p=W^s_p(\R^{n})$ denotes the usual Lebesgue and Sobolev spaces. In the case $p=2$, we use the standard notation $H^s=W^s_2$. The space  $H^{1}_{rd}=H^{1}_{rd}(\R^{n})$ is the subspace of radially symmetric non-increasing functions in $H^{1}$. 

To simplify notation, if no confusion is caused we use $\int f\, dx$ to denote  $\int_{\R^n} f\, dx$.
Given a time interval $I$, the mixed   spaces $L^p(I;L^q(\R^n))$ are endowed with the norm
$$
\|f\|_{L^p(I;L^q)}=\left(\int_I \left(\int_{\R^n}|f(x,t)|^qdx \right)^{\frac{p}{q}} dt \right)^{\frac{1}{p}},
$$
with the obvious modification if either $p=\infty$ or $q=\infty$. When the interval $I$ is implicit and no confusion will be caused we denote  $L^p(I;L^q(\R^n))$ simply by  $L^p(L^q)$ and its norm by $\|\cdot\|_{L^p(L^q)}$. More generally, if $X$ is a Banach space, $L^{p}(I;X)$ represents the $L^p$ space of $X$-valued functions defined on $I$.

\subsection{Consequences of H's assumptions}
 The following  properties characterize in certain way the nonlinearities, $f_k$ and the potential function appearing in \ref{H5*}. We can proof these properties using similar ideas to those presented in reference \cite{NoPa2}, but taking into account the modifications in assumptions \ref{H2*} and \ref{H5*}.

\begin{lem}\label{limfk}
Assume that \textnormal{\ref{H1}} and \textnormal{\ref{H2*}}  hold, 

\begin{itemize}
    \item[(i)]  Then
    \begin{equation}\label{estdiffkeq}
\begin{split}
\left|f_{k}(\mathbf{z})-f_{k}(\mathbf{z}')\right|&\leq  C \sum_{m=1}^{l}\sum_{j=1}^{l}(|z_{j}|^{p-1}+|z_{j}'|^{p-1})|z_{m}-z_{m}'|, \qquad k=1\ldots, l.
\end{split}
\end{equation}
and
\begin{equation}\label{growthp}
\left|f_{k}(\mathbf{z})\right|\leq C\sum_{j=1}^{l}|z_{j}|^{p}, \qquad k=1\ldots, l.
\end{equation}

\item[(ii)]  Let $\mathbf{u}$ and $\mathbf{u}'$ be complex-valued functions defined on $\R^n$. Then,
\begin{equation*}
 \begin{split}
 |\nabla[f_{k}(\mathbf{u})-f_{k}(\mathbf{u}'
 )]|&\leq  C\sum_{m=1}^{l}\sum_{j=1}^{l}|u_{j}|^{p-1}|\nabla(  u_{m}-u_{m}')|\\
 &\quad+C\sum_{m=1}^{l}\sum_{j=1}^{l}|u_{j}-u_{j}'|^{p-1}|\nabla u_{m}'|.
 \end{split}
 \end{equation*}
 \item[(iii)] Let $1\leq p_{1},q_{1},r_{1}\leq \infty$ and $p>1$ such that $\frac{1}{r_1}=\frac{p-1}{p_1}+\frac{1}{q_1}$. Assume that $\mathbf{u},\mathbf{u'}\in \mathbf{L}^{p_1}(\R^{n}) $ and $\nabla \mathbf{u}, \nabla \mathbf{u'}\in \mathbf{L}^{q_1}(\R^{n}) $. Then, for $k=1,\ldots,l$,
 \begin{equation*}
 \|\nabla[ f_{k}(\mathbf{u})-f_{k}(\mathbf{u'})]\|_{\mathbf{L}^{r_1}}\leq C\|\mathbf{u}\|^{p-1}_{\mathbf{L}^{p_1}}\|\nabla( \mathbf{u}-\mathbf{u'})\|_{\mathbf{L}^{q_1}}+C\|\mathbf{u}-\mathbf{u'}\|^{p-1}_{\mathbf{L}^{p_1}}\|\nabla \mathbf{u'}\|_{\mathbf{L}^{q_1}}.
 \end{equation*}
 In particular, taking $\mathbf{u'}=0$ we have
  \begin{equation*}
 \|\nabla f_{k}(\mathbf{u})\|_{\mathbf{L}^{r_1}}\leq C\|\mathbf{u}\|^{p-1}_{\mathbf{L}^{p_1}}\|\nabla \mathbf{u}\|_{\mathbf{L}^{q_1}}.
 \end{equation*}
\end{itemize}

\end{lem}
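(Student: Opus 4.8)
The plan is to prove the three parts of Lemma \ref{limfk} in order, deriving each from the assumptions \ref{H1}, \ref{H2*}, \ref{H5*} and the previous parts via elementary calculus and standard inequalities.

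For part (i), first I would establish \eqref{estdiffkeq}. The natural approach is to write the difference $f_k(\mathbf{z})-f_k(\mathbf{z}')$ as an integral of its derivative along the straight-line path $\mathbf{w}(t)=\mathbf{z}'+t(\mathbf{z}-\mathbf{z}')$, $t\in[0,1]$. Since $f_k$ is a function of $\mathbf{z}$ and $\overline{\mathbf{z}}$, the Wirtinger chain rule gives
\begin{equation*}
f_k(\mathbf{z})-f_k(\mathbf{z}')=\int_0^1\sum_{m=1}^l\left[\frac{\partial f_k}{\partial z_m}(\mathbf{w}(t))(z_m-z_m')+\frac{\partial f_k}{\partial \overline{z}_m}(\mathbf{w}(t))\overline{(z_m-z_m')}\right]dt.
\end{equation*}
To bound the derivative factors I would apply \ref{H2*} with $\mathbf{z}$ replaced by $\mathbf{w}(t)$ and $\mathbf{z}'=\mathbf{0}$, using \ref{H1} (together with the fact that \ref{H1} forces the first derivatives of $f_k$ to vanish at the origin, which itself follows from \ref{H2*} with $\mathbf{z}'=\mathbf{0}$ after noting $f_k(\mathbf{0})=0$); this yields $|\partial f_k/\partial z_m(\mathbf{w}(t))|\lesssim\sum_j|w_j(t)|^{p-1}$. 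Since $|w_j(t)|\leq |z_j|+|z_j'|$, the desired bound in terms of $(|z_j|^{p-1}+|z_j'|^{p-1})$ follows, absorbing the constant $2^{p-1}$ into $C$. Taking $\mathbf{z}'=\mathbf{0}$ in \eqref{estdiffkeq} and again invoking \ref{H1} immediately produces \eqref{growthp}.

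For part (ii), I would differentiate $f_k(\mathbf{u})-f_k(\mathbf{u}')$ using the chain rule, treating $\mathbf{u},\mathbf{u}'$ as functions on $\R^n$. The gradient expands as a sum over $m$ of derivative factors (again of Wirtinger type) multiplied by $\nabla(u_m-u_m')$ and $\nabla u_m'$. I would then organize the terms so that $\nabla(u_m-u_m')$ is paired with a derivative of $f_k$ evaluated at $\mathbf{u}$, bounded by $\sum_j|u_j|^{p-1}$ via \ref{H2*} (with $\mathbf{z}'=\mathbf{0}$), while $\nabla u_m'$ is paired with the difference of derivatives of $f_k$ at $\mathbf{u}$ and $\mathbf{u}'$, bounded directly by $\sum_j|u_j-u_j'|^{p-1}$ using \ref{H2*} as stated. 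This telescoping is the one genuinely delicate bookkeeping step, since it requires adding and subtracting $\frac{\partial f_k}{\partial z_m}(\mathbf{u}')\nabla(u_m-u_m')$ and regrouping; I expect this regrouping to be the main obstacle, as one must be careful to keep the two structurally different terms of the claimed bound separate. Part (iii) is then immediate: I would take $\mathbf{L}^{r_1}$ norms of the pointwise bound from (ii) and apply H\"older's inequality with exponents dictated by $\frac{1}{r_1}=\frac{p-1}{p_1}+\frac{1}{q_1}$, placing each factor $|u_j|^{p-1}$ (or $|u_j-u_j'|^{p-1}$) in $L^{p_1/(p-1)}$ and each gradient factor in $L^{q_1}$; summing over the finitely many indices $j,m$ gives the stated estimate, and setting $\mathbf{u}'=\mathbf{0}$ yields the particular case.
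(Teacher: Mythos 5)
Your overall route --- the fundamental theorem of calculus along the segment joining $\mathbf{z}'$ to $\mathbf{z}$ for (i), the chain rule with an add-and-subtract regrouping for (ii), and H\"older's inequality for (iii) --- is exactly the one the paper indicates in its (one-line) proof, and parts (ii) and (iii) of your argument are organized correctly.

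There is, however, one genuinely incorrect deduction, and it sits at the crux of part (i). You claim that the vanishing of the first derivatives of $f_k$ at the origin ``follows from \ref{H2*} with $\mathbf{z}'=\mathbf{0}$ after noting $f_k(\mathbf{0})=0$''. It does not. Take $l=1$ and $f_1(z)=z$: then $f_1(0)=0$, and the Wirtinger derivatives of $f_1$ are constant, so \ref{H2*} holds trivially (its left-hand side is identically $0$); yet $\partial_{z}f_1(0)=1\neq 0$, and indeed both \eqref{estdiffkeq} and \eqref{growthp} fail for this $f_1$ near the origin when $p>1$. Consequently the bound $|\partial_{z_m}f_k(\mathbf{w}(t))|\lesssim\sum_j|w_j(t)|^{p-1}$, on which your proof of (i) and the first half of your proof of (ii) rest, is not a consequence of \ref{H1} and \ref{H2*} alone; one needs the additional fact that $\nabla f_k(\mathbf{0})=\mathbf{0}$. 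In the paper's framework this comes for free from homogeneity: by Lemma \ref{estdifF}~(iv) the $f_k$ are homogeneous of degree $p>1$, so their first derivatives are homogeneous of degree $p-1>0$ and hence vanish at the origin. But homogeneity is derived from \ref{H3} and \ref{H5*}, which are not among the stated hypotheses of this lemma, and the paper's own proof sketch is silent on the point as well. To repair your argument you should either add $\nabla f_k(\mathbf{0})=\mathbf{0}$ as an explicit hypothesis or invoke the homogeneity of the $f_k$; with that in hand the remainder of your proof goes through.
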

 \begin{proof}
 	Part i) is a consequence of  the Fundamental Theorem of  Calculus and assumptions \textnormal{\ref{H1}}- \textnormal{\ref{H2*}}. Part ii) is proved by using the chain rule and  assumptions \textnormal{\ref{H1}}- \textnormal{\ref{H2*}}. Finally, combining part ii) and  H\"older's inequality, we obtain part iii). 
 \end{proof}

 \begin{obs}
 Note that \eqref{growthp} in fact,  gives us the power-like growth required for the nonlinearities $f_k$.
 \end{obs}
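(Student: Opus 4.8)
The plan is to observe that the inequality \eqref{growthp}, already established in Lemma \ref{limfk}(i), is nothing but the precise analytic formulation of the informal requirement \eqref{condp} of power-like $p$-ic growth. First I would recall that in the quadratic setting $p=2$ the growth condition \eqref{condp} was rendered concrete in \eqref{growth2} as the bound $|f_k(\mathbf{z})|\leq C\sum_{j=1}^l|z_j|^2$; thus the natural $p$-ic generalization is exactly $|f_k(\mathbf{z})|\leq C\sum_{j=1}^l|z_j|^p$, which is the content of \eqref{growthp}. Consequently, no argument beyond invoking Lemma \ref{limfk}(i) is strictly needed: the inequality \eqref{growthp} \emph{is} the desired growth, written componentwise.

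If one prefers to phrase the power-like growth in terms of a single norm on $\mathbb{C}^l$, I would add a short comparison step. Writing $|\mathbf{z}|=\left(\sum_{j=1}^l|z_j|^2\right)^{1/2}$ for the Euclidean norm, the equivalence of all norms on the finite-dimensional space $\mathbb{C}^l$ yields constants $c_1,c_2>0$, depending only on $l$ and $p$, with $c_1|\mathbf{z}|^p\leq \sum_{j=1}^l|z_j|^p\leq c_2|\mathbf{z}|^p$. Combining the upper bound with \eqref{growthp} gives $|f_k(\mathbf{z})|\leq Cc_2|\mathbf{z}|^p$, i.e. $|f_k(\mathbf{z})|\lesssim|\mathbf{z}|^p$, which is precisely the assertion that each $f_k$ grows like a $p$-th power. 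This makes explicit that \eqref{growthp} and the homogeneous bound $|f_k(\mathbf{z})|\lesssim|\mathbf{z}|^p$ are interchangeable up to dimensional constants, so either form certifies \eqref{condp}.

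The observation is essentially tautological, so there is no genuine obstacle; the only point worth flagging is its logical status. The growth bound \eqref{growthp} was derived in Lemma \ref{limfk}(i) directly from hypotheses \ref{H1} and \ref{H2*}, so the remark should be read as certifying that these structural hypotheses indeed deliver the power-like $p$-ic growth \eqref{condp} that motivates the whole class of systems \eqref{system1}, recovering the quadratic bound \eqref{growth2} as the special case $p=2$.
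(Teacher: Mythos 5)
Your proposal is correct and matches the paper's own treatment: the remark is interpretive, and the paper offers no argument beyond the identification of \eqref{growthp} (established in Lemma \ref{limfk}(i) from \textnormal{\ref{H1}} and \textnormal{\ref{H2*}}) with the informal growth requirement \eqref{condp}, exactly as you state. Your additional norm-equivalence step is harmless but unnecessary, since the paper phrases the growth componentwise just as in the quadratic case \eqref{growth2}.
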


 Now we establish some consequences of additionally assuming   \textnormal{\ref{H3}}- \textnormal{\ref{H5*}}.  We start with the following definition. 

\begin{defi}\label{defGC}
We say that functions $f_{k}$ satisfy the \textbf{Gauge condition} if for any $\theta \in \R$,
 \begin{equation}\label{gaugeCon}
f_{k}\left(e^{i\frac{\sigma_{1}}{2}\theta }z_{1},\ldots,e^{i\frac{\sigma_{l}}{2}\theta }z_{l}\right)=e^{i\frac{\sigma_{k}}{2}\theta }f_{k}(z_{1},\ldots,z_{l}),\qquad k=1,\ldots, l.\tag{GC}
\end{equation}
\end{defi}

\begin{lem}\label{H34impGC}
Assume that \textnormal{\ref{H3}} and \textnormal{\ref{H4*}} hold. 

\begin{itemize}
    \item[(i)] Let $\theta \in \R$ and $\zb \in \C^{l}$. Then, 
     \begin{equation*}
\mathrm{Re}\,F\left(e^{i\frac{\sigma_{1}}{2}\theta  }z_{1},\ldots,e^{i\frac{\sigma_{l}}{2}\theta  }z_{l}\right)=\mathrm{Re}\,F(\zb).
\end{equation*}

    \item[(ii)] The nonlinearities $f_k, k=1,\ldots, l$, satisfy the Gauge condition \eqref{gaugeCon}.
    
\end{itemize}

\end{lem}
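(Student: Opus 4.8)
The plan is to prove both parts by differentiating along the one-parameter family of gauge transformations $\zb(\theta):=(e^{i\frac{\sigma_1}{2}\theta}z_1,\ldots,e^{i\frac{\sigma_l}{2}\theta}z_l)$ and invoking \ref{H4*} at the crucial step. For part (i), I would fix $\zb\in\C^l$ and set $g(\theta):=\mathrm{Re}\,F(\zb(\theta))$, with the aim of showing $g'\equiv 0$, so that $g(\theta)=g(0)=\mathrm{Re}\,F(\zb)$ for all $\theta$. Since each component satisfies $\frac{d}{d\theta}(e^{i\frac{\sigma_k}{2}\theta}z_k)=i\frac{\sigma_k}{2}z_k(\theta)$, the Wirtinger chain rule gives
\begin{equation*}
\frac{d}{d\theta}F(\zb(\theta))=i\sum_{k=1}^l\frac{\sigma_k}{2}\left[\frac{\partial F}{\partial z_k}(\zb(\theta))\,z_k(\theta)-\frac{\partial F}{\partial \overline{z}_k}(\zb(\theta))\,\overline{z_k(\theta)}\right].
\end{equation*}

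Taking real parts and using $\mathrm{Re}(i\xi)=-\mathrm{Im}\,\xi$, the next step is to recognise the bracket in terms of $f_k$. Writing $A_k=\frac{\partial F}{\partial\overline{z}_k}(\zb(\theta))$ and $B_k=\frac{\partial F}{\partial z_k}(\zb(\theta))$, assumption \ref{H3} reads $f_k(\zb(\theta))=A_k+\overline{B_k}$, and a short computation shows $\mathrm{Im}(B_k z_k(\theta)-A_k\overline{z_k(\theta)})=-\mathrm{Im}(f_k(\zb(\theta))\,\overline{z_k(\theta)})$. Consequently
\begin{equation*}
g'(\theta)=\frac{1}{2}\,\mathrm{Im}\sum_{k=1}^l\sigma_k\, f_k(\zb(\theta))\,\overline{z_k(\theta)},
\end{equation*}
which vanishes identically by \ref{H4*} applied at the point $\zb(\theta)$. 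This yields $g'\equiv0$ and hence part (i).

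For part (ii), I would first observe that, by \ref{H3} together with the standard conjugation identity $\overline{\partial_{z_k}F}=\partial_{\overline{z}_k}\overline{F}$, the nonlinearity is exactly the $\overline{z}_k$-gradient of the real potential $\Phi:=\mathrm{Re}\,F$; namely $f_k=\partial_{\overline{z}_k}F+\overline{\partial_{z_k}F}=2\,\partial_{\overline{z}_k}\Phi$. I would then differentiate the invariance $\Phi(\zb(\theta))=\Phi(\zb)$ established in part (i) with respect to $\overline{z}_k$. Since each component $w_j:=e^{i\frac{\sigma_j}{2}\theta}z_j$ is holomorphic in $\zb$ (so that $\partial_{\overline{z}_k}w_j=0$ and $\partial_{\overline{z}_k}\overline{w}_j=e^{-i\frac{\sigma_j}{2}\theta}\delta_{jk}$), the Wirtinger chain rule collapses to the single surviving term $e^{-i\frac{\sigma_k}{2}\theta}(\partial_{\overline{z}_k}\Phi)(\zb(\theta))=(\partial_{\overline{z}_k}\Phi)(\zb)$. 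Substituting $f_k=2\,\partial_{\overline{z}_k}\Phi$ then gives $f_k(\zb(\theta))=e^{i\frac{\sigma_k}{2}\theta}f_k(\zb)$, which is precisely the Gauge condition \eqref{gaugeCon}.

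The computations are elementary once this framework is set up; the main obstacle is essentially bookkeeping, namely carrying the Wirtinger derivatives correctly through the chain rule and performing the real/imaginary-part manipulation that converts the bracket into $\mathrm{Im}\,f_k\overline{z}_k$. The conceptual point that makes both parts work in tandem is that \ref{H4*} is exactly the differentiated (infinitesimal) form of the phase-invariance of $\mathrm{Re}\,F$, and that $f_k$ is the $\overline{z}_k$-gradient of this invariant potential, so gauge-invariance of $\Phi$ automatically upgrades to gauge-covariance of $f_k$.
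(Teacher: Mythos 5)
Your proposal is correct and follows essentially the same route as the paper: the paper's (terse) proof rests on the identity $f_{k}=2\,\partial_{\overline{z}_{k}}\mathrm{Re}\,F$ from \eqref{fksegexp} together with the chain rule for part (i), and on part (i) plus the chain rule again for part (ii), which is exactly the framework you set up. You have merely supplied the bookkeeping the paper omits, in particular making explicit that \ref{H4*} enters as the vanishing of $g'(\theta)=\frac{1}{2}\,\mathrm{Im}\sum_{k}\sigma_{k}f_{k}(\zb(\theta))\overline{z_{k}(\theta)}$.
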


\begin{proof}
    Part i) is a consequence of the fact that we can write 
    \begin{equation}\label{fksegexp}
        f_{k}(\mathbf{z})
=2\frac{\partial  }{\partial \overline{z}_{k}}\mathrm{Re} \,F(\mathbf{z})
    \end{equation}
 and the chain rule. For part ii) we use part i) and again the chain rule.
\end{proof}

The following result characterizes the potential function $F$ and establishes some relations between the nonlinearities $f_k$ and $F$.

\begin{lem}\label{estdifF} Assume that  \textnormal{\ref{H1}, \ref{H2*}, \ref{H3}} and \textnormal{\ref{H5*}} hold and let $\mathbf{u}$ be a complex-valued function defined on $\R^n$. Then,

\begin{itemize}
    \item[(i)] 
 \begin{equation}\label{estdifFeq}
\begin{split}
\left|\mathrm{Re}\,F(\mathbf{z})-\mathrm{Re}\,F(\mathbf{z}')\right|&\leq  C \sum_{m=1}^{l}\sum_{j=1}^{l}(|z_{j}|^{p}+|z_{j}'|^{p})|z_{m}-z_{m}'|.
\end{split}
\end{equation}
In particular
\begin{equation}\label{estdifFeq1}
|\mathrm{Re}\,F(\mathbf{z})|\leq C \sum_{j=1}^{l}|z_{j}|^{p+1}.
\end{equation}

\item[(ii)]\begin{equation*}
\mathrm{Re}\sum_{k=1}^{l}f_{k}(\mathbf{u})\nabla \overline{u}_{k}=\mathrm{Re}[\nabla F(\mathbf{u})].
\end{equation*}

\item[(iii)] \begin{equation*}
\mathrm{Re}\sum_{k=1}^{l}f_{k}(\mathbf{u})\overline{u}_{k}=\mathrm{Re}[(p+1)F(\mathbf{u})].
\end{equation*}

\item[(iv)]  The nonlinearities $f_{k}$, $k=1,\ldots,l$ are homogeneous  functions of degree $p$.
\end{itemize}
 \end{lem}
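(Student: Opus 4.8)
The plan is to prove the four parts of Lemma~\ref{estdifF} in sequence, exploiting the relation \eqref{fksegexp} from Lemma~\ref{H34impGC}, namely $f_k(\mathbf{z})=2\,\partial_{\overline{z}_k}\mathrm{Re}\,F(\mathbf{z})$, together with the homogeneity \ref{H5*} and the growth estimate \eqref{estdiffkeq} already established in Lemma~\ref{limfk}. For part (i), I would apply the Fundamental Theorem of Calculus to $\mathrm{Re}\,F$ along the segment joining $\mathbf{z}'$ to $\mathbf{z}$, writing the difference as an integral of the gradient of $\mathrm{Re}\,F$. Since $\partial_{z_m}\mathrm{Re}\,F$ and $\partial_{\overline{z}_m}\mathrm{Re}\,F$ are (up to constants) the functions $f_m$ and $\overline{f_m}$, the growth bound \eqref{growthp} gives $|\partial_{z_m}\mathrm{Re}\,F(\mathbf{w})|\lesssim\sum_j|w_j|^p$ on the segment; bounding $|w_j|\le|z_j|+|z_j'|$ and integrating yields \eqref{estdifFeq}. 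The special case \eqref{estdifFeq1} follows by setting $\mathbf{z}'=\mathbf{0}$ and using $F(\mathbf{0})=0$ (a consequence of \ref{H1} and homogeneity).

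For part (ii), the identity $\mathrm{Re}\sum_k f_k(\mathbf{u})\nabla\overline{u}_k=\mathrm{Re}[\nabla F(\mathbf{u})]$ is essentially the chain rule. I would compute $\nabla F(\mathbf{u})=\sum_k\bigl(\partial_{z_k}F\,\nabla u_k+\partial_{\overline{z}_k}F\,\nabla\overline{u}_k\bigr)$, take real parts, and use \ref{H3} to recognize $\partial_{\overline{z}_k}F+\overline{\partial_{z_k}F}=f_k$. Care is needed to combine the $\nabla u_k$ and $\nabla\overline{u}_k$ terms correctly: taking the real part pairs each term with its conjugate, so that $\mathrm{Re}[\partial_{z_k}F\,\nabla u_k]+\mathrm{Re}[\partial_{\overline{z}_k}F\,\nabla\overline{u}_k]=\mathrm{Re}[(\partial_{\overline{z}_k}F+\overline{\partial_{z_k}F})\nabla\overline{u}_k]=\mathrm{Re}[f_k\nabla\overline{u}_k]$, and summing over $k$ gives the claim.

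For part (iii), I would invoke Euler's identity for homogeneous functions. By \ref{H5*}, $F$ is homogeneous of degree $p+1$, so the Euler relation for functions on $\mathbb{C}^l$ reads $\sum_k\bigl(z_k\,\partial_{z_k}F+\overline{z}_k\,\partial_{\overline{z}_k}F\bigr)=(p+1)F$. Differentiating $F(\lambda\mathbf{z})=\lambda^{p+1}F(\mathbf{z})$ in $\lambda$ and setting $\lambda=1$ produces exactly this. Taking real parts and again using \ref{H3} to write $\partial_{\overline{z}_k}F+\overline{\partial_{z_k}F}=f_k$, the left side becomes $\mathrm{Re}\sum_k\overline{u}_k f_k(\mathbf{u})$, giving the desired $\mathrm{Re}\sum_k f_k(\mathbf{u})\overline{u}_k=\mathrm{Re}[(p+1)F(\mathbf{u})]$. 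Finally, part (iv) follows by differentiating the homogeneity relation of part~(iii): since $F$ is homogeneous of degree $p+1$, the relation \eqref{fksegexp} shows each $f_k=2\,\partial_{\overline{z}_k}\mathrm{Re}\,F$ is homogeneous of degree $p$, because the operator $\partial_{\overline{z}_k}$ lowers the degree of homogeneity by one. The main obstacle I anticipate is the bookkeeping with the Wirtinger derivatives in parts (ii) and (iii)—in particular justifying the passage between real/imaginary parts and the holomorphic/antiholomorphic derivatives, and confirming that the constant $(p+1)$ emerges correctly from the Euler relation rather than $p$ or $2(p+1)$; the analytic content is otherwise elementary once \eqref{fksegexp} and the homogeneity are in hand.
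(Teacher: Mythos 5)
Your proposal is correct and follows essentially the same route as the paper: the paper's (very brief) proof of part (i) likewise combines the identity $f_k=2\,\partial_{\overline{z}_k}\mathrm{Re}\,F$ from \eqref{fksegexp} with the growth estimates of Lemma \ref{limfk} and the Fundamental Theorem of Calculus, and invokes \ref{H5*} to get $\mathrm{Re}\,F(\mathbf{0})=0$. Parts (ii)--(iv) are left implicit in the paper, but the chain-rule, Euler-identity, and degree-lowering arguments you give are exactly the standard ones intended, and your bookkeeping with the Wirtinger derivatives (pairing each term with its conjugate before applying \ref{H3}) is carried out correctly.
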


 \begin{proof}
To prove part i) we use \eqref{fksegexp} combined with \eqref{estdiffkeq} and the Fundamental Theorem of
Calculus. The inequality \eqref{estdifFeq1} is a particular case of \eqref{estdifFeq}. To prove that in fact $\mathrm{Re}\;F$ vanishes in $\mathbf{0}$, we precisely use \ref{H5*}. 
 \end{proof}

\begin{obs}
\begin{enumerate}
     
 \item [i)]  Lemma \ref{estdifF} i) gives a bound of order $p+1$ to the function  $\mathrm{Re}\,F$ . This is congruent with the fact that $f_k$ in some way is the derivative of the potential function $F$ and $f_{k}$ has growth of order $p$ as we saw in \eqref{growthp}. 
\item[ii)] Properties ii) and iii) in Lemma \ref{estdifF} are important to construct  Virial-type identities.  
\item[iii)] As a consequence of Lemma \ref{estdifF} iv), the system \eqref{system1} (with $\beta_{k}=0$) is invariant by   the scaling
 \begin{equation*}
     u_{k}^{\lambda}(x,t)=\lambda^{
     \frac{2}{p-1}}u_{k}( \lambda x,\lambda^{2} t), \qquad k=1,\ldots,l.
 \end{equation*}
 This means that the Sobolev space $\dot{H}^{s_c}$ is critical, where
 
 \begin{equation}\label{critexp}
    s_c= \frac{n}{2}-\frac{2}{p-1}.
 \end{equation}
  
 Now, consider the following condition for the power $p$. 
  \begin{equation}\label{pcondt}
    \begin{cases}
1<p< \infty,& \quad if \quad  n=1,2;\\
1<p<\frac{n+2}{n-2},& \quad if \quad  n\geq 3.\\
\end{cases} 
\end{equation}
  Then, we have  we  the following regimes: we will say that system \eqref{system1} is
 \begin{equation*}
     L^{2}-
     \begin{cases}
     \mbox{subcritical}, \quad\mbox{if}\quad 1< p< 1+\frac{4}{n},\\
     \mbox{critical},\quad\mbox{if}\quad p=1+\frac{4}{n},\\
     \mbox{supercritical},\quad\mbox{if}\quad p>1+\frac{4}{n};
     \end{cases}\quad \mbox{and}
 \end{equation*}
 \begin{equation*}
       H^{1}-
     \begin{cases}
     \mbox{subcritical},\quad \mbox{if $p$ satisfies \eqref{pcondt}},   \\
     \mbox{critical},\quad\mbox{if}\quad p=\frac{n+2}{n-2},\quad  n\geq 3.\\
     \mbox{supercritical},\quad\mbox{if}\quad p> \frac{n+2}{n-2},\quad  n\geq 3.
     \end{cases}
 \end{equation*}
\end{enumerate}

\end{obs}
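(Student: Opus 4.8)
Of the three items in this remark, only (iii) carries a genuine argument: (i) simply rereads the estimate \eqref{estdifFeq1} as the assertion that $\mathrm{Re}\,F$, being a ``primitive'' of the degree-$p$ maps $f_k$, gains one power and is therefore controlled at order $p+1$, while (ii) merely flags that the identities in Lemma \ref{estdifF} (ii)--(iii) will drive the virial computations of Section \ref{sec.gsbu}. So the plan concentrates on the scaling claim, for which the homogeneity established in Lemma \ref{estdifF} (iv) is the essential input.

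To verify that $\mathbf{u}^{\lambda}=(u_1^{\lambda},\ldots,u_l^{\lambda})$ solves \eqref{system1} with $\boldsymbol{\beta}=\boldsymbol{0}$ whenever $\mathbf{u}$ does, I would write $a=\frac{2}{p-1}$ and differentiate directly. By the chain rule $\partial_t u_k^{\lambda}(x,t)=\lambda^{a+2}(\partial_t u_k)(\lambda x,\lambda^2 t)$ and $\Delta u_k^{\lambda}(x,t)=\lambda^{a+2}(\Delta u_k)(\lambda x,\lambda^2 t)$, so the linear part of the $k$-th equation evaluated at $\mathbf{u}^{\lambda}$ equals $\lambda^{a+2}$ times the linear part of the original equation evaluated at $(\lambda x,\lambda^2 t)$; since $\mathbf{u}$ solves \eqref{system1}, this equals $-\lambda^{a+2}f_k(\mathbf{u}(\lambda x,\lambda^2 t))$. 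On the other hand, Lemma \ref{estdifF} (iv) gives that each $f_k$ is homogeneous of degree $p$, whence
\[
f_k(\mathbf{u}^{\lambda}(x,t))=f_k\bigl(\lambda^{a}\mathbf{u}(\lambda x,\lambda^2 t)\bigr)=\lambda^{ap}\,f_k(\mathbf{u}(\lambda x,\lambda^2 t)).
\]
Matching the two expressions forces $a+2=ap$, i.e. $a(p-1)=2$, i.e. $a=\frac{2}{p-1}$; thus the rescaling is an invariance precisely for the prefactor $\lambda^{2/(p-1)}$ displayed in the statement. The degree-$p$ homogeneity is the crux here: without the \emph{exact} exponent $p$ the two powers of $\lambda$ could not be reconciled, and it is through Lemma \ref{estdifF} (iv) that assumptions \ref{H1}, \ref{H2*}, \ref{H3} and \ref{H5*} enter.

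Finally, to identify the critical index I would test which homogeneous norm $\dot{H}^{s}$ is preserved by the initial-data rescaling $u_k^{\lambda}(\cdot,0)=\lambda^{a}u_k(\lambda\,\cdot\,,0)$. A change of variables on the Fourier side yields
\[
\|u_k^{\lambda}(\cdot,0)\|_{\dot{H}^{s}}=\lambda^{\,a-\frac{n}{2}+s}\,\|u_k(\cdot,0)\|_{\dot{H}^{s}},
\]
so the norm is scale-invariant if and only if $s=\frac{n}{2}-a=\frac{n}{2}-\frac{2}{p-1}$, which is exactly $s_c$ as in \eqref{critexp}. This singles out $\dot{H}^{s_c}$ as the critical space and in particular recovers the thresholds $p=1+\frac{4}{n}$ (where $s_c=0$) and $p=\frac{n+2}{n-2}$ (where $s_c=1$) that organize the $L^2$- and $H^1$-regimes listed immediately after the remark. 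Both computations are routine; the only delicate point is the bookkeeping of the $\lambda$-powers in the first step, and that is precisely what the homogeneity lemma resolves.
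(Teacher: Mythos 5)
Your proposal is correct and follows exactly the route the paper intends: the remark's only substantive claim (item iii) is verified by the chain-rule computation on the linear part together with the degree-$p$ homogeneity of the $f_k$ from Lemma \ref{estdifF} (iv), which forces the exponent $a=\frac{2}{p-1}$, and the identification of $s_c$ via the scaling of the $\dot{H}^s$ norm is the standard bookkeeping the paper takes for granted. Nothing is missing, and your treatment of items (i) and (ii) as commentary rather than claims requiring proof is the right reading.
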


The following result said that if we restrict  the values to the real ones, then the relation between the nonlinearities $f_k$ and the potential function $F$ given by \ref{H3} (see also \eqref{fksegexp}). 

\begin{lem}\label{fkreal}
	If  $F$ satisfies \textnormal{\ref{H7}} then 
	\begin{equation*}\label{Cond4.2}
	f_{k}(\mathbf{x})=\frac{\partial F}{\partial x_{k}}(\mathbf{x}), \qquad \forall \mathbf{x}\in \R^{l}.
	\end{equation*}
	In addition,   $F$ is positive on the positive cone of $\R^l$.
\end{lem}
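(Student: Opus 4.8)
The plan is to work on the real slice $\R^{l}\subset\C^{l}$ and to read off everything from the identity \eqref{fksegexp}, $f_{k}(\mathbf z)=2\frac{\partial}{\partial\overline z_{k}}\mathrm{Re}\,F(\mathbf z)$, which already encodes \ref{H3}. Writing $z_{m}=x_{m}+iy_{m}$ and abbreviating $G:=\mathrm{Re}\,F$ (a \emph{globally} real-valued function on $\C^{l}\cong\R^{2l}$), the definition of the Wirtinger operator gives
\begin{equation*}
f_{k}=2\frac{\partial G}{\partial\overline z_{k}}=\frac{\partial G}{\partial x_{k}}+i\frac{\partial G}{\partial y_{k}},
\end{equation*}
so that $\mathrm{Re}\,f_{k}=\partial_{x_{k}}G$ and $\mathrm{Im}\,f_{k}=\partial_{y_{k}}G$ pointwise. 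Now restrict to a real point $\mathbf x\in\R^{l}$. By the first part of \ref{H7}, $F$ is real along $\R^{l}$, hence $G=F$ there and differentiating in the tangential direction $x_{k}$ gives $\partial_{x_{k}}G(\mathbf x)=\partial_{x_{k}}F(\mathbf x)\in\R$. Thus the real part of $f_{k}(\mathbf x)$ already equals $\partial F/\partial x_{k}(\mathbf x)$, and the identity reduces to the single claim $\partial_{y_{k}}G(\mathbf x)=0$, i.e.\ that $f_{k}$ is real-valued on all of $\R^{l}$.

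The vanishing of this normal derivative $\partial_{y_{k}}G$ along $\R^{l}$ is the heart of the matter. The second part of \ref{H7} delivers it for free on the \emph{open} positive cone $\{x_{i}>0\}$: there $f_{k}\ge0$, so $\mathrm{Im}\,f_{k}=\partial_{y_{k}}G=0$. To propagate this to all of $\R^{l}$ I would invoke the analytic structure of the nonlinearity, via the identity theorem: $\mathrm{Im}\,f_{k}$ is real-analytic and vanishes on an open subset of the connected set $\R^{l}$, hence vanishes identically there. (For the explicit admissible nonlinearities one may instead verify the reflection symmetry $G(\overline{\mathbf z})=G(\mathbf z)$ directly, which makes $y_{k}\mapsto G$ even and therefore stationary at $y_{k}=0$.) I expect this propagation step to be the main obstacle, since the bare hypotheses \ref{H1}--\ref{H8} only guarantee reality of $f_{k}$ on the positive cone; converting that into reality on the whole of $\R^{l}$ is precisely where the (real-)analytic/homogeneous structure of $F$ must be used, and it should be stated as a standing regularity property of the class of nonlinearities considered.

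With $f_{k}=\partial F/\partial x_{k}$ on $\R^{l}$ in hand, positivity of $F$ on the positive cone follows from Euler's relation. Specializing Lemma \ref{estdifF}(iii) to a real argument $\mathbf x\in\R^{l}$ and using the identity just proved together with the reality of $F$, both occurrences of $\mathrm{Re}$ become redundant, leaving
\begin{equation*}
(p+1)F(\mathbf x)=\sum_{k=1}^{l}x_{k}\,\frac{\partial F}{\partial x_{k}}(\mathbf x)=\sum_{k=1}^{l}x_{k}\,f_{k}(\mathbf x).
\end{equation*}
On the positive cone each $x_{k}\ge0$ and, by \ref{H7}, each $f_{k}(\mathbf x)\ge0$, so the right-hand side is non-negative and $F(\mathbf x)\ge0$; strict positivity at points $\mathbf x\neq\mathbf 0$ follows as soon as the nonlinearity is nondegenerate, i.e.\ not all $f_{k}$ vanish there. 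This completes the plan.
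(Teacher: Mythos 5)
Your proposal follows the same route as the paper: the identity is read off from \eqref{fksegexp}, and the positivity on the positive cone from the Euler relation of Lemma \ref{estdifF} (iii) (the paper's citation of ``Lemma \ref{estdifFeq1} (iii)'' in its proof is evidently a slip for that item). For the second part your argument is exactly the paper's, and your remark that the Euler identity $(p+1)F(\mathbf x)=\sum_k x_k f_k(\mathbf x)$ only yields $F\geq 0$, with strict positivity requiring non-degeneracy of the $f_k$, is an accurate reading of what that relation actually delivers.

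The substantive difference is in the first part, which the paper dismisses as ``clear from \eqref{fksegexp}''. You correctly isolate the non-trivial content: writing $G=\mathrm{Re}\,F$, one has $f_k=\partial_{x_k}G+i\,\partial_{y_k}G$, so the asserted identity on $\R^l$ is equivalent to the vanishing of the normal derivative $\partial_{y_k}G$ along the real slice, i.e.\ to $f_k$ being real-valued on all of $\R^l$ --- a fact the paper uses immediately afterwards to make sense of \eqref{systemelip}. Assumption \ref{H7} only gives $\mathrm{Im}\,f_k=0$ on the positive cone, and your proposed propagation to the rest of $\R^l$ by the identity theorem presupposes real-analyticity, which is not among \ref{H1}--\ref{H8} (assumption \ref{H2*} controls first derivatives only in a H\"older-type sense). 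You flag this honestly, and your parenthetical alternative --- the reflection symmetry $F(\overline{\mathbf z})=\overline{F(\mathbf z)}$, which makes $G$ even under $\mathbf y\mapsto -\mathbf y$ and hence forces $\partial_{y_k}G=0$ on $\R^l$ --- is in fact the property satisfied by both model potentials \eqref{Fsyst2} and \eqref{Fsyst3}, and is the cleanest way to close the gap; it would have to be added as a standing assumption, or else the lemma restricted to the positive cone, which is all the ground-state construction of Section \ref{sec.gs} actually needs. The paper's own proof does not address this point, so your attempt is not weaker than the original; it makes visible a hypothesis the original leaves implicit.
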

\begin{proof}
	The first part is clear from  \eqref{fksegexp}. For the second part, we use Lemma \ref{estdifFeq1} (iii).
\end{proof}

\subsection{Technical lemmas}
Now we establish some technical lemmas needed to extend the local solutions to global ones and also prove blow-up solutions in the case of radial data. 

\begin{lem}\label{StraussLemaconsequence}
If $f\in H^{1}(\R^{n})$ is a radially symmetric function, then
\begin{equation*}
\|f\|^{p+1}_{L^{p+1}(|x|\geq R)}\leq \frac{C}{R^{(n-1)(p-1)/2}}\|f\|^{(p+3)/2}_{L^{2}(|x|\geq R)}\|\nabla f\|^{(p-1)/2}_{L^{2}(|x|\geq R)}.
\end{equation*}
\end{lem}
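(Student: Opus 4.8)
The plan is to reduce the estimate to the classical pointwise Strauss decay bound for radial functions, and then to interpolate it against the $L^2$ mass on the exterior region $\{|x|\geq R\}$. Writing $f(x)=f(r)$ with $r=|x|$ and using that a radial $H^{1}$ function decays to zero at infinity (which I would justify by approximating $f$ with smooth compactly supported radial functions), the fundamental theorem of calculus gives, for $r\geq R$,
$$
f(r)^2=-2\int_r^\infty f(s)\,f'(s)\,ds\leq 2\int_r^\infty |f(s)|\,|f'(s)|\,ds.
$$
Inserting the weight $s^{n-1}$ and using $s^{-(n-1)}\leq r^{-(n-1)}$ for $s\geq r$, followed by Cauchy--Schwarz and enlarging the domain of integration from $\{s\geq r\}$ to $\{s\geq R\}$, I obtain
$$
f(r)^2\leq 2\,r^{-(n-1)}\int_r^\infty s^{n-1}|f||f'|\,ds\leq C\,r^{-(n-1)}\Big(\int_R^\infty s^{n-1}f^2\,ds\Big)^{1/2}\Big(\int_R^\infty s^{n-1}(f')^2\,ds\Big)^{1/2}.
$$
Since for a radial function $|\nabla f|=|f'(r)|$ and the angular integration contributes only a fixed constant (the surface area of the unit sphere, absorbed into $C$), the two weighted integrals are comparable to $\|f\|^2_{L^{2}(|x|\geq R)}$ and $\|\nabla f\|^2_{L^{2}(|x|\geq R)}$, which yields the pointwise bound
$$
|f(r)|\leq C\,r^{-(n-1)/2}\|f\|^{1/2}_{L^{2}(|x|\geq R)}\|\nabla f\|^{1/2}_{L^{2}(|x|\geq R)},\qquad r\geq R.
$$

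Next I would split the power as $|f|^{p+1}=|f|^{p-1}\,|f|^2$ and estimate the factor $|f|^{p-1}$ by the pointwise bound above. Because $r\geq R$, the resulting weight $r^{-(n-1)(p-1)/2}$ is controlled by $R^{-(n-1)(p-1)/2}$, so it factors out of the integral:
$$
\|f\|^{p+1}_{L^{p+1}(|x|\geq R)}\leq \frac{C}{R^{(n-1)(p-1)/2}}\,\|f\|^{(p-1)/2}_{L^{2}(|x|\geq R)}\|\nabla f\|^{(p-1)/2}_{L^{2}(|x|\geq R)}\int_{|x|\geq R}|f|^2\,dx.
$$
The remaining integral is exactly $\|f\|^2_{L^{2}(|x|\geq R)}$, and collecting the exponents of the mass term gives $(p-1)/2+2=(p+3)/2$, which is precisely the claimed inequality.

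The computation is essentially routine; the only point requiring care is the decay hypothesis $f(s)\to 0$ as $s\to\infty$ needed to run the fundamental theorem of calculus, together with keeping track of the domain of integration so that the exterior $L^{2}$ norms (rather than the full-space norms) appear and the factor $R^{-(n-1)(p-1)/2}$ is genuinely extracted from the constraint $s\geq r\geq R$. I expect no serious obstacle beyond this bookkeeping.
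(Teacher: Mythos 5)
Your argument is correct and is exactly the standard derivation behind the citation the paper gives: the paper simply invokes Strauss' radial lemma (via Ogawa--Tsutsumi), and your proof reconstructs that lemma in its exterior form and then interpolates via the split $|f|^{p+1}=|f|^{p-1}|f|^{2}$, with the exponent bookkeeping $(p-1)/2+2=(p+3)/2$ checking out. The only cosmetic remark is that for complex-valued $f$ one should write $|f(r)|^{2}=-2\int_{r}^{\infty}\mathrm{Re}\bigl(\overline{f}\,f'\bigr)\,ds$ before passing to absolute values, which changes nothing.
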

\begin{proof}

The proof is a consequence of  Strauss' radial lemma (see also \cite[page 323]{Ogawa}).
\end{proof}

\begin{lem}\label{supercritcalcase}
Let $I$ an open interval with $0\in I$. Let $a\in \R$, $b>0$ and $q>1$. Define $\gamma=(bq)^{-\frac{1}{q-1}}$ and $f(r)=a-r+br^{q}$, for $r\geq 0$. Let $G(t)$ a non-negative continuous  function such that $f\circ G\geq 0$ on $I$. Assume that $a<\left(1-\frac{1}{q}\right)\gamma$.
\begin{enumerate}
\item[(i)] If $G(0)<\gamma$, then $G(t)<\gamma$, $\forall t\in I$.
\item[(ii)] If $G(0)>\gamma$, then $G(t)>\gamma$, $\forall t\in I$.
\end{enumerate}
\end{lem}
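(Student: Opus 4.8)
The plan is to exploit the fact that the assumption $a<\left(1-\frac1q\right)\gamma$ is precisely the condition that forces the value of $f$ at its minimizer $\gamma$ to be strictly negative, after which everything follows from a continuity (intermediate value) argument. First I would locate the critical point of $f$: since $f'(r)=-1+bqr^{q-1}$, the equation $f'(r)=0$ gives $r^{q-1}=(bq)^{-1}$, i.e. $r=\gamma$, and because $f''(r)=bq(q-1)r^{q-2}>0$ for $r>0$ (recall $q>1$), the point $\gamma$ is the unique minimizer of $f$ on $[0,\infty)$. The one genuinely computational step is evaluating $f(\gamma)$. Using $\gamma^{q-1}=(bq)^{-1}$ one gets $b\gamma^{q}=b\gamma\cdot\gamma^{q-1}=\gamma/q$, hence
\begin{equation*}
f(\gamma)=a-\gamma+b\gamma^{q}=a-\gamma+\frac{\gamma}{q}=a-\left(1-\frac1q\right)\gamma.
\end{equation*}
The hypothesis $a<\left(1-\frac1q\right)\gamma$ then yields $f(\gamma)<0$.

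The second step is the trapping argument. I would observe that $G$ can never take the value $\gamma$: if $G(t_0)=\gamma$ for some $t_0\in I$, then $f(G(t_0))=f(\gamma)<0$, contradicting the standing hypothesis $f\circ G\geq0$ on $I$. Thus $G(t)\neq\gamma$ for every $t\in I$. Since $G$ is continuous and $I$ is an interval (hence connected), the intermediate value theorem forbids $G$ from crossing the level $\gamma$. Therefore $G(I)$ lies entirely in one of the two sets $\{r\geq0: r<\gamma\}$ or $\{r>\gamma\}$, and which one is determined by the value $G(0)$. In case (i), $G(0)<\gamma$ forces $G(t)<\gamma$ for all $t\in I$; in case (ii), $G(0)>\gamma$ forces $G(t)>\gamma$ for all $t\in I$. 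This proves both assertions simultaneously.

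I do not expect a serious obstacle here: the only mildly delicate point is the algebraic identity $b\gamma^{q}=\gamma/q$ that converts the abstract threshold $a<\left(1-\frac1q\right)\gamma$ into the geometric statement $f(\gamma)<0$, and the rest is the standard continuity/bootstrap mechanism. For context, convexity of $f$ on $(0,\infty)$ together with $f(\gamma)<0$ shows that the ``forbidden'' set $\{f<0\}$ is an open interval containing $\gamma$ which separates the two admissible branches of $\{f\geq0\}$; this clarifies the geometry, but the bare argument needs only $f(\gamma)<0$ and the intermediate value theorem, so I would keep the write-up to those two ingredients.
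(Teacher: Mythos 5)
Your proof is correct and complete: locating the unique minimizer $\gamma$ of $f$ on $[0,\infty)$, converting the hypothesis $a<\left(1-\frac{1}{q}\right)\gamma$ into $f(\gamma)<0$ via the identity $b\gamma^{q}=\gamma/q$, and then trapping $G$ on one side of $\gamma$ by continuity and the intermediate value theorem. The paper does not write out a proof of this lemma---it only cites \cite{beg}, \cite{Esfahani} and \cite{Pastor}---and your argument is precisely the standard one given in those references, so there is nothing to add.
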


\begin{proof}    The proof can be founded  for instance, in \cite{beg}, \cite{Esfahani} or \cite{Pastor}.
\end{proof}

A  slightly modification of part (ii) in Lemma \ref{supercritcalcase} is the following. 

\begin{lem}\label{corosupercritcalcase}
	Let $I$ an open interval with $0\in I$. Let $a\in \R$, $b>0$ and $q>1$. Define $\gamma=(bq)^{-\frac{1}{q-1}}$ and $f(r)=a-r+br^{q}$, for $r\geq 0$. Let $G(t)$ a non-negative continuous  function such that $f\circ G\geq 0$ on $I$. Assume that $a<(1-\delta_{1})\left(1-\frac{1}{q}\right)\gamma$, for some small $\delta_{1}>0$.
	
	If $G(0)>\gamma$, then there exists $\delta_{2}=\delta_{2}(\delta_{1})>0$ such that $G(t)>(1+\delta_{2})\gamma$, $\forall t\in I$.
\end{lem}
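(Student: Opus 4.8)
The plan is to reduce the claim to the location of the larger root of $f$ and then to an elementary one-variable estimate; the shape of $f$ is exactly the one used in the proof of Lemma~\ref{supercritcalcase}. Since $f'(r)=-1+bq\,r^{q-1}$ vanishes on $(0,\infty)$ only at $r=(bq)^{-1/(q-1)}=\gamma$, the point $\gamma$ is the unique minimizer of $f$ on $[0,\infty)$. Using $b\gamma^{q-1}=1/q$, hence $b\gamma^{q}=\gamma/q$, one gets
\[
f(\gamma)=a-\gamma+b\gamma^{q}=a-\Big(1-\tfrac1q\Big)\gamma .
\]
The hypothesis $a<(1-\delta_{1})\big(1-\tfrac1q\big)\gamma<\big(1-\tfrac1q\big)\gamma$ then gives $f(\gamma)<0$. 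Since $f$ is strictly increasing on $[\gamma,\infty)$ with $f(r)\to\infty$, it has a unique zero $r_{2}\in(\gamma,\infty)$, with $f<0$ on $[\gamma,r_{2})$ and $f\ge0$ on $[r_{2},\infty)$.

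Next I would locate $G$. As $f\circ G\ge0$ on $I$, the function $G$ never takes a value where $f<0$, in particular it avoids the band $[\gamma,r_{2})$. Since $G(0)>\gamma$ and $f(G(0))\ge0$, this forces $G(0)\ge r_{2}$; and because $G$ is continuous and $I$ is connected, the image $G(I)$ is an interval contained in $\{f\ge0\}$ and containing $G(0)\ge r_{2}$, so it cannot cross the band where $f<0$. Hence $G(t)\ge r_{2}$ for every $t\in I$. (This refines part (ii) of Lemma~\ref{supercritcalcase}, which only yields $G(t)>\gamma$.)

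The heart of the matter is to show that the strengthened hypothesis forces $r_{2}$ to exceed $\gamma$ by a definite factor depending only on $\delta_{1}$. Evaluating $f$ at $(1+\delta_{2})\gamma$ and again using $b\gamma^{q}=\gamma/q$ yields
\[
f\big((1+\delta_{2})\gamma\big)=a+\gamma\,g(\delta_{2}),\qquad g(\delta_{2}):=\tfrac1q(1+\delta_{2})^{q}-(1+\delta_{2}).
\]
The function $g$ is continuous with $-g(0)=1-\tfrac1q$, so $-g(\delta_{2})\to1-\tfrac1q$ as $\delta_{2}\to0^{+}$. Since $(1-\delta_{1})\big(1-\tfrac1q\big)<1-\tfrac1q$, there is $\delta_{2}=\delta_{2}(\delta_{1})>0$ (depending also on the fixed exponent $q$) with $-g(\delta_{2})\ge(1-\delta_{1})\big(1-\tfrac1q\big)$. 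For such $\delta_{2}$,
\[
f\big((1+\delta_{2})\gamma\big)=a+\gamma g(\delta_{2})\le a-(1-\delta_{1})\Big(1-\tfrac1q\Big)\gamma<0 ,
\]
so that $(1+\delta_{2})\gamma<r_{2}$, because $f\ge0$ on $[r_{2},\infty)$. Combining with the previous step gives $(1+\delta_{2})\gamma<r_{2}\le G(t)$ for all $t\in I$, which is exactly the assertion. The only genuinely new ingredient compared to Lemma~\ref{supercritcalcase} is this quantification of $\delta_{2}$ in terms of $\delta_{1}$, i.e. the continuity/monotonicity argument for $g$, which I expect to be the main (albeit routine) obstacle.
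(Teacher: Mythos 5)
Your proof is correct. The paper gives no argument of its own here, deferring entirely to \cite[Corollary 3.2]{Pastor}; your write-up is essentially the standard argument found there and in the proof of Lemma \ref{supercritcalcase} — locate the larger root $r_{2}>\gamma$ of $f$, use continuity of $G$ and $f\circ G\ge 0$ to trap $G(I)$ in $[r_{2},\infty)$, and then show that the strengthened hypothesis $a<(1-\delta_{1})\left(1-\frac1q\right)\gamma$ forces $f\bigl((1+\delta_{2})\gamma\bigr)<0$, hence $(1+\delta_{2})\gamma<r_{2}$, for a suitable $\delta_{2}(\delta_{1},q)>0$.
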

\begin{proof}
See \cite[Corollary 3.2]{Pastor}.
\end{proof}

\begin{lem}\label{lemafunctionchi} Let  $r=|x|$, $x\in \R^{n}$. Take $\chi$ to be a smooth function with
\begin{equation*}
 \chi(r)=\left\{\begin{array}{cc}
r^{2},&0\leq r\leq 1\\
c,& r\geq 3,
\end{array}\right.
\end{equation*}
where $c$ is a positive constant and $\chi''(r)\leq 2$, for any $ r\geq 0$. Let $\chi_{R}(r)=R^{2}\chi\left(r/R\right)$.
 \begin{enumerate}
 \item If $r\leq R$, then $\Delta\chi_{R}(r)=2n$ and $\Delta^{2}\chi_{R}(r)=0$.
 \item If $r\geq R$, then
 \begin{equation*}
 \Delta\chi_{R}(r)\leq C_{1},\;\;\;\; \;\;\;\;\;\;\Delta^{2}\chi_{R}(r)\leq \frac{C_{2}}{R^{2}},
 \end{equation*}
 where $C_{1},C_{2}$ are constant depending on $n$.
 \end{enumerate}
\end{lem}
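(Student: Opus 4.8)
The plan is to reduce everything to the one–variable radial Laplacian and to exploit the exact scaling of $\chi_R$. Writing $s=r/R$, the definition $\chi_R(r)=R^2\chi(r/R)$ gives $\chi_R'(r)=R\,\chi'(s)$ and $\chi_R''(r)=\chi''(s)$, so that for a radial function the formula $\Delta g=g''+\frac{n-1}{r}g'$ yields the scale–invariant identity
\[
\Delta\chi_R(r)=\chi''(s)+\frac{n-1}{s}\chi'(s)=(\Delta\chi)(s),\qquad s=\tfrac{r}{R}.
\]
Iterating this (or applying the general rule $\Delta[f(\cdot/R)](x)=R^{-2}(\Delta f)(x/R)$ to $f=\Delta\chi$) gives $\Delta^2\chi_R(r)=R^{-2}(\Delta^2\chi)(s)$. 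Thus the whole lemma becomes a statement about the \emph{fixed} profiles $\Delta\chi$ and $\Delta^2\chi$ evaluated at the rescaled radius $s$, with all $R$–dependence made explicit.

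For part (1), if $r\le R$ then $s\le 1$ and $\chi(s)=s^2$, so $\chi_R(r)=R^2(r/R)^2=r^2=|x|^2$. A direct computation gives $\Delta|x|^2=2n$ and $\Delta^2|x|^2=\Delta(2n)=0$, which is exactly the claim. Equivalently one evaluates the identity above with $\chi(s)=s^2$, $\chi'(s)=2s$, $\chi''(s)=2$.

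For part (2), $r\ge R$ means $s\ge 1$, so I examine $(\Delta\chi)(s)$ and $(\Delta^2\chi)(s)$ for $s\ge1$. On the outer region $s\ge3$ the profile $\chi$ equals the constant $c$, hence $\Delta\chi=\Delta^2\chi=0$. On the compact transition annulus $1\le s\le 3$ the hypotheses give control: from $\chi''\le2$, the crude bound $\frac{n-1}{s}\le n-1$ (valid since $s\ge1$), and the continuity of the smooth function $\chi'$ on $[1,3]$, I obtain $(\Delta\chi)(s)\le 2+(n-1)\sup_{[1,3]}|\chi'|=:C_1$; since $\chi$ is smooth, $\Delta^2\chi$ is continuous, hence bounded on $[1,3]$ by some $C_2$, and it vanishes for $s\ge3$. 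Feeding these into the two scaling identities yields $\Delta\chi_R(r)\le C_1$ and $\Delta^2\chi_R(r)\le C_2R^{-2}$, with $C_1,C_2$ depending only on $n$ and the fixed profile $\chi$.

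The only genuine work is the transition region $1\le s\le3$: there $\chi$ has no closed form, so the bounds must come from smoothness (continuity of $\chi'$, $\chi''$, and of the fourth–order quantity $\Delta^2\chi$ on a compact set) rather than from an explicit formula, and the hypothesis $\chi''\le 2$ is precisely what keeps the upper bound on $\Delta\chi_R$ in the stated shape. I expect no difficulty beyond bookkeeping, since all constants are harmless on the compact annulus and the scaling makes the powers of $R$ automatic.
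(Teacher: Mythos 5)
Your proposal is correct and is exactly the ``straightforward calculation'' that the paper invokes without writing out: the scaling identities $\Delta\chi_R(r)=(\Delta\chi)(r/R)$ and $\Delta^2\chi_R(r)=R^{-2}(\Delta^2\chi)(r/R)$, the explicit computation on $s\le1$, and compactness of the transition annulus $1\le s\le 3$ together with $\chi''\le2$ for the bounds. No discrepancy with the paper's (omitted) argument.
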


\begin{proof}
The lemma follows by a straightforward calculation.
\end{proof}

 \section{Local and global well-posedness}\label{sec.lgt}

The local and global well-posedness of  system \eqref{system1} in  $L^{2}$  and   $H^{1}$ is obtained by a contraction argument using Strichartz's estimates. First, we study  (\ref{system1}) in $L^2$ with $1< p\leq 1+ \frac{4}{n}$. To do so, we need to define the spaces

\begin{equation*}
X(I)=\begin{cases}
(\mathcal{C}\cap L^{\infty})(I; L^2)\cap L^{q}(I;L^{p+1}),\qquad 1< p<1+\frac{4}{n},\quad q=\frac{4(p+1)}{n(p-1)};\\
(\mathcal{C}\cap L^{\infty})(I; L^2)\cap L^{p+1}(I;L^{p+1}), \qquad p= 1+\frac{4}{n},\\
(\mathcal{C}\cap L^{\infty})(I; L^2)\cap L^{q}(I;L^{p+1}),\qquad  p>1+\frac{4}{n},\quad q=\frac{4(p+1)}{n(p-1)}
\end{cases}
\end{equation*}
for some time interval $I=[-T,T]$ with $T>0$. The norm in $X(I)$ is defined as
\begin{equation*}
\|f\|_{X(I)}=\begin{cases}
\|f\|_{L^{\infty}( L^2)}+ \|f\|_{L^{q}( L^{p+1})},\qquad 1< p< 1+\frac{4}{n},\quad q=\frac{4(p+1)}{n(p-1)};\\
\|f\|_{L^{\infty}( L^2)}+ \|f\|_{L^{p+1}( L^{p+1})}, \qquad p= 1+ \frac{4}{n};\\
\|f\|_{L^{\infty}( L^2)}+ \|f\|_{L^{q}( L^{p+1})},\qquad p> 1+\frac{4}{n},\quad q=\frac{4(p+1)}{n(p-1)}.
\end{cases}
\end{equation*}

Next, we define
\begin{equation*}
    \theta(n,p):=\frac{4-n(p-1)}{4}, \qquad \mbox{if}\quad 1< p\leq 1+\frac{4}{n}
\end{equation*}
and  the operator
$$\Gamma(\mathbf{u})=(\Phi_{1}(\mathbf{u}),\ldots,\Phi_{l}(\mathbf{u})),$$
where 
\begin{equation*}
\Phi_{k}(\mathbf{u})(t)=\displaystyle U_{k}(t)u_{k0}+i\int_{0}^{t}U_{k}(t-t') \frac{1}{\alpha_{k}}f_{k}(\mathbf{u})\;dt',
\end{equation*}
and $U_{k}(t)$ denotes the Schr\"odinger evolution group defined by $\displaystyle U_{k}(t)=e^{i\frac{t}{\alpha_{k}}(\gamma_{k}\Delta-\beta_{k})}$, $ k=1\ldots, l.$

From Strichartz's estimates, Lemma \ref{limfk} i), H\"{o}lder inequality and the definition of $X(I)$  we have for $\mathbf{u},\mathbf{u}' \in  \mathbf{X}(I)$

\begin{equation*}
\begin{split}
\left\|\int_{0}^{t}U_{k}(t-t')\frac{1}{\alpha_{k}}[ f_{k}(\mathbf{u})-f_{k}(\mathbf{u}')]\;dt'\right\|_{X(I)}&\leq CT^{\theta(n,p)}\left(\|\mathbf{u}\|^{p-1}_{\mathbf{X}(I)}+\|\mathbf{u}'\|^{p-1}_{\mathbf{X}(I)}\right)\|\mathbf{u}-\mathbf{u}'\|_{\mathbf{X}(I)},
\end{split}
\end{equation*}
for some time interval $I$.

Then we can prove existence of local solutions in the case $1<p<1+\frac{4}{n}$ by  the contraction mapping principle applied on the closed ball
	$$B(T,a)=\left\{\mathbf{u}\in \mathbf{X}(I):\|\mathbf{u}\|_{\mathbf{X}(I)}:=\sum_{j=1}^{l}\|u_{j}\|_{X(I)}\leq a \right\}.$$
	In fact, for $T=T(\rho)\approx \rho^{\frac{4(p-1)}{n(p-1)-4}}$, $\Gamma:B(T,a)\to B(T,a)$ is well defined and  a contraction.\\
	
	The proof for the critical $L^2$ case, $p=1+\frac{4}{n}$ follows a similar idea, but we have to apply the contraction mapping principle on the closed  ball	
$$
\tilde{B}(T,a)=\left\{\mathbf{u}\in \mathbf{{L}}^{p+1}(I;\mathbf{L}^{p+1}):\|\mathbf{u}\|_{ \mathbf{{L}}^{p+1}(I;\mathbf{L}^{p+1})}:=\sum_{j=1}^{l}\|u_{j}\|_{L^{p+1}(I;L^{p+1})}\leq a \right\}.
$$

Finally, with these facts, we have the following results related to the existence of local $L^2$-solutions in the subcritical and critical cases.

 \begin{teore}\label{localexistenceL2} Let $1< p< 1+\frac{4}{n}$. Assume that   \textnormal{\ref{H1}} and \textnormal{\ref{H2*}} hold. Then for any $\rho>0$ there exists $T=T(\rho)>0$ such that for any $\mathbf{u}_0\in \mathbf{L}^2 $ with $\|\mathbf{u}_0\|_{\mathbf{L}^2}\leq \rho$, system \eqref{system1}
 has a unique   solution $\mathbf{u}\in \mathbf{X}(I)$ with $I=[-T,T]$.
\end{teore}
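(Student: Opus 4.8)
The plan is to realize the solution as a fixed point of the Duhamel operator $\Gamma$ and to apply the Banach contraction mapping principle on the closed ball $B(T,a)$, exactly as announced in the discussion preceding the statement. First I recall that, by Duhamel's formula, a function $\mathbf{u}\in\mathbf{X}(I)$ solves \eqref{system1} on $I=[-T,T]$ if and only if $\mathbf{u}=\Gamma(\mathbf{u})$; thus it suffices to produce a unique fixed point of $\Gamma$ in $B(T,a)$ for a suitable radius $a$ and a time $T=T(\rho)$ depending only on the bound $\rho$.

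For the self-mapping property I would estimate $\|\Gamma(\mathbf{u})\|_{\mathbf{X}(I)}$ for $\mathbf{u}\in B(T,a)$ by splitting into the linear and the Duhamel parts. The homogeneous Strichartz estimate controls the free evolution, $\|U_k(t)u_{k0}\|_{X(I)}\leq C\|u_{k0}\|_{L^2}$, so the linear contribution is bounded by $C\|\mathbf{u}_0\|_{\mathbf{L}^2}\leq C\rho$. For the nonlinear term I would invoke the estimate displayed just before the statement with $\mathbf{u}'=\mathbf{0}$; since $f_k(\mathbf{0})=0$ by \ref{H1} and the underlying nonlinear bound \eqref{estdiffkeq} of Lemma \ref{limfk}(i) is available, this yields a control of the form $CT^{\theta(n,p)}\|\mathbf{u}\|^{p}_{\mathbf{X}(I)}\leq CT^{\theta(n,p)}a^{p}$. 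Choosing $a=2C\rho$ absorbs the linear part into $a/2$, and then requiring $CT^{\theta(n,p)}a^{p}\leq a/2$ fixes the admissible time: since $1<p<1+\tfrac{4}{n}$ forces $\theta(n,p)=\tfrac{4-n(p-1)}{4}>0$, one may solve for $T$, obtaining precisely $T=T(\rho)\approx\rho^{4(p-1)/(n(p-1)-4)}$, which depends only on $\rho$. Hence $\Gamma$ maps $B(T,a)$ into itself.

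For the contraction estimate I would apply the same displayed inequality to two elements $\mathbf{u},\mathbf{u}'\in B(T,a)$, giving
\[
\|\Gamma(\mathbf{u})-\Gamma(\mathbf{u}')\|_{\mathbf{X}(I)}\leq 2CT^{\theta(n,p)}a^{p-1}\|\mathbf{u}-\mathbf{u}'\|_{\mathbf{X}(I)}.
\]
Shrinking $T$ if necessary so that $2CT^{\theta(n,p)}a^{p-1}\leq\tfrac{1}{2}$ makes $\Gamma$ a strict contraction; here the positivity of $\theta(n,p)$ is again decisive, as the factor $T^{\theta(n,p)}$ is what produces the needed smallness for short times. The contraction principle then yields a unique fixed point $\mathbf{u}\in B(T,a)$, i.e.\ a solution in $\mathbf{X}(I)$. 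To promote uniqueness from the ball to all of $\mathbf{X}(I)$, I would run the standard argument: two solutions with the same data both have small $L^{q}(L^{p+1})$ norm on a sufficiently short subinterval, so the Lipschitz estimate applied there forces them to coincide, and a continuity/bootstrap argument propagates the equality to the whole of $I$.

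The computations are routine once the displayed nonlinear Strichartz estimate is granted; the only genuinely delicate bookkeeping is keeping $T$ dependent on $\rho$ alone and not on the particular $\mathbf{u}_0$, which is exactly what makes the statement uniform over the ball $\|\mathbf{u}_0\|_{\mathbf{L}^2}\leq\rho$. I expect the main conceptual obstacle to lie in the verification of the displayed nonlinear estimate itself --- combining Strichartz's inequalities, the bound \eqref{estdiffkeq} of Lemma \ref{limfk}(i), and H\"older's inequality against the precise admissible exponents defining $X(I)$ --- but this has already been recorded before the statement and may be taken as given.
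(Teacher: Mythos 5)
Your proposal is correct and follows essentially the same route as the paper: the contraction mapping principle on the ball $B(T,a)$, with the linear part controlled by the homogeneous Strichartz estimate and the Duhamel term by the displayed nonlinear estimate, the positivity of $\theta(n,p)=\frac{4-n(p-1)}{4}$ in the subcritical range providing the smallness, and the resulting time $T(\rho)\approx \rho^{4(p-1)/(n(p-1)-4)}$ matching the paper's. The details you supply (the choice $a=2C\rho$, the absorption argument, and the propagation of uniqueness beyond the ball) are exactly the standard ones the paper leaves implicit.
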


\begin{teore}\label{locexistL2n=4}
    Let $p=1+\frac{4}{n}$. Assume that  \textnormal{\ref{H1}} and \textnormal{\ref{H2*}} hold. Then for any $\mathbf{u}_0\in \mathbf{L}^2$, there exists $T(\mathbf{u}_0)>0$ (depending on $\mathbf{u}_0$) such that  system \eqref{system1}
 has a unique  solution $\mathbf{u}\in\mathbf{X}(I)$ with $I=[-T(\mathbf{u}_0),T(\mathbf{u}_0)]$.
\end{teore}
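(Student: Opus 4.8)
The plan is to obtain the solution as a fixed point of the integral operator $\Gamma$ on the ball $\tilde{B}(T,a)$, just as in Theorem \ref{localexistenceL2}, but now taking into account that at $p=1+\frac{4}{n}$ the exponent $\theta(n,p)=\frac{4-n(p-1)}{4}$ vanishes, so the contraction can no longer be closed by shrinking $T$. The key structural fact is that the pair $(p+1,p+1)$ is $L^2$-admissible exactly in this critical regime: since $p+1=2+\frac{4}{n}$, one checks that $\frac{2}{p+1}+\frac{n}{p+1}=\frac{n}{2}$, so the homogeneous and inhomogeneous Strichartz estimates yield
$$\|U_k(\cdot)u_{k0}\|_{L^{p+1}(I;L^{p+1})}\le C\|u_{k0}\|_{L^2},\qquad \Big\|\int_0^tU_k(t-t')g\,dt'\Big\|_{L^{p+1}(I;L^{p+1})}\le C\|g\|_{L^{(p+1)'}(I;L^{(p+1)'})},$$
together with the analogous control of the $L^\infty(I;L^2)$ norm.

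Next I would prove the nonlinear estimate with \emph{no} power of $T$ in front. Starting from \eqref{estdiffkeq} and using the H\"older splitting $\frac{1}{(p+1)'}=\frac{p-1}{p+1}+\frac{1}{p+1}$ in both the space and the time variables, one arrives at
$$\|f_k(\mathbf{u})-f_k(\mathbf{u}')\|_{L^{(p+1)'}(I;L^{(p+1)'})}\le C\big(\|\mathbf{u}\|_{\mathbf{L}^{p+1}(I;\mathbf{L}^{p+1})}^{p-1}+\|\mathbf{u}'\|_{\mathbf{L}^{p+1}(I;\mathbf{L}^{p+1})}^{p-1}\big)\|\mathbf{u}-\mathbf{u}'\|_{\mathbf{L}^{p+1}(I;\mathbf{L}^{p+1})}.$$
Combining this with the inhomogeneous Strichartz bound gives, on $\tilde{B}(T,a)$, the self-map estimate $\|\Gamma(\mathbf{u})\|\le\eta+Ca^{p}$ and the difference estimate $\|\Gamma(\mathbf{u})-\Gamma(\mathbf{u}')\|\le 2Ca^{p-1}\|\mathbf{u}-\mathbf{u}'\|$, where $\eta:=\sum_{k=1}^{l}\|U_k(\cdot)u_{k0}\|_{L^{p+1}(I;L^{p+1})}$ and the norms are those of $\tilde{B}(T,a)$.

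The decisive step, and the one that forces $T$ to depend on $\mathbf{u}_0$ itself rather than only on its $L^2$ norm, is to make $\eta$ small. First I would fix $a$ small enough that $2Ca^{p-1}\le\frac12$ and $Ca^{p}\le\frac{a}{2}$, which depends only on the structural constant $C$. Since $U_k(\cdot)u_{k0}\in L^{p+1}(\mathbb{R};L^{p+1})$ for each fixed $u_{k0}\in L^2$, the absolute continuity of the Strichartz integral (dominated convergence) gives $\|U_k(\cdot)u_{k0}\|_{L^{p+1}([-T,T];L^{p+1})}\to0$ as $T\to0$; hence one can pick $T=T(\mathbf{u}_0)>0$ so that $\eta\le\frac{a}{2}$. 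With these choices $\Gamma$ maps $\tilde{B}(T,a)$ into itself and is a contraction, and the Banach fixed point theorem yields a unique fixed point $\mathbf{u}\in\tilde{B}(T,a)$.

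Finally I would upgrade this fixed point to a genuine $\mathbf{X}(I)$ solution and address uniqueness. Feeding the Duhamel formula into the $(\infty,2)$ endpoint of Strichartz bounds $\|\mathbf{u}\|_{\mathbf{L}^\infty(I;\mathbf{L}^2)}$ by $C\|\mathbf{u}_0\|_{\mathbf{L}^2}+C\|f(\mathbf{u})\|_{L^{(p+1)'}(L^{(p+1)'})}<\infty$, and continuity in $t$ with values in $\mathbf{L}^2$ follows from continuity of the free group and of the Duhamel term, so $\mathbf{u}\in\mathbf{X}(I)$. Uniqueness inside the ball is immediate from the contraction, while uniqueness in the full class follows by the usual argument of comparing two solutions on a subinterval where both have small $L^{p+1}(L^{p+1})$ norm and then propagating. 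I expect the main obstacle to be precisely this loss of the $T$-gain: the whole scheme rests on verifying that $(p+1,p+1)$ is the critical admissible pair and that the nonlinear estimate closes with no factor of $T$, so smallness must be extracted instead from the absolute continuity of the linear Strichartz norm.
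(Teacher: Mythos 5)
Your proposal is correct and follows essentially the same route the paper indicates: a contraction on the ball $\tilde{B}(T,a)$ in $\mathbf{L}^{p+1}(I;\mathbf{L}^{p+1})$, with the admissible pair $(p+1,p+1)$ and smallness extracted from the absolute continuity of the linear Strichartz norm rather than from a power of $T$. The paper only sketches this step; your write-up supplies the standard details (admissibility check, H\"older splitting, and the reason $T$ depends on $\mathbf{u}_0$ itself) consistently with that sketch.
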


 Now, we make some comments about the local well-posedness theory in $H^1$. We consider  the following spaces



\begin{equation}\label{defq}
Y(I)=\begin{cases}
(\mathcal{C}\cap L^{\infty})(I; H^1)\cap L^{q}\left(I;W^{1}_{p+1}\right),\qquad \mbox{$p$ satisfying \eqref{pcondt}},\quad q=\frac{4(p+1)}{n(p-1)};\\
(\mathcal{C}\cap L^{\infty})(I; H^1)\cap L^{2}(I;W^{1}_{p+1}),\qquad p=\frac{n+2}{n-2}, \qquad n\geq 3,
\end{cases}
\end{equation}
on the time interval $I=[-T,T]$ with $T>0$. The norm in $Y(I)$ is defined as
\begin{equation*}
\|f\|_{Y(I)}=\begin{cases}
\|f\|_{L^{\infty}( H^1)}+ \|f\|_{L^{q}( W^{1}_{p+1})},\qquad \mbox{$p$ satisfying \eqref{pcondt}},\quad q=\frac{4(p+1)}{n(p-1)};\\
\|f\|_{L^{\infty}( H^1)}+ \|f\|_{L^{2}( W^{1}_{p+1})}, \qquad p=\frac{n+2}{n-2}, \qquad n\geq 3.
\end{cases}
\end{equation*}
 and define 
 \begin{equation}\label{thetaY}
 \theta(n,p)=
 \begin{cases}
 \qquad 1,\qquad\qquad\quad\mbox{if}\quad n=1;\\
 \frac{2+n+(2-n)p}{2(p+1)}, \quad\mbox{if}\quad  n\geq 2.
 \end{cases}
 \end{equation}

From the definition of spaces  $X(I)$ and $Y(I)$, we conclude that $Y(I)\hookrightarrow X(I)$ and $\|\nabla f\|_{X(I)}\leq  \|f\|_{Y(I)}$. This combined with H\"{o}lder inequalities and the embedding $H^{1}(\R)\hookrightarrow L^{\infty}(\R)$ gives for $f,g\in Y(I)$

\begin{equation}\label{estfg1}
\|f^{p-1}g\|_{L^{1}( L^{2})}\leq
CT^{\theta(n,p)}\|f\|^{p-1}_{Y(I)}\|g\|_{Y(I)}, \quad\mbox{if}\quad  n=1.
\end{equation}

Also, using H\"older inequality and the embeddings, $H^{1}(\R^{n})\hookrightarrow L^{p+1}(\R^{n})$ and $W^{1}_{p+1}(\R^{n})\hookrightarrow L^{p+1}(\R^{n})$ we get for $f,g\in Y(I)$

\begin{equation}\label{estfg2}
 \|f^{p-1}g\|_{L^{q'}(L^{r'})}\leq C T^{\theta(n,p)}\|f\|^{p-1}_{Y(I)}\|g\|_{Y(I)}, \quad if\quad n\geq 2.
 \end{equation}

As a consequence of \eqref{estfg1} and \eqref{estfg2}  Lemma \ref{limfk} we have  for
$\mathbf{u}, \mathbf{u}'\in \mathbf{Y}(I)$, 
 \begin{equation*}
\left\|\int_{0}^{t}U_{k}(t-t') \frac{1}{\alpha_{k}}[ f_{k}(\mathbf{u})-f_{k}(\mathbf{u}')]\;dt'\right\|_{Y(I)}\leq CT^{\theta(n,p)}\left(\|\mathbf{u}\|^{p-1}_{\mathbf{Y}(I)}+\|\mathbf{u}'\|^{p-1}_{\mathbf{Y}(I)}\right)\|\mathbf{u}-\mathbf{u}'\|_{\mathbf{Y}(I)},
\end{equation*}
 for some time interval $I$ and $\theta(n,p)$ given by \eqref{thetaY}.

 From the above discussion, a  contraction mapping argument the following  local $H^1$-solutions existence results in the subcritical and critical cases.

 \begin{teore}\label{localexistenceH1} Let $p$ satisfy \eqref{pcondt}. Assume that \textnormal{\ref{H1}} and \textnormal{\ref{H2*}} hold. Then for any $r>0$ there exists $T(r)>0$ such that for any $\mathbf{u}_0\in \mathbf{H}^1 $ with $\|\mathbf{u}_0\|_{\mathbf{H}^1}\leq r$,  system \eqref{system1}
 has a unique  solution $\mathbf{u}\in \mathbf{Y}(I)$ with $I=[-T(r),T(r)]$.
\end{teore}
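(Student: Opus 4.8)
\emph{The plan is to} solve \eqref{system1} by recasting it as the fixed-point equation $\mathbf{u}=\Gamma(\mathbf{u})$ for the Duhamel operator $\Gamma=(\Phi_1,\dots,\Phi_l)$ introduced above, and then to invoke the Banach fixed-point theorem on a suitable complete metric space. Concretely, I would work on the closed ball
\begin{equation*}
B(T,a)=\left\{\mathbf{u}\in\mathbf{Y}(I):\ \|\mathbf{u}\|_{\mathbf{Y}(I)}\leq a\right\},\qquad I=[-T,T],
\end{equation*}
endowed with the metric induced by $\|\cdot\|_{\mathbf{Y}(I)}$, which makes $B(T,a)$ complete. The radius $a$ and the time $T$ are the two free parameters: the strategy is to choose $a$ comparable to the size $r$ of the data and then to absorb the nonlinear contribution by taking $T=T(r)$ small.

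Next I would split $\Gamma$ into its linear and nonlinear parts. For the linear part, the homogeneous Strichartz estimate for the group $U_k(t)=e^{i\frac{t}{\alpha_k}(\gamma_k\Delta-\beta_k)}$ gives $\|U_k(\cdot)u_{k0}\|_{Y(I)}\leq C\|u_{k0}\|_{H^1}$, and summing over $k$ yields $\|(U_1(\cdot)u_{10},\dots,U_l(\cdot)u_{l0})\|_{\mathbf{Y}(I)}\leq C\|\mathbf{u}_0\|_{\mathbf{H}^1}\leq Cr$. For the nonlinear part, the key inhomogeneous estimate established just before the statement — a consequence of Strichartz's inequalities, Lemma \ref{limfk} and the embeddings \eqref{estfg1}--\eqref{estfg2} — reads, for $\mathbf{u},\mathbf{u}'\in\mathbf{Y}(I)$,
\begin{equation*}
\left\|\int_0^t U_k(t-t')\frac{1}{\alpha_k}\bigl[f_k(\mathbf{u})-f_k(\mathbf{u}')\bigr]\,dt'\right\|_{Y(I)}\leq CT^{\theta(n,p)}\left(\|\mathbf{u}\|_{\mathbf{Y}(I)}^{p-1}+\|\mathbf{u}'\|_{\mathbf{Y}(I)}^{p-1}\right)\|\mathbf{u}-\mathbf{u}'\|_{\mathbf{Y}(I)}.
\end{equation*}
Setting $\mathbf{u}'=\mathbf{0}$ and using $f_k(\mathbf{0})=0$ from \ref{H1}, the same bound controls $\Gamma(\mathbf{u})$ itself. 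Choosing $a=2Cr$, the two estimates give $\|\Gamma(\mathbf{u})\|_{\mathbf{Y}(I)}\leq Cr+CT^{\theta(n,p)}a^{p}\leq a$ as soon as $T$ is small enough that $CT^{\theta(n,p)}a^{p-1}\leq\tfrac12$; hence $\Gamma$ maps $B(T,a)$ into itself.

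For the contraction property I would apply the displayed estimate directly with $\mathbf{u},\mathbf{u}'\in B(T,a)$, obtaining $\|\Gamma(\mathbf{u})-\Gamma(\mathbf{u}')\|_{\mathbf{Y}(I)}\leq 2CT^{\theta(n,p)}a^{p-1}\|\mathbf{u}-\mathbf{u}'\|_{\mathbf{Y}(I)}$, so the same smallness condition on $T$ forces the Lipschitz constant below $1$. The Banach fixed-point theorem then produces a unique $\mathbf{u}\in B(T,a)$ with $\Gamma(\mathbf{u})=\mathbf{u}$, which is the sought solution; the inclusion $\mathbf{u}\in(\mathcal{C}\cap L^\infty)(I;\mathbf{H}^1)$ built into $\mathbf{Y}(I)$ supplies the continuity in time, and uniqueness in all of $\mathbf{Y}(I)$ follows from the contraction estimate by a standard continuation-in-$T$ argument. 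Since $T$ enters only through $CT^{\theta(n,p)}a^{p-1}$ and $a$ depends only on $r$, the resulting $T=T(r)$ depends on the data solely through its norm, as claimed. The one point that genuinely uses the subcritical hypothesis \eqref{pcondt} — and which I expect to be the crux — is the strict positivity of the exponent $\theta(n,p)$ in \eqref{thetaY}: a short computation shows $2+n+(2-n)p>0$ exactly when $p<\frac{n+2}{n-2}$, so the factor $T^{\theta(n,p)}$ genuinely tends to $0$ as $T\to0$. At the critical exponent $\theta$ would vanish and this time-smallness mechanism would collapse, which is precisely why the critical case is excluded here and handled through the separate space in \eqref{defq}.
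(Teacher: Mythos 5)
Your proposal follows the same route as the paper: the Duhamel reformulation, the homogeneous and inhomogeneous Strichartz bounds in $\mathbf{Y}(I)$ together with the key estimate preceding the statement, and a contraction on the ball $B(T,a)$ with $a\sim r$ and $T$ chosen so that $CT^{\theta(n,p)}a^{p-1}\leq\tfrac12$, which is exactly the argument the paper sketches. Your closing observation that the subcritical hypothesis \eqref{pcondt} is what guarantees $\theta(n,p)>0$, and hence that the smallness in $T$ is available, correctly identifies the role of that assumption and matches why the paper treats the critical case separately in Theorem \ref{locexistH1n=6}.
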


\begin{teore}\label{locexistH1n=6} Let $p=\frac{n+2}{n-2}$, $n\geq 3$. Assume that  \textnormal{\ref{H1}} and \textnormal{\ref{H2*}} hold. Then for any  $\mathbf{u}_0:=(u_{10},\ldots,u_{l0})\in \mathbf{H}^1 $ there exists $T(\mathbf{u}_0)>0$ such that  system \eqref{system1}
 has a unique  solution $\mathbf{u}=(u_{1},\ldots,u_{l})\in\mathbf{Y}(I)$ with $I=[-T(\mathbf{u}_0),T(\mathbf{u}_0)]$.
\end{teore}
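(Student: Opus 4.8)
The plan is to realize the solution as the unique fixed point of the Duhamel operator $\Gamma$ introduced above, now acting on the energy-critical space $\mathbf{Y}(I)$ with $Y(I)=(\mathcal{C}\cap L^{\infty})(I;H^1)\cap L^{2}(I;W^{1}_{p+1})$, via the contraction mapping principle. The relevant admissible pair is $(q,r)=(2,p+1)$ with $p+1=\frac{2n}{n-2}$, which is Strichartz-admissible in dimension $n\geq 3$. The nonlinear estimate is precisely the one recorded just before Theorem \ref{localexistenceH1}: combining the Strichartz inequalities, Lemma \ref{limfk} and the H\"older bound \eqref{estfg2} gives, for $\mathbf{u},\mathbf{u}'\in\mathbf{Y}(I)$,
\begin{equation*}
\left\|\int_{0}^{t}U_{k}(t-t')\frac{1}{\alpha_{k}}[f_{k}(\mathbf{u})-f_{k}(\mathbf{u}')]\;dt'\right\|_{Y(I)}\leq CT^{\theta(n,p)}\left(\|\mathbf{u}\|^{p-1}_{\mathbf{Y}(I)}+\|\mathbf{u}'\|^{p-1}_{\mathbf{Y}(I)}\right)\|\mathbf{u}-\mathbf{u}'\|_{\mathbf{Y}(I)}.
\end{equation*}
The decisive point is that in the critical case $p=\frac{n+2}{n-2}$ one has $\theta(n,p)=0$, as a direct computation from \eqref{thetaY} shows, so, \emph{unlike} in Theorem \ref{localexistenceH1}, there is no positive power of $T$ available to produce smallness.

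Since the gain cannot come from shrinking $T$ against a fixed ball radius, I would instead extract smallness from the linear flow. By the global Strichartz estimate, $U_{k}(\cdot)u_{k0}\in L^{2}(\R;W^{1}_{p+1})$ for every $u_{k0}\in H^1$; hence, by dominated convergence, the tail $\|U_{k}(\cdot)u_{k0}\|_{L^{2}(I;W^{1}_{p+1})}$ tends to $0$ as $|I|\to 0$. Fixing a small $\delta>0$, to be chosen depending only on the constant $C$ above and on $p$, I would then pick $T=T(\mathbf{u}_0)>0$ so small that $\sum_{k}\|U_{k}(\cdot)u_{k0}\|_{L^{2}(I;W^{1}_{p+1})}\leq\delta$ on $I=[-T,T]$. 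This is exactly the step in which the lifespan depends on the \emph{profile} of $\mathbf{u}_0$ and not merely on $\|\mathbf{u}_0\|_{\mathbf{H}^1}$, in contrast with the subcritical Theorem \ref{localexistenceH1}.

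With this choice I would run the contraction on the ball
\begin{equation*}
B=\left\{\mathbf{u}\in\mathbf{Y}(I):\ \|\mathbf{u}\|_{L^{\infty}(\mathbf{H}^{1})}\leq 2C\|\mathbf{u}_0\|_{\mathbf{H}^1},\ \|\mathbf{u}\|_{L^{2}(\mathbf{W}^{1}_{p+1})}\leq 2\delta\right\},
\end{equation*}
endowed with the $\mathbf{Y}(I)$-metric. Using the nonlinear estimate with $\theta(n,p)=0$, the Duhamel term is bounded in $L^{2}(\mathbf{W}^{1}_{p+1})$ by $C\delta^{p-1}\|\mathbf{u}-\mathbf{u}'\|_{\mathbf{Y}(I)}$, so for $\delta$ small enough (independently of $\mathbf{u}_0$) one obtains both the self-mapping property $\Gamma(B)\subset B$ and the contraction estimate with constant $\tfrac12$; the $L^{\infty}(\mathbf{H}^{1})$ component is controlled by the linear contribution $C\|\mathbf{u}_0\|_{\mathbf{H}^1}$ together with a nonlinear term that is again $O(\delta^{p-1})$. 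Banach's fixed point theorem then yields a unique $\mathbf{u}\in B$ solving \eqref{system1} on $I=[-T(\mathbf{u}_0),T(\mathbf{u}_0)]$, and uniqueness in the full class $\mathbf{Y}(I)$ follows by a standard continuity argument, comparing any two solutions on a subinterval on which both have small $L^{2}(\mathbf{W}^{1}_{p+1})$ norm.

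The main obstacle, and the only genuine departure from the subcritical proof, is the loss of the $T^{\theta}$ factor: all required smallness must be manufactured from the vanishing of the free-evolution Strichartz tail over short time intervals, which is precisely what ties the existence time to the individual datum $\mathbf{u}_0$ rather than to its norm.
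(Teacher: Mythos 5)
Your argument is correct and is, in substance, the argument the theorem requires: the paper's own text merely asserts that ``a contraction mapping argument'' yields both Theorems \ref{localexistenceH1} and \ref{locexistH1n=6}, even though, as you rightly compute from \eqref{thetaY}, $\theta(n,p)=0$ when $p=\frac{n+2}{n-2}$, so the displayed nonlinear estimate cannot produce smallness by shrinking $T$. Your fix --- extracting smallness from the vanishing of $\|U_{k}(\cdot)u_{k0}\|_{L^{2}(I;W^{1}_{p+1})}$ as $|I|\to 0$, justified by the endpoint Strichartz estimate for $n\geq 3$ together with absolute continuity of the integral --- is the standard device for critical exponents, is exactly what makes $T$ depend on the profile of $\mathbf{u}_0$ rather than on $\|\mathbf{u}_0\|_{\mathbf{H}^1}$, and mirrors what the paper implicitly does in the $L^2$-critical case (Theorem \ref{locexistL2n=4}, where the ball $\tilde{B}(T,a)$ lives in $\mathbf{L}^{p+1}(I;\mathbf{L}^{p+1})$ for the same reason). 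One bookkeeping point deserves care: with the endpoint pair $(q,r)=(2,p+1)$, H\"older in space forces all $p$ factors of the nonlinearity into $L^{p+1}_x$, and H\"older in time then places the $p-1$ ``extra'' factors in $L^{2(p-1)}_t(L^{p+1}_x)$, which interpolates between $L^{\infty}_t(L^{p+1}_x)$ (controlled by $L^{\infty}(H^1)$, hence \emph{not} small) and $L^{2}_t(L^{p+1}_x)$ (small on your interval). The Duhamel term is therefore bounded by $C\|\mathbf{u}\|_{L^{\infty}(\mathbf{H}^1)}^{p-2}\,\delta\,\|\mathbf{u}-\mathbf{u}'\|_{\mathbf{Y}(I)}$ rather than by $C\delta^{p-1}\|\mathbf{u}-\mathbf{u}'\|_{\mathbf{Y}(I)}$, so $\delta$ must be chosen small depending on $\|\mathbf{u}_0\|_{\mathbf{H}^1}$ as well as on the Strichartz constant; this does not affect the validity of the scheme, and the contraction still closes on your ball $B$. (The residual issue that the difference estimate in Lemma \ref{limfk} carries a factor $|u_j-u_j'|^{p-1}$ with $p-1<1$ for $n\geq 7$, so that the map is only H\"older rather than Lipschitz continuous, is inherited from the paper's own estimates and is not specific to your proof.)
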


 Finally, as a consequence  of Theorems \ref{localexistenceL2} and  \ref{localexistenceH1},  we have the following blow up alternative:  there exist $T_*,T^*\in(0,\infty]$ such that the local solutions can be extend to the interval $(-T_*,T^*)$; moreover if $T_*<\infty$  (respect. $T^*<\infty$), then
$$
\lim_{t\to -T_*}\|\mathbf{u}(t)\|_{\mathbf{L}^2}=\infty, \qquad (respect.\lim_{t\to T^*}\|\mathbf{u}(t)\|_{\mathbf{L}^2}=\infty  ),
$$
for $L^2$-solutions, and 
$$
\lim_{t\to -T_*}\|\mathbf{u}(t)\|_{\mathbf{H}^1}=\infty, \qquad (respect.\lim_{t\to T^*}\|\mathbf{u}(t)\|_{\mathbf{H}^1}=\infty  ),
$$
for $H^1$-solutions.

 \section{Existence of ground states solutions}\label{sec.gs}

 This section is devoted to  proving the existence of standing wave  solutions for \eqref{system1} for the subcritical regime \eqref{pcondt}. This is achieved by considering the related nonlinear elliptical system and a special solution of it called \textit{ground state} solution. To do so, we will consider  minimizers of a Weinstain-type functional and use the fact that the space of radially-symmetric non-increasing functions, $H_{rd}^{1}(\R^{n})$,  is compactly embedded in  $L^{p+1}(\R^{n})$, for $p$ satisfying \eqref{pcondt}.  Throughout this section we assume that H's assumptions hold. In fact, assumption \ref{H8} is fundamental to be able to take radially symmetric minimizing sequences of the  Weinstain-type functional (see Lemma \ref{lemma2} (iv)). \\

The \textit{ground states} allows us to find an explicit expression for the best constant of a Gagliardo-Nirenberg type inequality, since this inequality is related to the Weinstain-type functional.  Let  $p$ satisfy the subcritical regime \eqref{pcondt}. From  the definition of the functionals $P$, $Q$ and $K$ (see \eqref{conservationcharge} and \eqref{funcK1}),   Lemma \ref{estdifF} i)  and assumption \ref{H6} we have for $\mathbf{u} \in  \mathbf{H}^{1}$

\begin{equation*}
\begin{split}
|P(\mathbf{u})|
&\leq \int \left|\mathrm{Re}\,F(\mathbf{u})\right|\;dx\leq C \sum_{k=1}^{l}\|u_{k}\|_{L^{p+1}}^{p+1}\leq  C_0Q(\mathbf{u})^{\frac{p-1}{2}(1-s_c)}K(\mathbf{u})^{\frac{p-1}{2}s_c+1},
\end{split}
\end{equation*}
where $C_0$ is a positive constant depending on $\alpha_{k}$ and $\gamma_{k}$, for $k=1,\ldots,l$ and $s_{c}$ is the critical exponent defined in \eqref{critexp}.

Thus, we obtain the following vectorial Gagliardo-Nirenberg-type inequality:
\begin{equation}\label{GNE2}
|P(\mathbf{u})| \leq C_{opt}Q(\mathbf{u})^{\frac{p-1}{2}(1-s_c)}K(\mathbf{u})^{\frac{p-1}{2}s_c+1}.
\end{equation}

In the final of this section we  obtain the best constant $C_{opt}$ in the inequality \eqref{GNE2} whose expression is in terms of the . \\

We start by replacing \eqref{standing} (standing wave solution) in   system  \eqref{system1}. From Lemma  \ref{H34impGC} ii) we know that for $k=1,\ldots,l$ and any $\omega\in \R$, we have
\begin{equation*}
f_{k}\left(e^{i\frac{\sigma_{1}}{2}\omega t}\psi_{1},\ldots,e^{\frac{\sigma_{l}}{2}\omega it}\psi_{l}\right)=e^{i\frac{\sigma_{k}}{2}\omega t}f_{k}(\psib),
\end{equation*}
where $\psib=(\psi_{1},\ldots,\psi_{l})$.

 Using this fact, we conclude that $\psi_{k}$ must satisfy the following elliptic system
\begin{equation}\label{systemelip}
\displaystyle -\gamma_{k}\Delta \psi_{k}+\left(\frac{\sigma_{k}\alpha_{k}}{2}\omega+\beta_{k}\right) \psi_{k}=f_{k}(\psib),\qquad k=1,\ldots,l.
\end{equation}

Note that by  Lemma \ref{fkreal}),  the nonlinearities $f_{k}$ are real-valued functions, so system \eqref{systemelip} is well defined. Also, we restrict our attention to values of $\omega$ such that $\displaystyle \omega  > -\frac{2\beta_{k}}{\sigma_{k}\alpha_{k}}$, since we are looking  for non-trivial solutions. \\

Next, we recall, \eqref{FunctionalI1}-\eqref{functionalQ}, that  the action functional associated with \eqref{systemelip}   is
\begin{equation}\label{FunctionalI2}
I(\psib)=\frac{1}{2}\left[K(\psib)+\mathcal{Q}(\psib)\right]-P(\psib).
\end{equation}
Also, we introduce the functional 
\begin{equation}\label{functionalJ}
J(\psib):=\frac{\mathcal{Q}(\psib)^{\frac{p-1}{2}(1-s_c)}K(\psib)^{\frac{p-1}{2}s_c+1}}{P(\psib)},\quad \quad P(\psib)\neq 0.
\end{equation}

Here are some comments about the functional defined above.

\begin{obs}
 It is not difficult to see that the functionals $K$, $\mathcal{Q}$, and $P$ are continuous on $ \mathbf{H}^1$ the continuity of  $P$ is a consequence of Lemma \ref{estdifF}). Also, these functionals have Fr\'echet derivatives. In particular, the action functional $I$ has Fr\'echet derivative.  In fact, the critical points of $I$ are the solutions of \eqref{systemelip}
\end{obs}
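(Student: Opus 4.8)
The plan is to verify the three assertions of the remark in turn---continuity, Fr\'echet differentiability, and the identification of critical points---leaning on the growth estimates already established in Lemmas \ref{limfk} and \ref{estdifF}. For the quadratic functionals there is essentially nothing beyond bookkeeping: both $K(\mathbf{u})=\sum_k\gamma_k\|\nabla u_k\|_{L^2}^2$ and $\mathcal{Q}(\mathbf{u})=\sum_k\big(\tfrac{\sigma_k\alpha_k}{2}\omega+\beta_k\big)\|u_k\|_{L^2}^2$ are bounded symmetric real-bilinear forms evaluated on the diagonal, hence dominated by $C\|\mathbf{u}\|_{\mathbf{H}^1}^2$; this gives continuity on $\mathbf{H}^1$ at once, and expanding $K(\mathbf{u}+\mathbf{v})-K(\mathbf{u})$ and isolating the quadratic remainder identifies the Fr\'echet derivative $K'(\mathbf{u})\mathbf{v}=2\sum_k\gamma_k\,\mathrm{Re}\int\nabla u_k\cdot\overline{\nabla v_k}\,dx$, and analogously for $\mathcal{Q}'$.

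The substantive point is the potential term $P(\mathbf{u})=\mathrm{Re}\int F(\mathbf{u})\,dx$. For continuity I would integrate inequality \eqref{estdifFeq} of Lemma \ref{estdifF},
\[
|\mathrm{Re}\,F(\mathbf{u})-\mathrm{Re}\,F(\mathbf{u}')|\leq C\sum_{m,j}(|u_j|^{p}+|u_j'|^{p})\,|u_m-u_m'|,
\]
and apply H\"older with conjugate exponents $\tfrac{p+1}{p}$ and $p+1$, obtaining $|P(\mathbf{u})-P(\mathbf{u}')|\lesssim(\|\mathbf{u}\|_{\mathbf{L}^{p+1}}^{p}+\|\mathbf{u}'\|_{\mathbf{L}^{p+1}}^{p})\|\mathbf{u}-\mathbf{u}'\|_{\mathbf{L}^{p+1}}$; since the subcritical condition \eqref{pcondt} guarantees $H^1(\R^n)\hookrightarrow L^{p+1}(\R^n)$, this yields (local Lipschitz) continuity on $\mathbf{H}^1$, and finiteness of $P$ follows from the special case \eqref{estdifFeq1}. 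For differentiability I would propose the candidate $P'(\mathbf{u})\mathbf{v}=\mathrm{Re}\sum_k\int f_k(\mathbf{u})\overline{v_k}\,dx$, which is the natural choice in view of \eqref{fksegexp}, and then write the increment as $P(\mathbf{u}+\mathbf{v})-P(\mathbf{u})=\int_0^1\mathrm{Re}\sum_k\int f_k(\mathbf{u}+t\mathbf{v})\overline{v_k}\,dx\,dt$, so that the remainder $P(\mathbf{u}+\mathbf{v})-P(\mathbf{u})-P'(\mathbf{u})\mathbf{v}$ is governed by $f_k(\mathbf{u}+t\mathbf{v})-f_k(\mathbf{u})$, which \eqref{estdiffkeq} bounds by terms that, after H\"older and the embedding $H^1\hookrightarrow L^{p+1}$, are $o(\|\mathbf{v}\|_{\mathbf{H}^1})$; boundedness of the linear map $\mathbf{v}\mapsto P'(\mathbf{u})\mathbf{v}$ comes from \eqref{growthp}.

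With the three derivatives available, the action functional has Fr\'echet derivative $I'(\psib)=\tfrac12[K'(\psib)+\mathcal{Q}'(\psib)]-P'(\psib)$, and after one integration by parts the equation $I'(\psib)=0$ reads, for every $\mathbf{v}\in\mathbf{H}^1$,
\[
\mathrm{Re}\sum_{k=1}^{l}\int\Big[\gamma_k\nabla\psi_k\cdot\overline{\nabla v_k}+\Big(\tfrac{\sigma_k\alpha_k}{2}\omega+\beta_k\Big)\psi_k\overline{v_k}-f_k(\psib)\overline{v_k}\Big]\,dx=0 ,
\]
which is exactly the weak formulation of \eqref{systemelip}; hence critical points of $I$ coincide with weak solutions of the elliptic system, as claimed. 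The only delicate step is the Fr\'echet differentiability of $P$: everything else is routine, and the difficulty there is fully absorbed by the growth bounds \eqref{estdiffkeq}--\eqref{growthp} together with the subcritical Sobolev embedding, which is precisely why the regime \eqref{pcondt} is imposed.
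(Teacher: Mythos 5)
Your argument is correct and is exactly the route the paper intends: the remark itself only points to Lemma \ref{estdifF}, and your use of \eqref{estdifFeq} with H\"older and the subcritical embedding $H^1\hookrightarrow L^{p+1}$ for continuity of $P$ is the same computation the paper carries out when passing to the limit in $P(\tilde{\psib}_j)$ inside the proof of Theorem \ref{thm:existenceGSJgeral}, while the candidate derivative $P'(\mathbf{u})\mathbf{v}=\mathrm{Re}\sum_k\int f_k(\mathbf{u})\overline{v_k}\,dx$ is precisely what \eqref{fksegexp} encodes. The remainder estimate via \eqref{estdiffkeq} and the identification of critical points with weak solutions of \eqref{systemelip} are both sound, so nothing is missing.
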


\begin{defi}\label{defgroundstate} Let $\mathcal{C}$  be the set of non-trivial critical points of $I$. We say that $\psib\in \mathbf{H}^1$ is a ground state solution of \eqref{systemelip} if 
\begin{equation*}
I(\psib)=\inf\left\{I(\boldsymbol{\phi}); \boldsymbol{\phi}\in \mathcal{C}\right\}. 
\end{equation*}
We denote by $\mathcal{G}(\omega,\boldsymbol{\beta})$ the set of all ground states for system \eqref{systemelip}, where  $(\omega,\boldsymbol{\beta})$ indicates the dependence on the parameters $\omega$ and $\boldsymbol{\beta}$.
\end{defi}

Now we establish some relations between the functionals $K,\mathcal{Q}, P$ and $I$. Some of them are similar to the well known Pohozaev's identities for elliptic equations.  
\begin{lem}
\label{identitiesfunctionals}
Let $p$ satisfy \eqref{pcondt}.
If $\psib$ is a non-trivial solution of \eqref{systemelip} then,
\begin{equation}
P(\psib)=\frac{2}{p-1}I(\psib),\label{b}\\
\end{equation}
\begin{equation}
K(\psib)=nI(\psib),\label{d}\\
\end{equation}
\begin{equation}
\mathcal{Q}(\psib)=2(1-s_c)I(\psib).\label{e}
\end{equation}
\begin{eqnarray}\label{relationJandI}
J(\psib)=\frac{(p-1)n^{\frac{p-1}{2}s_c+1}}{2}\left[2(1-s_c)\right]^{\frac{p-1}{2}(1-s_c)}I(\psib)^{\frac{p-1}{2}}.
\end{eqnarray}
In particular, any non-trivial solution $\psib \in \mathcal{P}$ of \eqref{systemelip}  which is a minimizer of $J$ is a ground state of \eqref{systemelip}.

\end{lem}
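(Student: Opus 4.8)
The plan is to extract from the elliptic system \eqref{systemelip} the two standard variational identities — a Nehari-type identity and a Pohozaev-type identity — and then obtain \eqref{b}, \eqref{d}, \eqref{e} as linear combinations of them, after which \eqref{relationJandI} is a direct substitution and the final ``in particular'' claim follows from monotonicity. First I would pair \eqref{systemelip} with $\psi_k$. Since the $\psi_k$ are real and each $f_k(\psib)$ is real (Lemma \ref{fkreal}), multiplying the $k$-th equation by $\psi_k$, integrating over $\R^n$, integrating by parts the term $-\gamma_k\Delta\psi_k$, and summing over $k$ yields $K(\psib)+\mathcal{Q}(\psib)=\sum_k\int f_k(\psib)\psi_k\,dx$. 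By Lemma \ref{estdifF}(iii) the right-hand side equals $(p+1)\int\mathrm{Re}\,F(\psib)\,dx=(p+1)P(\psib)$, so
\[ K(\psib)+\mathcal{Q}(\psib)=(p+1)P(\psib). \]
Inserting this into $I=\tfrac12(K+\mathcal{Q})-P$ gives $I(\psib)=\tfrac{p-1}{2}P(\psib)$, which is \eqref{b}.

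Next I would derive the Pohozaev identity by pairing \eqref{systemelip} with $x\cdot\nabla\psi_k$. Using the elementary identities $\int(-\Delta\psi_k)(x\cdot\nabla\psi_k)\,dx=\tfrac{2-n}{2}\|\nabla\psi_k\|_{L^2}^2$ and $\int\psi_k\,(x\cdot\nabla\psi_k)\,dx=-\tfrac{n}{2}\|\psi_k\|_{L^2}^2$, together with Lemma \ref{estdifF}(ii), which gives $\sum_k f_k(\psib)\nabla\psi_k=\nabla\,\mathrm{Re}\,F(\psib)$ and hence $\sum_k\int f_k(\psib)(x\cdot\nabla\psi_k)\,dx=-nP(\psib)$ after one more integration by parts, summing over $k$ produces
\[ \tfrac{n-2}{2}K(\psib)+\tfrac{n}{2}\mathcal{Q}(\psib)=nP(\psib). \]
Combining this with the Nehari identity and eliminating $P$ through \eqref{b}, I would solve the resulting $2\times2$ linear system for $K$ and $\mathcal{Q}$, obtaining $K(\psib)=\tfrac{n(p-1)}{2}P(\psib)=nI(\psib)$, which is \eqref{d}, and $\mathcal{Q}(\psib)=\big[(p+1)-\tfrac{n(p-1)}{2}\big]P(\psib)$; rewriting the bracket via $P=\tfrac{2}{p-1}I$ and $s_c=\tfrac n2-\tfrac{2}{p-1}$ simplifies it to $2(1-s_c)I(\psib)$, which is \eqref{e}.

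Finally, \eqref{relationJandI} follows by inserting \eqref{b}, \eqref{d}, \eqref{e} into the definition \eqref{functionalJ} of $J$: the powers of $I$ in the numerator add to $\tfrac{p-1}{2}(1-s_c)+\tfrac{p-1}{2}s_c+1=\tfrac{p+1}{2}$, and dividing by $P=\tfrac{2}{p-1}I$ leaves $I^{(p-1)/2}$ with precisely the constant $\tfrac{(p-1)n^{\frac{p-1}{2}s_c+1}}{2}[2(1-s_c)]^{\frac{p-1}{2}(1-s_c)}$. For the last assertion, observe that for a non-trivial solution $K(\psib)>0$, so \eqref{d} forces $I(\psib)>0$; hence on the set of non-trivial solutions $J=C_{n,p}\,I^{(p-1)/2}$ with $C_{n,p}>0$ is a strictly increasing function of $I$, and any minimizer of $J$ there is simultaneously a minimizer of $I$ over the non-trivial critical points, i.e. a ground state in the sense of Definition \ref{defgroundstate}.

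The main obstacle is the rigorous justification of the weighted integrations by parts in the Pohozaev step, which require sufficient decay of $\psib$ at infinity. I would handle this either by invoking elliptic regularity and the decay of $H^1$ solutions of \eqref{systemelip}, or, to bypass the weight $x$ altogether, by reading both identities off the derivatives at $\lambda=1$ of $\mu\mapsto I(\mu\psib)$ (amplitude scaling, which uses the homogeneity \ref{H5*} of $F$) and $\lambda\mapsto I(\psib(\cdot/\lambda))$ (spatial dilation), these being admissible variations because $\psib$ is a critical point of $I$.
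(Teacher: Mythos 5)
Your proposal is correct and follows exactly the route the paper intends: the paper's proof is only a pointer to \cite[Lemmas 4.4 and 4.7]{NoPa2}, and the argument there is precisely your combination of the Nehari identity (pairing with $\psi_k$ and using Lemma \ref{estdifF}(iii)) with the Pohozaev identity (pairing with $x\cdot\nabla\psi_k$ and using Lemma \ref{estdifF}(ii)), followed by elimination and substitution into \eqref{functionalJ}. Your closing remark on justifying the weighted integration by parts (or replacing it by the dilation-derivative argument) is a sensible precaution that the paper leaves implicit.
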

\begin{proof}
The proof can be achieved by adapting the proof in reference   \cite[Lemmas 4.4 and 4.7]{NoPa2}. 
\end{proof}

Before continuing, we introduce some useful notation. Given any non-negative function $f\in H^1(\mathbb{R}^n)$ we denote by $f^*$ its symmetric-decreasing rearrangement (see, for instance, \cite{Leoni} or \cite{Lieb}). Also, for any $\lambda>0$, $(\delta_{\lambda}g)(x)=g\left(\frac{x}{\lambda}\right)$. Thus, if $\mathbf{g}=(g_1,\ldots,g_l)\in \mathbf{H}^1$, we set $\mathbf{g}^*=(g_1^*,\ldots,g_l^*)$ and $(\delta_{\lambda}\mathbf{g})(x)=\left(g_1\left(\frac{x}{\lambda}\right), \ldots, g_l\left(\frac{x}{\lambda}\right)\right)$.

The functionals introduced in this section satisfy certain  properties about scaling transformations and symmetric-decreasing rearrangement. These properties can be translated into the following results for the functional $J$.

\begin{lem}\label{lemma2}
Let $n\geq 1$ and $a,\lambda>0$. If  $\psib\in \mathcal{P}$ and $\mathbf{g}\in (\mathcal{C}_{0}^{\infty}(\R^{n}))^{l}$ we have
\begin{enumerate}
\item[(i)] $J(a\delta_{\lambda}\psib)=J(\psib)$;
\item[(ii)] $J(|\psib|)\leq J(\psib)$, where $|\psib|=(|\psi_{1}|,\ldots,|\psi_{l}|)$;
    \item[(iii)] $J'(a\delta_{\lambda}\psib)=a^{-1}J'(\psib)(\delta_{\lambda^{-1}}\mathbf{g})$.\\
    In addition, if $\psi_{k}$ is non-negative, for $k=1,\ldots,l$, then
    \item[(iv)] $J(\psib^{*})\leq J(\psib)$.
\end{enumerate}
\end{lem}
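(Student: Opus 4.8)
The plan is to establish the four assertions by tracking how the three building blocks $\mathcal{Q}$, $K$ and $P$ transform under dilation, modulus, and symmetric rearrangement, and then assembling these into the quotient $J$. Throughout I set $\alpha=\frac{p-1}{2}(1-s_c)$ and $\beta=\frac{p-1}{2}s_c+1$ for the exponents in \eqref{functionalJ}. A preliminary observation I would record is that, using $s_c=\frac{n}{2}-\frac{2}{p-1}$, one has $\alpha+\beta=\frac{p+1}{2}$ and $\beta=\frac{n(p-1)}{4}>0$, while in the subcritical regime \eqref{pcondt} one has $s_c<1$, so $\alpha>0$ as well; the positivity of both exponents is what makes the monotonicity arguments below go through.

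For part (i) I would first compute the elementary scaling laws. Writing $(a\delta_\lambda\psib)(x)=a\,\psib(x/\lambda)$ and changing variables $y=x/\lambda$, a direct calculation gives $\mathcal{Q}(a\delta_\lambda\psib)=a^2\lambda^n\mathcal{Q}(\psib)$ and $K(a\delta_\lambda\psib)=a^2\lambda^{n-2}K(\psib)$; for $P$ I would invoke the homogeneity of $F$ of degree $p+1$ from \ref{H5*} to obtain $P(a\delta_\lambda\psib)=a^{p+1}\lambda^n P(\psib)$. Substituting into \eqref{functionalJ}, the power of $a$ is $2(\alpha+\beta)-(p+1)$ and the power of $\lambda$ is $n\alpha+(n-2)\beta-n$. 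Both vanish by the identities $\alpha+\beta=\frac{p+1}{2}$ and $(p-1)s_c=\frac{n(p-1)}{2}-2$, yielding $J(a\delta_\lambda\psib)=J(\psib)$.

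Part (iii) then follows quickly by differentiating the invariance of part (i): replacing $\psib$ by $\psib+\epsilon\mathbf{h}$ in $J(a\delta_\lambda\psib)=J(\psib)$ and differentiating at $\epsilon=0$ gives $J'(a\delta_\lambda\psib)(a\delta_\lambda\mathbf{h})=J'(\psib)(\mathbf{h})$; setting $\mathbf{g}=a\delta_\lambda\mathbf{h}$, so that $\mathbf{h}=a^{-1}\delta_{\lambda^{-1}}\mathbf{g}$, produces exactly $J'(a\delta_\lambda\psib)(\mathbf{g})=a^{-1}J'(\psib)(\delta_{\lambda^{-1}}\mathbf{g})$. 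For part (ii) I would use that $\||\psi_k|\|_{L^2}=\|\psi_k\|_{L^2}$, so $\mathcal{Q}(|\psib|)=\mathcal{Q}(\psib)$; the diamagnetic inequality $|\nabla|\psi_k||\le|\nabla\psi_k|$, giving $K(|\psib|)\le K(\psib)$; and the bound $|P(\psib)|=|\mathrm{Re}\int F(\psib)\,dx|\le\int F(|\psib|)\,dx=P(|\psib|)$, where the inequality is \ref{H6} and the last equality uses that $F$ is real on the positive cone by \ref{H7}. Since $\alpha,\beta>0$ and (on $\mathcal{P}$) $P(\psib)>0$, these combine to $J(|\psib|)\le J(\psib)$.

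Part (iv) follows the same template: $\mathcal{Q}(\psib^*)=\mathcal{Q}(\psib)$ by equimeasurability and $K(\psib^*)\le K(\psib)$ by the P\'olya--Szeg\H{o} inequality, so it remains to prove $P(\psib^*)\ge P(\psib)$, and \emph{this step is the main obstacle}. For a single component this is classical, but here $F$ couples all $l$ components, so I would appeal to the decomposition $F=F_1+\cdots+F_m$ from \ref{H8} and apply, to each supermodular piece $F_s$, the multi-function rearrangement inequality for supermodular integrands vanishing on the coordinate hyperplanes (the content of \ref{H8}, equivalent to \eqref{condsupmod} in the $C^2$ case, cf.\ \cite{haj}); summing over $s$ gives $\int F(\psib^*)\,dx\ge\int F(\psib)\,dx$, and since the $\psi_k$ are non-negative this is precisely $P(\psib^*)\ge P(\psib)$. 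With $\alpha,\beta>0$ and $P>0$, the quotient again decreases, giving $J(\psib^*)\le J(\psib)$. I expect the care needed to justify the supermodular rearrangement inequality (measurability, the role of the vanishing-on-hyperplanes hypothesis, and reduction to the scalar Riesz-type result) to be the genuinely delicate point, and I would adapt the corresponding argument from \cite{NoPa2} rather than reprove it from scratch.
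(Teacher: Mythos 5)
Your proposal is correct and follows essentially the same route as the paper's (very brief) proof: scaling of $\mathcal{Q}$, $K$, $P$ via \ref{H5*} for (i) and (iii), the bound $|P(\psib)|\le P(|\psib|)$ from \ref{H6}--\ref{H7} together with $|\nabla|\psi_k||\le|\nabla\psi_k|$ for (ii), and equimeasurability, P\'olya--Szeg\H{o}, and the supermodular rearrangement inequality of \ref{H8} to get $P(\psib^*)\ge P(\psib)$ for (iv). Your writeup is in fact more complete than the paper's sketch (which only names the ingredients), and you correctly identify the rearrangement inequality for $P$ as the one genuinely nontrivial step.
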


\begin{proof}
The  proof of (i) and (iii)  are immediate consequences of the definitions of functionals $\mathcal{Q}$, $K$ and $P$ . For (ii) we must  use assumption \ref{H5*}. To prove (iv) we need  to use that
\begin{equation*}
\|\psi^{*}\|_{L^{2}}=\|\psi\|_{L^{2}}, \qquad \mbox{and} \qquad\|\nabla\psi^{*}\|_{L^{2}}\leq\|\nabla \psi\|_{L^{2}}. 
\end{equation*}
\end{proof}
and it is fundamental to use assumption \ref{H8}, to conclude $P(\psib^*)\geq P(\psib)$.\\

With the above lemmas in hand, we are able to present our main result concerning ground states. As usual, we will say that a function $\psib\in\mathbf{H}^1$ is positive (non-negative), and write $\psib>0$ ($\psib\geq0$), if each one of its components are positive (non-negative). Also, $\psib$ is radially symmetric if each one of its components are radially symmetric.\\

We are now in position to prove the existence  of \textit{ground states} solutions to system \eqref{system1}. This construction  is  adapted from  \cite{Weinstein}, in  which the result   for the single Nonlinear Schr\"{o}dinger equation is demonstrated (see also \cite{NoPa2}).  
\begin{teore}[Existence of ground state solutions]\label{thm:existenceGSJgeral}
 Assume that  \textnormal{\ref{H1}-\ref{H8}} hold. Let $p$ satisfy \eqref{pcondt} and  $ s_c= \frac{n}{2}-\frac{2}{p-1}$. The infimum
\begin{equation}\label{xi1}
\xi_1=\inf\limits_{\psib\in \mathcal{P}}J(\psib), \quad \mathcal{P}:=\{\psib\in \mathbf{H}^{1};\, P(\psib)>0\}
\end{equation}
 is attained at a function $\psib_0\in \mathcal{P} $ with the following properties:
\begin{enumerate}
\item[(i)] $\psib_0$ is a non-negative and radially symmetric function.
\item[(ii)]   Up to scaling $\psib_0$ is a positive ground state solution of \eqref{systemelip}.
In addition,
if $\tilde{\psib}$ is any ground state of \eqref{systemelip} then the infimun $\xi_1$ can be characterized by mean of the $L^2$ norm of $\tilde{\psib}$ as follow
\begin{equation}\label{inffunctionalJ}
\xi_{1}=\frac{(p-1)n^{\frac{p-1}{2}s_c+1}}{2}\left[2(1-s_c)\right]^{\frac{p-1}{2}(1-s_c)}\mathcal{Q}(\psib)^{\frac{p-1}{2}}.
\end{equation}
\end{enumerate}
\end{teore}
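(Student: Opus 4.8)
The plan is to obtain $\psib_0$ by the direct method of the calculus of variations, realizing it as the weak $\mathbf{H}^1$ limit of a normalized, radially symmetrized minimizing sequence for the Weinstein-type functional $J$ from \eqref{functionalJ}, and then identifying the minimizer with a ground state through the Pohozaev-type identities of Lemma \ref{identitiesfunctionals}. First I would record that $\mathcal{P}$ is non-empty (test functions supported in the positive cone give $P>0$ by Lemma \ref{fkreal}) and that a Gagliardo--Nirenberg-type inequality as in \eqref{GNE2} forces $\xi_1>0$, so the minimization is meaningful and $P(\psib^{(k)})$ will stay bounded away from $0$ and $\infty$.

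I would then start from a minimizing sequence $\{\psib^{(k)}\}\subset\mathcal{P}$ with $J(\psib^{(k)})\to\xi_1$. Using Lemma \ref{lemma2}(ii) and (iv) I would replace each $\psib^{(k)}$ first by $|\psib^{(k)}|$ and then by its symmetric-decreasing rearrangement, so that, without increasing $J$ and without leaving $\mathcal{P}$, the sequence may be taken non-negative, radially symmetric and non-increasing, hence lying in $\mathbf{H}^1_{rd}$. Exploiting the scale invariance $J(a\delta_\lambda\psib)=J(\psib)$ of Lemma \ref{lemma2}(i) — and the fact that $\mathcal{Q}$ and $K$ can be dilated independently through the two parameters $(a,\lambda)$ — I would rescale so that $\mathcal{Q}(\psib^{(k)})=K(\psib^{(k)})=1$ for every $k$; then $J(\psib^{(k)})=1/P(\psib^{(k)})$ and therefore $P(\psib^{(k)})\to1/\xi_1>0$.

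Next, the normalization bounds the sequence in $\mathbf{H}^1_{rd}$ (the $L^2$-part is controlled by $\mathcal{Q}$, whose coefficients $\tfrac{\sigma_k\alpha_k}{2}\omega+\beta_k$ are positive, the gradient part by $K$), so along a subsequence $\psib^{(k)}\rightharpoonup\psib_0$ in $\mathbf{H}^1$. The decisive tool is the compact embedding $H^1_{rd}(\R^n)\hookrightarrow L^{p+1}(\R^n)$, valid in the subcritical range \eqref{pcondt}: it gives strong convergence in $\mathbf{L}^{p+1}$, and continuity of $P$ (Lemma \ref{estdifF}) then yields $P(\psib_0)=\lim P(\psib^{(k)})=1/\xi_1>0$, so $\psib_0\in\mathcal{P}$ is non-trivial. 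By weak lower semicontinuity of $\mathcal{Q}$ and $K$ one has $\mathcal{Q}(\psib_0),K(\psib_0)\le1$, whence $J(\psib_0)\le1/P(\psib_0)=\xi_1$; since $\psib_0\in\mathcal{P}$ forces $J(\psib_0)\ge\xi_1$, the infimum is attained, $\mathcal{Q}(\psib_0)=K(\psib_0)=1$, and the convergence is in fact strong in $\mathbf{H}^1$. Non-negativity and radial symmetry are inherited by $\psib_0$ via a.e. convergence along a further subsequence, giving (i).

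Finally, since $\psib_0$ minimizes the scale-invariant $J$ over the open set $\mathcal{P}$, it is a critical point and $J'(\psib_0)=0$; writing this out with $f_k=2\tfrac{\partial}{\partial\overline{z}_k}\mathrm{Re}\,F$ (see \eqref{fksegexp}) yields, componentwise, $-c_1\gamma_k\Delta\psi_{0k}+c_2(\tfrac{\sigma_k\alpha_k}{2}\omega+\beta_k)\psi_{0k}=c_3 f_k(\psib_0)$ with positive constants $c_1,c_2,c_3$ assembled from $\mathcal{Q},K,P$, and I would use the two scaling parameters $(a,\lambda)$ once more to force $c_1=c_2=c_3$, so that the rescaled $\psib_0$ solves \eqref{systemelip} exactly (this is the ``up to scaling'' in the statement). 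Lemma \ref{identitiesfunctionals} then certifies that a non-trivial minimizer of $J$ solving \eqref{systemelip} is a ground state, while positivity of each component follows from \ref{H7} (which makes $f_k(\psib_0)\ge0$) and the strong maximum principle applied to each equation; the characterization \eqref{inffunctionalJ} comes from evaluating \eqref{relationJandI} at a ground state and eliminating $I$ through the identity \eqref{e}. The main obstacle is compactness of the minimizing sequence, since translations and dilations could destroy convergence: the symmetrization step is exactly what confines it to $\mathbf{H}^1_{rd}$, where the $L^{p+1}$ embedding is compact and vanishing or dichotomy is precluded. Accordingly the step requiring the most care is the rearrangement inequality $P(\psib^*)\ge P(\psib)$, which is genuinely non-trivial and rests on the supermodularity hypothesis \ref{H8}; combined with ruling out triviality of the limit ($P(\psib_0)>0$), this is precisely what makes the direct method close.
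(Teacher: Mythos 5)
Your proposal is correct and follows essentially the same route as the paper: symmetrize and rescale the minimizing sequence so that $K=\mathcal{Q}=1$, use the compact embedding $H^1_{rd}\hookrightarrow L^{p+1}$ together with continuity of $P$ and weak lower semicontinuity of $K,\mathcal{Q}$ to attain the infimum, then derive the Euler--Lagrange equation and rescale via $t_0\delta_{\lambda_0}$ to obtain a solution of \eqref{systemelip}, with Lemma \ref{identitiesfunctionals}, the strong maximum principle, and \eqref{relationJandI} finishing part (ii). No substantive differences from the paper's argument.
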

\begin{proof}
Let $(\psib_j)\subset \mathcal{P} $ be a minimizing sequence for \eqref{xi1}, i.e., 
$$\lim_{j\to \infty}J(\psib_j)=\xi_{1}.$$
Replacing $\psib_j$ by $|\psib_j|^*$, from Lemma \ref{lemma2}   we  may assume that $\psib_j$ are radially symmetric and non-increasing  functions in $\mathbf{H}^{1}$. Define $\tilde{\psib}_{j}=t_{j}\delta_{\lambda_{j}}\psib_{j}$,  where
$$t_{j}=\frac{\mathcal{Q}(\psib_j)^{\frac{p-1}{2}(s_c-1)+1}}{K(\psib_j)^{\frac{p-1}{2}s_c+1}}\qquad\mbox{and}\qquad \lambda_{j}=\frac{K(\psib_j)^{\frac{p-1}{2}}}{\mathcal{Q}(\psib_j)^{\frac{p-1}{2}}}.$$
From \eqref{funcK1}, \eqref{functionalQ} and   Lemma  \ref{lemma2}  with $a=t_{j}$ and $\lambda=\lambda _{j}$ we have
\begin{equation}\label{KQlimitado}
K(\tilde{\psib}_{j})=\mathcal{Q}(\tilde{\psib}_{j})=1 \quad \mbox{and} \quad J(\tilde{\psib}_j)=J(\psib_j).
\end{equation}
Hence,
\begin{equation}\label{convergenceP3}
\begin{split}
\frac{1}{P(\tilde{\psib}_{j})}=J(\tilde{\psib}_{j})=J(\psib_j)\to \xi_{1}>0.
\end{split}
\end{equation}
In view of \eqref{KQlimitado}, the sequence $(\tilde{\psib}_j)$ is bounded in $\mathbf{H}_{rd}^{1}$. By recalling that the embedding $H^{1}_{rd}(\R^{n}) \hookrightarrow L^{p+1}(\R^{n})$ is compact for $p$ in the subcritical regime \eqref{pcondt} (see Proposition 1.7.1 in \cite{Cazenave}), there exist a subsequence, still denoted by $(\tilde{\psib}_{j})$, and $\psib_0\in \mathbf{H}_{rd}^{1}$  such that
\begin{equation}\label{strongcnvergenceL3Rn2}
\begin{cases}
\tilde{\psib}_j\rightharpoonup \psib_{0}, \qquad\mbox{in \quad $\mathbf{H}^{1}$},\\
\tilde{\psib}_{j}\to \psib_{0}\qquad\mbox{in\quad $\mathbf{L}^{p+1}$},\\
\tilde{\psib}_{j}\to \psib_{0}\qquad\mbox{ a.e \quad in\quad $\R^n$}.
\end{cases}
 \end{equation}
The last convergence in \eqref{strongcnvergenceL3Rn2} implies that $\psib_{0}$ is non-negative and radially symmetric. In addition, since by  Lemma \ref{estdifF},
\begin{equation*}
\begin{split}
\left|P(\tilde{\psib}_{j})-P(\psib_{0})\right|
&\leq \int\left|F(\tilde{\psib}_{j})-F(\psib_{0})\right|\;dx\\
&\leq C \sum_{m=1}^{l}\sum_{k=1}^{l}\int(|\tilde{\psi}_{kj}|^{p}+|\psi_{k0}|^{p})|\tilde{\psi}_{mj}-\psi_{m0}|\;dx \\
&\leq C \sum_{m=1}^{l}\sum_{k=1}^{l}(\|\tilde{\psi}_{kj}\|^{p}_{L^{p+1}}+\|\psi_{k0}\|^{p}_{L^{p+1}})\|\tilde{\psi}_{mj}-\psi_{m0}\|_{L^{p+1}},
\end{split}
\end{equation*}
we deduce from \eqref{strongcnvergenceL3Rn2} and (\ref{convergenceP3}) that
\begin{equation}\label{relationPandalpha2}
P(\psib_{0})=\lim_{j\to \infty}P(\tilde{\psib}_{j})=\xi_{1}^{-1}>0,
\end{equation}
which means that $\psib_0\in \mathcal{P}$.

On the other hand, the lower semi-continuity of the weak convergence  gives 
\begin{equation*}
K(\psib_{0})\leq \liminf_{j}K(\tilde{\psib}_{j})=1\quad
\mbox{and}\quad
\mathcal{Q}(\psib_{0})
\leq\liminf_{j}\mathcal{Q}(\tilde{\psib}_{j})=1.
\end{equation*}
Therefore, (\ref{relationPandalpha2}) yields
\begin{equation}\label{inequJKandQ2}
\xi_{1}\leq  J(\psib_{0})=\frac{\mathcal{Q}(\psib_{0})^{\frac{p-1}{2}(1-s_c)}K(\psib_{0})^{\frac{p-1}{2}s_c+1}}{P(\psib_{0})}\leq \frac{1}{P(\psib_{0})}=\xi_{1}.
\end{equation}
From \eqref{inequJKandQ2} we conclude that 
\begin{equation*}
    J(\psib_{0})=\xi_{1}
\end{equation*}
and
\begin{equation}\label{equal1}
    K(\psib_{0})=\mathcal{Q}(\psib_{0})=1.
\end{equation}
A combination of the last assertion with \eqref{strongcnvergenceL3Rn2} also implies that
 $\tilde{\psib}_{j}\to \psib_{0}$ strongly in $\mathbf{H}^{1}$. Part (i) of the theorem is thus established.
 
  For  part (ii) we  note that for $t$ sufficiently small and $\mathbf{u}\in\mathbf{H}^1$, $(\boldsymbol{\psi}_{0}+t\mathbf{u})\in \mathcal{P}$. Thus, since $\boldsymbol{\psi}_{0} $ is a minimizer of $J$ on $\mathcal{P} $  we have  
\begin{equation*}
\left.\frac{d}{dt}\right|_{t=0}J(\boldsymbol{\psi}_{0}+t\mathbf{u})=0,
\end{equation*}
which in view of the definions of functionals $K$, $\mathcal{Q}$ and $P$ is equivalent to
\begin{multline*}
\frac{\mathcal{Q}(\boldsymbol{\psi}_{0})^{\frac{p-1}{2}(1-s_c)}K(\boldsymbol{\psi}_{0})^{\frac{p-1}{2}s_c+1}}{P(\boldsymbol{\psi}_{0})}\left\{\left(\frac{p-1}{2}s_c+1\right)\frac{K'(\boldsymbol{\psi}_{0})(\mathbf{u})}{K(\boldsymbol{\psi}_{0})}+\left[\frac{p-1}{2}(1-s_c)\right]\frac{\mathcal{Q}'(\boldsymbol{\psi}_{0})}{\mathcal{Q}(\boldsymbol{\psi}_{0})}\right\}\\
=\frac{\mathcal{Q}(\boldsymbol{\psi}_{0})^{\frac{p-1}{2}(1-s_c)}K(\boldsymbol{\psi}_{0})^{\frac{p-1}{2}s_c+1}}{P(\boldsymbol{\psi}_{0})^{2}}P'(\boldsymbol{\psi}_{0})(\mathbf{u}).
\end{multline*}
From \eqref{relationPandalpha2} and \eqref{equal1}  this yields
\begin{equation}
K'(\boldsymbol{\psi}_{0})(\mathbf{u})+\frac{2}{n}(1-s_c)\mathcal{Q}'(\boldsymbol{\psi}_{0})(\mathbf{u})=\frac{4\xi_{1 }}{n(p-1)}P'(\boldsymbol{\psi}_{0})(\mathbf{u}).\label{relationKQandPgeral}
\end{equation}
Next, define  $\psib=t_{0}\delta_{\lambda_{0}}\psib_{0}$ with 
$$t_{0}=\displaystyle \left[\frac{2 \xi_{1} }{2(p+1)-n(p-1)}\right]^{1/(p-1)}\qquad\mbox{and}\qquad \lambda_{0}=\left[\frac{2}{ n}(1-s_c)\right]^{1/2}.$$
 We claim that $\psib$ is a solution of (\ref{systemelip}). Indeed,   for any  $\mathbf{u}\in \mathbf{H}^{1}$ in view of  \eqref{relationKQandPgeral},
\begin{equation*}
\begin{split}
\quad I'(\boldsymbol{\psi})(\boldsymbol{u})
&=\frac{1}{2}\left[K'(\boldsymbol{\psi})(\boldsymbol{u})+\mathcal{Q}'(\boldsymbol{\psi})(\boldsymbol{u})\right]-P'(\boldsymbol{\psi})(\boldsymbol{u})\\
&=\frac{1}{2}\left[K'(t_{0}\delta_{\lambda_{0}}\boldsymbol{\psi}_{0})(\boldsymbol{u})+ \mathcal{Q}'(t_{0}\delta_{\lambda_{0}}\boldsymbol{\psi}_{0})(\boldsymbol{u})\right]-P'(t_{0}\delta_{\lambda_{0}}\boldsymbol{\psi}_{0})(\boldsymbol{u})\\
&=\frac{t_{0}}{2}\left[K'(\delta_{\lambda_{0}}\boldsymbol{\psi}_{0})(\boldsymbol{u})+ \mathcal{Q}'(\delta_{\lambda_{0}}\boldsymbol{\psi}_{0})(\boldsymbol{u})\right]-t_{0}^{p}P'(\delta_{\lambda_{0}}\boldsymbol{\psi}_{0})(\boldsymbol{u})\\
&=\frac{t_{0}}{2}\left[\lambda_{0}^{n-2}K'(\boldsymbol{\psi}_{0})(\delta_{\lambda_{0}^{-1}}\boldsymbol{u})+\lambda_{0}^{n}\mathcal{Q}'(\boldsymbol{\psi}_{0})(\delta_{\lambda_{0}^{-1}}\boldsymbol{u})\right]-t_{0}^{p}\lambda_{0}^{n}P'(\boldsymbol{\psi}_{0})(\delta_{\lambda_{0}^{-1}}\boldsymbol{u})\\
&=\frac{t_{0}\lambda_{0}^{n-2}}{2}\left[K'(\boldsymbol{\psi}_{0})(\delta_{\lambda_{0}^{-1}}\boldsymbol{u})+\lambda_{0}^{2} \mathcal{Q}'(\boldsymbol{\psi}_{0})(\delta_{\lambda_{0}^{-1}}\boldsymbol{u})-2t_{0}^{p-1}\lambda_{0}^{2}P'(\boldsymbol{\psi}_{0})(\delta_{\lambda_{0}^{-1}}\boldsymbol{u})\right]\\
&=\frac{t_{0}\lambda_{0}^{n-2}}{2}\left[K'(\boldsymbol{\psi}_{0})(\delta_{\lambda_{0}^{-1}}\boldsymbol{u})+\frac{2(p+1)-n(p-1)}{n(p-1)}\mathcal{Q}'(\boldsymbol{\psi}_{0})(\delta_{\lambda_{0}^{-1}}\boldsymbol{u})\right.\\
&\quad-\left.\frac{4\xi_{1}}{n(p-1)}P'(\boldsymbol{\psi}_{0})(\delta_{\lambda_{0}^{-1}}\boldsymbol{u})\right]\\
&=0.
\end{split}
\end{equation*}

Now from Lemmas \ref{identitiesfunctionals} and \ref{lemma2}, we have that $\psib$ is also a critical point of $J$ with $J(\psib)=J(\psib_{0})$. Since $\psib_{0}$ is a minimizer of $J$, so is $\psib$. Another application of Lemma \ref{identitiesfunctionals} gives that $\psib$ is a ground state of (\ref{systemelip}). To see that $\psib$ is positive, we note that
\begin{equation*}
\Delta \psi_{k} -\frac{b_{k}}{\gamma_{k}} \psi_{k}=-\frac{1}{\gamma_{k}}f_{k}(\psib)\leq 0,
\end{equation*}
because $\gamma_{k}>0$,  $\psi_{k}$ are non-negatives  and $f_{k}$ satisfy \ref{H7}. Therefore by the strong maximum principle (see, for instance, \cite[Theorem 3.5]{gil})  we obtain the positiveness of $\psib$.
 
Finally, we will prove   (\ref{inffunctionalJ}). Indeed, if $\psib$ is as in part (ii), Lemma \ref{identitiesfunctionals} implies,
\begin{equation*}
\begin{split}
    \xi_{1}=J(\psib)
&=\frac{(p-1)n^{\frac{p-1}{2}s_c+1}}{2}\left[2(1-s_c)\right]^{\frac{p-1}{2}(1-s_c)}I(\psib)^{\frac{p-1}{2}}\\
&=\frac{(p-1)n^{\frac{p-1}{2}s_c+1}}{2}\left[2(1-s_c)\right]^{\frac{(1-p)}{2}s_c}\mathcal{Q}(\psib)^{\frac{p-1}{2}}.
\end{split}
\end{equation*}
Therefore, if $\tilde{\psib}\in\mathcal{G}(\omega,\boldsymbol{\beta})$, we get
$$\xi_{1}=\frac{(p-1)n^{\frac{p-1}{2}s_c+1}}{2}\left[2(1-s_c)\right]^{\frac{(1-p)}{2}s_c}\mathcal{Q}(\tilde{\psib})^{\frac{p-1}{2}},$$
completing the proof of the theorem.
\end{proof}

As a consequence of Theorem \ref{thm:existenceGSJgeral} we can obtain the optimal constant in the Gagliardo-Nirenberg-type inequality \eqref{GNE2}. 

\begin{coro}\label{corollarybestconstant}
Let $p$ satisfy \eqref{pcondt} and $s_{c}$ be defined as \eqref{critexp}.  The inequality 
\begin{equation*}
P(\mathbf{u})\leq C_{opt}\mathcal{Q}(\mathbf{u})^{\frac{p-1}{2}(1-s_c)}K(\mathbf{u})^{{\frac{p-1}{2}s_c+1}},
\end{equation*}
holds, for any $\mathbf{u}\in \mathcal{P}$, with
\begin{equation*}
C_{opt}=\frac{2}{(p-1)n^{\frac{p-1}{2}s_c+1}}\left[2(1-s_c)\right]^{\frac{(p-1)}{2}s_c}\frac{1}{\mathcal{Q}(\psib)^{\frac{p-1}{2}}},
\end{equation*}
where $\psib \in \mathcal{G}(\omega,\boldsymbol{\beta})$.
\end{coro}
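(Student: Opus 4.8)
The plan is to read the corollary off directly from the variational characterization of $\xi_1$ established in Theorem \ref{thm:existenceGSJgeral}; no new analysis is required, only an algebraic inversion. First I would recall that
\[
\xi_1=\inf_{\psib\in\mathcal{P}}J(\psib),\qquad
J(\mathbf{u})=\frac{\mathcal{Q}(\mathbf{u})^{\frac{p-1}{2}(1-s_c)}K(\mathbf{u})^{\frac{p-1}{2}s_c+1}}{P(\mathbf{u})},
\]
so that for every $\mathbf{u}\in\mathcal{P}$ (in particular $P(\mathbf{u})>0$) the inequality $J(\mathbf{u})\geq\xi_1$ holds. Since $P(\mathbf{u})>0$, this rearranges at once to
\[
P(\mathbf{u})\leq \frac{1}{\xi_1}\,\mathcal{Q}(\mathbf{u})^{\frac{p-1}{2}(1-s_c)}K(\mathbf{u})^{\frac{p-1}{2}s_c+1},
\]
which is precisely the asserted inequality with $C_{opt}=\xi_1^{-1}$.

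Next I would substitute the explicit value of $\xi_1$. Theorem \ref{thm:existenceGSJgeral} shows the infimum is attained at a ground state and, by the characterization \eqref{inffunctionalJ}, for any $\psib\in\mathcal{G}(\omega,\boldsymbol{\beta})$ one has
\[
\xi_1=\frac{(p-1)n^{\frac{p-1}{2}s_c+1}}{2}\left[2(1-s_c)\right]^{\frac{(1-p)}{2}s_c}\mathcal{Q}(\psib)^{\frac{p-1}{2}}.
\]
Inverting this identity yields exactly the stated formula for $C_{opt}$. Because \eqref{inffunctionalJ} shows $\mathcal{Q}(\psib)$ takes the same value on every element of $\mathcal{G}(\omega,\boldsymbol{\beta})$, the resulting constant is independent of the particular ground state chosen, so the expression for $C_{opt}$ is well defined.

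Finally, to justify the word \emph{optimal} I would observe that equality is realized: since $\xi_1$ is attained at the minimizer $\psib_0$, equivalently at the ground state $\psib$ obtained from it by the scaling in the proof of Theorem \ref{thm:existenceGSJgeral}, one has $J(\psib)=\xi_1$, that is $P(\psib)=C_{opt}\,\mathcal{Q}(\psib)^{\frac{p-1}{2}(1-s_c)}K(\psib)^{\frac{p-1}{2}s_c+1}$; hence no smaller constant can work and $C_{opt}=\xi_1^{-1}$ is sharp. There is essentially no hard step here: the only point requiring care is the bookkeeping of exponents when inverting \eqref{inffunctionalJ}, together with the remark that the homogeneities of $\mathcal{Q}$, $K$ and $P$ make $J$ invariant under the scaling $\mathbf{u}\mapsto a\delta_{\lambda}\mathbf{u}$ (Lemma \ref{lemma2}(i)), which is what guarantees the bound is scale-independent and genuinely attained.
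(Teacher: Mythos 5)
Your proposal is correct and is essentially the paper's own (implicit) argument: the corollary is stated as an immediate consequence of Theorem \ref{thm:existenceGSJgeral}, obtained by rearranging $J(\mathbf{u})\geq\xi_1$ for $\mathbf{u}\in\mathcal{P}$ and substituting the value of $\xi_1$ computed at the end of that theorem's proof, namely $\xi_1=\frac{(p-1)n^{\frac{p-1}{2}s_c+1}}{2}[2(1-s_c)]^{\frac{(1-p)}{2}s_c}\mathcal{Q}(\psib)^{\frac{p-1}{2}}$, so that $C_{opt}=\xi_1^{-1}$. Your additional observations on well-definedness across $\mathcal{G}(\omega,\boldsymbol{\beta})$ and on sharpness via attainment at the minimizer are consistent with the paper and require no further justification.
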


 \section{Proof of Theorems \ref{thm:globalexistencecondn=51} and \ref{thm:globalexistencecondn=52}}\label{sec.gsbu}
 In this section we  give a proof of the dichotomy global solutions versus blow-up in finite time presented in Theorems \ref{thm:globalexistencecondn=51} and \ref{thm:globalexistencecondn=52}.  The sharp criterion for global well-posedness will be given in terms of such ground states.  

  \subsection{Conservation of the charge and energy}

  From the alternative blow-up results presented at the final of Theorem \ref{locexistH1n=6} we see that in order to   to get global solutions we need to find an \textit{a priori}  estimate for the  $L^{2}$ and  $H^{1}$ norms of local solution. Thus, the next step  is to  extend globally-in-time the solutions given by Theorems \ref{localexistenceL2} and  \ref{localexistenceH1}. A common strategy to do so is using the conservation of mass and energy. Then, in the following,  we will refer to this property. \\

 The conservation of the mass defined in \eqref{conservationcharge} is a consequence of conditions \textnormal{\ref{H3}} and \textnormal{\ref{H4}}. We multiply \eqref{system1}  by $\overline{u}_{k}$, where $k=1,\ldots, l$, then integrating by parts in $x$ and taking the imaginary part of the result we get after summing over $k$
\begin{equation*}
\frac{d}{dt}\left(\sum_{k=1}^{l}\frac{\sigma_{k}\alpha_{k}}{2}\|  u_{k}(t)\|_{L^{2}}^{2}\right)=-2\mathrm{Im}\int \sum_{k=1}^{l}\sigma_{k}f_{k}(\ub)\overline{u}_{k}\;dx=0,
\end{equation*}
 where we have used in \textnormal{\ref{H4}} in the last equality. Thus, the mass of    system \eqref{system1} is a conserved quantity.\\

An immediate consequence of the conservation of the mass is the extension of the local solutions in time in $L^2$. More precisely, we have 

\begin{teore}\label{L2normconserved}
	Let $1< p< 1+\frac{4}{n}$. Assume that   \textnormal{\ref{H1}-\ref{H4}} hold. Then for any $\mathbf{u}_0 \in \mathbf{L}^2$, the corresponding solution  $\mathbf{u}$ with maximal time interval of existence $I$, can be extended globally in time, that is $I=(-\infty,\infty)$. Moreover,
	\begin{equation*}
	Q(\mathbf{u}(t))=Q(\mathbf{u}_0),\qquad\forall t\in \R.
	\end{equation*}
\end{teore}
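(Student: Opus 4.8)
The plan is to combine the local theory of Theorem \ref{localexistenceL2}, in which the existence time depends only on the size of the data in $\mathbf{L}^2$, with the conservation of the mass $Q$ established just above, and then to invoke the blow-up alternative to rule out finite-time breakdown.

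First I would record that, since $\sigma_k,\alpha_k>0$ for every $k$, the conserved quantity $Q$ is comparable to the square of the full $\mathbf{L}^2$-norm: with $c:=\tfrac12\min_k\sigma_k\alpha_k$ and $C:=\tfrac12\max_k\sigma_k\alpha_k$ one has
\[
c\,\|\mathbf{u}(t)\|_{\mathbf{L}^2}^2\leq Q(\mathbf{u}(t))\leq C\,\|\mathbf{u}(t)\|_{\mathbf{L}^2}^2 .
\]
Granting the identity $Q(\mathbf{u}(t))=Q(\mathbf{u}_0)$ derived before the statement, this yields the uniform bound
\[
\|\mathbf{u}(t)\|_{\mathbf{L}^2}^2\leq \tfrac{1}{c}\,Q(\mathbf{u}_0)\qquad\text{for every }t\text{ in the maximal interval }I.
\]

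Next, I would use that Theorem \ref{localexistenceL2} provides, for each $\rho>0$, a uniform existence time $T(\rho)>0$ valid for every datum with $\mathbf{L}^2$-norm at most $\rho$. Setting $\rho^2:=\tfrac{1}{c}Q(\mathbf{u}_0)$, the a priori bound gives $\|\mathbf{u}(t)\|_{\mathbf{L}^2}\leq\rho$ for all $t\in I$. Starting from $\mathbf{u}_0$ we obtain a solution on $[-T(\rho),T(\rho)]$; since $\|\mathbf{u}(\pm T(\rho))\|_{\mathbf{L}^2}\leq\rho$, we may restart from $\mathbf{u}(\pm T(\rho))$ and extend by the \emph{same} step $T(\rho)$ to $[-2T(\rho),2T(\rho)]$. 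Iterating, the solution is defined on $[-mT(\rho),mT(\rho)]$ for every $m\in\N$, hence on all of $\R$. Equivalently, the blow-up alternative stated after Theorem \ref{locexistH1n=6} cannot trigger, because $\|\mathbf{u}(t)\|_{\mathbf{L}^2}$ stays bounded by $\rho$ and thus cannot diverge at any finite $T^*$ or $-T_*$. This gives $I=(-\infty,\infty)$, and the mass identity then holds for all $t\in\R$ by construction.

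The step that requires care is the rigorous justification of the mass identity for merely $\mathbf{L}^2$ solutions: the formal computation (multiplying \eqref{system1} by $\overline{u}_k$, integrating by parts, and using \ref{H3}--\ref{H4}) presupposes regularity that an $\mathbf{L}^2$ solution need not possess, and I expect this to be the main obstacle. I would resolve it by a standard approximation argument: regularize the data (e.g. by mollification or frequency truncation) to obtain smoother solutions for which the manipulation is legitimate, establish the identity at that level, and pass to the limit using the contraction estimate on $\mathbf{X}(I)$ together with the growth bound \eqref{growthp} of Lemma \ref{limfk} to control the nonlinear terms. Alternatively one works directly in the Duhamel formulation on $\mathbf{X}(I)$ and uses the Strichartz-based estimates to justify the computation. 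Once the identity holds for $\mathbf{L}^2$ data, the extension argument above closes the proof.
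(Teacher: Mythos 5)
Your proposal is correct and follows essentially the same route as the paper, which derives the mass conservation formally just before the statement and then treats the global extension as an immediate consequence of the fact that the local existence time in Theorem \ref{localexistenceL2} depends only on the $\mathbf{L}^2$-size of the data, so the solution can be continued by a uniform time step. Your additional remark about justifying the conservation law rigorously for rough $\mathbf{L}^2$ solutions via regularization is a point the paper glosses over, but it does not change the argument.
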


 Now, we discuss the conservation of the energy. From \textnormal{\ref{H3}} and the chain rule, we have

\begin{equation}\label{conservener1}
\begin{split}
\mathrm{Re}\left[\sum_{k=1}^{l}f_{k}\partial_{t}\overline{u}_{k}\right]&=\mathrm{Re}\left[\frac{d}{dt}F(\mathbf{u}(t))\right].
\end{split}
\end{equation}
Next, multiplying  \eqref{system1} by $\partial_t \overline{u}_k$, adding its complex conjugate and integrating by parts in $x$ we get after taking the sum over $k$
\begin{equation}\label{ener1}
\frac{d}{dt}\left(\sum_{k=1}^{l}\gamma_{k}\| \nabla u_{k}\|_{L^{2}}^{2}+\sum_{k=1}^{l}\beta_{k}\|  u_{k}\|_{L^{2}}^{2}\right)=2\int\mathrm{Re}\left[\sum_{k=1}^{l}  f_{k}(\ub)\partial_{t}\overline{u}_{k}\right]\;dx.
 \end{equation}
combining  \eqref{conservener1} and \eqref{ener1} we obtain 
\begin{equation*}
    \frac{d}{dt}\left(\sum_{k=1}^{l}\gamma_{k}\|\nabla u_{k}(t)\|_{L^2}^{2}+\sum_{k=1}^{l}\beta_{k}\|u_{k}(t)\|_{L^2}^{2}
    -2\mathrm{Re}\int F(\mathbf{u}(t))\;dx\right)=0.
\end{equation*}
Hence, the energy
given by 
\begin{equation*}
	E(\mathbf{u}(t))=\sum_{k=1}^{l}\gamma_{k}\|\nabla u_{k}(t)\|_{L^2}^{2}+\sum_{k=1}^{l}\beta_{k}\|u_{k}(t)\|_{L^2}^{2}
    -2\mathrm{Re}\int F(\mathbf{u}(t))\;dx,
\end{equation*}
 is  conserved.

 \subsection{Properties under mass resonance condition}\label{virialindet}
This section is devoted to stating  some useful virial-type identities satisfied by the solutions of system \eqref{system1} as well as an invariance by a pseudo-conformal-type transform of these solutions. These properties are established assuming the mass-resonance condition. 

We start discussing which result we gain by assuming \eqref{RC}.

\begin{obs}\label{obsmassreso}
    Considering assumption \textnormal{\ref{H3}} and \eqref{RC}, we can repeat the argument in the proof of Lemma \ref{H34impGC} and conclude that  for any $\theta \in \R$ and $\zb \in \C^{l}$ we have for $k=1,\ldots,l$

  \begin{itemize}
      \item[i)]  the invariance 
     \begin{equation*}
\mathrm{Re}\,F\left(e^{i\frac{\alpha_{1}}{\gamma_1}\theta  }z_{1},\ldots,e^{i\frac{\alpha_{l}}{\gamma_l}\theta  }z_{l}\right)=\mathrm{Re}\,F(\zb),
\end{equation*}
\item[ii)]
and the Gauge condition
     \begin{equation*}
    f_{k}\left(e^{i\frac{\alpha_{1}}{\gamma_1}\theta }z_{1},\ldots,e^{i\frac{\alpha_{l}}{\gamma_l}\theta }z_{l}\right)=e^{i\frac{\alpha_{k}}{\gamma_k}\theta }f_{k}(z_{1},\ldots,z_{l}).
\end{equation*}

  \end{itemize}

\end{obs}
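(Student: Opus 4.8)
The plan is to reproduce, essentially verbatim, the proof of Lemma \ref{H34impGC}, replacing the rotation frequencies $\frac{\sigma_k}{2}$ there by the quotients $\frac{\alpha_k}{\gamma_k}$ and invoking the mass-resonance identity \eqref{RC} wherever that proof used \ref{H4*}. The point is that both \eqref{RC} and \ref{H4*} assert the vanishing of an imaginary part of the form $\mathrm{Im}\sum_k c_k f_k(\zb)\overline{z}_k$ for all $\zb\in\C^l$, differing only in the coefficients $c_k$, and that the frequencies in the phase rotation are exactly these coefficients; assumption \ref{H3} supplies, through \eqref{fksegexp}, the link between $f_k$ and $\mathrm{Re}\,F$ that makes the computation close.

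For part i) I would fix $\zb\in\C^l$, set $w_k(\theta)=e^{i\frac{\alpha_k}{\gamma_k}\theta}z_k$, and study the real-valued function $g(\theta):=\mathrm{Re}\,F(\mathbf{w}(\theta))$. Differentiating in $\theta$ by the chain rule for the Wirtinger operators, using $\frac{dw_k}{d\theta}=i\frac{\alpha_k}{\gamma_k}w_k$ together with the representation \eqref{fksegexp}, i.e. $\frac{\partial\,\mathrm{Re}\,F}{\partial\overline{w}_k}=\tfrac12 f_k(\mathbf{w})$, and the identity $\frac{\partial\,\mathrm{Re}\,F}{\partial w_k}=\overline{\frac{\partial\,\mathrm{Re}\,F}{\partial\overline{w}_k}}=\tfrac12\overline{f_k(\mathbf{w})}$ (valid because $\mathrm{Re}\,F$ is real-valued), one finds that all terms collapse into
\begin{equation*}
g'(\theta)=\mathrm{Im}\sum_{k=1}^{l}\frac{\alpha_k}{\gamma_k}f_k(\mathbf{w}(\theta))\,\overline{w_k(\theta)}.
\end{equation*}
Since $\mathbf{w}(\theta)\in\C^l$ for every $\theta$, the right-hand side vanishes identically by the mass-resonance condition \eqref{RC}. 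Hence $g$ is constant, and evaluating at $\theta=0$ gives the claimed invariance $\mathrm{Re}\,F(\mathbf{w}(\theta))=\mathrm{Re}\,F(\zb)$.

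For part ii) I would differentiate the invariance just established. Applying $2\frac{\partial}{\partial\overline{z}_m}$ to both sides of $\mathrm{Re}\,F(\mathbf{w}(\theta))=\mathrm{Re}\,F(\zb)$ and using \eqref{fksegexp} on the right yields $f_m(\zb)$. On the left, since $w_k$ is holomorphic in $z_k$ we have $\frac{\partial w_k}{\partial\overline{z}_m}=0$ and $\frac{\partial\overline{w}_k}{\partial\overline{z}_m}=e^{-i\frac{\alpha_k}{\gamma_k}\theta}\delta_{km}$, so only the $\overline{w}_m$-derivative survives and, again by \eqref{fksegexp}, the left-hand side equals $e^{-i\frac{\alpha_m}{\gamma_m}\theta}f_m(\mathbf{w}(\theta))$. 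Equating the two expressions gives $f_m(\mathbf{w}(\theta))=e^{i\frac{\alpha_m}{\gamma_m}\theta}f_m(\zb)$, which is precisely the Gauge-type identity asserted in ii).

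I do not anticipate a genuine obstacle here, as the argument is a routine adaptation of Lemma \ref{H34impGC}; the statement is flagged as a \emph{Remark} for exactly this reason. The only points demanding care are the bookkeeping with the Wirtinger derivatives—in particular the reality of $\mathrm{Re}\,F$, which forces $\partial_{w_k}\mathrm{Re}\,F$ and $\partial_{\overline{w}_k}\mathrm{Re}\,F$ to be complex conjugates—and the observation that \eqref{RC} is being applied not at $\zb$ but at the rotated point $\mathbf{w}(\theta)$, which is legitimate precisely because \eqref{RC} is assumed to hold on all of $\C^l$.
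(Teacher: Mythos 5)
Your proposal is correct and follows exactly the route the paper intends: the paper's justification is simply ``repeat the argument of Lemma \ref{H34impGC}'', whose proof consists of combining the representation \eqref{fksegexp} with the chain rule, and your Wirtinger-derivative computation of $g'(\theta)$ (closed by applying \eqref{RC} at the rotated point) followed by differentiation in $\overline{z}_m$ is precisely that argument with $\frac{\sigma_k}{2}$ replaced by $\frac{\alpha_k}{\gamma_k}$. No discrepancy to report.
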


\subsubsection{Virial Identities}

As we said, a very important tool for constructing blow-up solutions are the virial identities. To establish such identities we start with the variance function. The first result gives conditions under which the variance is differentiable. We will start by introducing the following space
\begin{equation*}
\Sigma=\{\mathbf{u}\in \mathbf{H}^{1}; \;x\mathbf{u}\in \mathbf{L}^{2}\}.
\end{equation*}
Here the product $x\mathbf{u}$ must be understood as $(xu_1,\ldots,xu_l)$. In particular,
$$
\|x\mathbf{u}\|_{\mathbf{L}^2}=\sum_{k=1}^l\int|x|^2|u_k|^2dx.
$$
We note that $\Sigma$ equipped with the norm
\begin{equation*}
\|\mathbf{u}\|_{\Sigma}=\|\mathbf{u}\|_{\mathbf{H}^{1}}+\||\cdot|\mathbf{u}\|_{\mathbf{L}^{2}},
\end{equation*}
is a Hilbert space.

\begin{teore}\label{persistL2wheit}  Assume  system \eqref{system1} satisfies the mass resonance condition \eqref{RC} and $\mathbf{u}_0\in \Sigma$. Let $\mathbf{u}$ be the corresponding  local solution given by Theorems \ref{localexistenceH1} and \ref{locexistH1n=6}. Then, the function $t\to |\cdot|\mathbf{u}(\cdot,t)$ belongs to $\mathcal{C}(I,\mathbf{L}^{2})$. Moreover, the function \begin{equation}
\label{fuctV}
 t\to V(t)=\sum_{k=1}^{l}\frac{\alpha_{k}^{2}}{\gamma_{k}}\|xu_{k}(t)\|_{L^2}^{2}=\sum_{k=1}^{l}\frac{\alpha_{k}^{2}}{\gamma_{k}}\int|x|^2|u_{k}(x,t)|^{2}\;dx
 \end{equation}
  is in $\mathcal{C}^{2}(I)$, 
 \begin{equation*}
 V'(t)=4\sum_{k=1}^{l}\alpha_{k}\mathrm{Im}\int\nabla u_{k}\cdot x\overline{u}_{k}\,dx,
 \end{equation*}
 and
 \begin{equation}\label{secderV}
 V''(t)=2 n(p-1)E(\mathbf{u}_0)-2 n (p-1) L(\mathbf{u})+2 (4-np+n)K(\mathbf{u}), \qquad \mbox{for all}\;\; t\in I,
 \end{equation}
 where $K$ and $L$ are defined in \eqref{funcK1}.
\end{teore}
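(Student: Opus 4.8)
The plan is to argue in two stages: first show that the weighted quantity persists and is continuous in time, so that $V(t)$ in \eqref{fuctV} is finite and twice differentiable on all of $I$; then compute $V'$ and $V''$ rigorously, the mass-resonance condition \eqref{RC} being exactly what makes the otherwise uncontrollable nonlinear terms vanish.

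For the persistence, since $|x|^2$ is unbounded I would not differentiate $V$ directly. Instead, in the spirit of Lemma \ref{lemafunctionchi}, I take a smooth weight $\theta_R(x)=R^2\theta(x/R)$ with $\theta(y)=|y|^2$ for $|y|\le 1$ and constant for $|y|\ge 2$, so that $\theta_R\nearrow|x|^2$ and, uniformly in $R$, $|\nabla\theta_R(x)|\le C|x|$ and $|\nabla\theta_R|^2\le C\theta_R$. Setting $V_R(t)=\sum_k\frac{\alpha_k^2}{\gamma_k}\int\theta_R|u_k|^2\,dx$, which is finite and differentiable for an $\mathbf{H}^1$ solution, I multiply \eqref{system1} by $\overline u_k$, take imaginary parts, weight by $\theta_R\alpha_k^2/\gamma_k$ and sum. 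This gives $\partial_t|u_k|^2=\frac{2}{\alpha_k}\big[-\gamma_k\,\mathrm{Im}(\overline u_k\Delta u_k)-\mathrm{Im}(\overline u_k f_k)\big]$; the $\beta_k$ term drops since $\mathrm{Im}|u_k|^2=0$, and after summing with the weights the nonlinear part is $\int\theta_R\,\mathrm{Im}\sum_k\frac{\alpha_k}{\gamma_k}f_k(\mathbf{u})\overline u_k\,dx=0$ by \eqref{RC}. Integrating by parts in the Laplacian term via $\mathrm{Im}(\overline u_k\Delta u_k)=\nabla\cdot\mathrm{Im}(\overline u_k\nabla u_k)$ and using $|\nabla\theta_R|\le C\sqrt{\theta_R}$ with Cauchy--Schwarz yields $|V_R'(t)|\le C\,V_R(t)^{1/2}\,\|\nabla\mathbf{u}(t)\|_{\mathbf{L}^2}$. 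Since $\|\nabla\mathbf{u}\|_{\mathbf{L}^2}$ is bounded on compact subintervals by the local $H^1$-theory (Theorems \ref{localexistenceH1} and \ref{locexistH1n=6}), a Gronwall argument on $\sqrt{V_R}$ bounds it uniformly in $R$; letting $R\to\infty$ (monotone convergence) shows $V(t)<\infty$ on $I$, i.e.\ $\mathbf{u}(t)\in\Sigma$, and applying the same estimate to differences gives the continuity of $t\mapsto|\cdot|\mathbf{u}(\cdot,t)$ in $\mathbf{L}^2$.

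Once persistence is known the formal computations are licit. For $V'$ I use the above expression for $\partial_t|u_k|^2$ with $\theta_R$ replaced by $|x|^2$: the nonlinear part is again killed by \eqref{RC}, and integrating by parts with $\nabla|x|^2=2x$ produces exactly $V'(t)=4\sum_k\alpha_k\,\mathrm{Im}\int x\cdot\nabla u_k\,\overline u_k\,dx$. For $V''$ I differentiate this momentum-type quantity once more, substituting $\partial_t u_k=\frac{i}{\alpha_k}(\gamma_k\Delta u_k-\beta_k u_k+f_k)$. The kinetic part, after integration by parts with $\nabla\cdot x=n$, collapses to $8K(\mathbf{u})$, the $\beta_k$ contributions cancel, and the nonlinear part is handled using Lemma \ref{estdifF}(ii)--(iii), namely $\mathrm{Re}\sum_k f_k\nabla\overline u_k=\mathrm{Re}\,\nabla F$ and $\mathrm{Re}\sum_k f_k\overline u_k=(p+1)\mathrm{Re}\,F$, together with $\int x\cdot\nabla\,\mathrm{Re}\,F\,dx=-n\int\mathrm{Re}\,F\,dx=-nP(\mathbf{u})$; this produces the nonlinear contribution $-4n(p-1)P(\mathbf{u})$. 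Hence $V''(t)=8K(\mathbf{u})-4n(p-1)P(\mathbf{u})$, and inserting the conserved energy in the form $2P(\mathbf{u})=K(\mathbf{u})+L(\mathbf{u})-E(\mathbf{u}_0)$ gives precisely \eqref{secderV}. Here again the extra time-derivative term appearing in the general identity \eqref{V2mwmr} is absent because its integrand vanishes pointwise by \eqref{RC}.

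The main obstacle is the rigorous justification in the first stage: because $|x|^2$ is unbounded, none of the manipulations are valid until one knows $x\mathbf{u}(t)\in\mathbf{L}^2$, and this must be bootstrapped from the equation itself. The truncation $\theta_R$ together with the two uniform bounds $|\nabla\theta_R|\le C|x|$ and $|\nabla\theta_R|^2\le C\theta_R$ is exactly what closes the estimate independently of $R$. The subsequent computations of $V'$ and $V''$ are then routine, and if desired they too can be carried out first at the level of $\theta_R$ and passed to the limit, or justified by approximating $\mathbf{u}_0\in\Sigma$ by smooth compactly supported data.
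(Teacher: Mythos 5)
Your argument is correct and follows essentially the same route the paper takes: the paper's proof simply defers to Cazenave's Proposition 6.5.1 (truncated weight, Gronwall on $\sqrt{V_R}$, then passage to the limit) together with the mass-resonance condition to kill the nonlinear terms and \ref{H5*} (via Lemma \ref{estdifF} (ii)--(iii)) to close the $V''$ identity, which is precisely what you carry out in detail. Your final bookkeeping $V''=8K(\mathbf{u})-4n(p-1)P(\mathbf{u})$ combined with $2P=K+L-E(\mathbf{u}_0)$ reproduces \eqref{secderV} exactly, so the proposal is a complete version of the proof the paper only sketches by citation.
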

\begin{proof}
For a single nonlinear equation	in  \cite[Proposition 6.5.1]{Cazenave},    a proof can be found. Following these ideas a  proof of Theorem \ref{persistL2wheit}  can be achieved. A different approach, employed in reference  \cite{Corcho}, which uses the Hamiltonian structure of the system can also  be used. In fact, in reference \cite{NoPa2} this technique  was adapted  for system \eqref{system1}, when $p=2$. We  point out that in order to get $V'$ we need to use Remark \ref{obsmassreso} i) (which as we said is a consequence of mass resonance \eqref{RC}) and to get $V''$  is fundamental assumption \ref{H5*}. 
\end{proof}

By integrating twice \eqref{secderV} with respect to $t$ we obtain the following.

\begin{coro}[Virial identity]\label{conservationlawweightedspace}
	 Assume  system \eqref{system1} satisfies the mass resonance condition \eqref{RC}. Let  $\mathbf{u}_0\in \mathbf{H}^1$ and  $\mathbf{u}\in \Sigma$ be the corresponding solution given by Theorems \ref{localexistenceH1}, \ref{locexistH1n=6} and \ref{persistL2wheit}. Then
		\begin{multline*}
	Q\left(x\mathbf{u}(t)\right)=Q\left(x\mathbf{u}_0\right)+P_{0}t+n(p-1)E_{0}t^{2}-2 n(p-1)\int_{0}^{t}(t-s)L(\mathbf{u}(s))\; ds\\
	+2(4-np+n)\int_{0}^{t}(t-s)K(\mathbf{u}(s))\;ds,
	\end{multline*}
for all $t\in I$, where
$$P_{0}=4\sum_{k=1}^{l}\alpha_{k}\mathrm{Im}\int\nabla u_{k0}\cdot x\overline{u}_{k0}\;dx.$$
\end{coro}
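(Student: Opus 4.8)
The plan is to integrate the second-derivative identity \eqref{secderV} twice in time, starting from the two initial conditions $V(0)=Q(x\mathbf{u}_0)$ and $V'(0)=P_0$. The statement of Theorem \ref{persistL2wheit} already guarantees that $V\in\mathcal{C}^2(I)$ and supplies the exact form of $V''$, so the work reduces to a purely one-variable calculus argument on the interval $I$; the mass-resonance hypothesis \eqref{RC} is inherited directly, since it is precisely what lets us invoke Theorem \ref{persistL2wheit} in the first place (the non-resonant correction terms in \eqref{V1mwmr} and \eqref{V2mwmr} having been discarded).

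First I would record the initial data. By the definition \eqref{fuctV} of $V$ and \eqref{conservationcharge} we have $V(0)=Q(x\mathbf{u}_0)$, and from the expression for $V'(t)$ in Theorem \ref{persistL2wheit} evaluated at $t=0$ we get
\begin{equation*}
V'(0)=4\sum_{k=1}^{l}\alpha_{k}\mathrm{Im}\int\nabla u_{k0}\cdot x\overline{u}_{k0}\;dx=P_{0}.
\end{equation*}
Next I would integrate \eqref{secderV} once. Writing the constant part $2n(p-1)E(\mathbf{u}_0)$ (which is genuinely constant by conservation of energy) separately from the two time-dependent terms, a single integration from $0$ to $t$ gives
\begin{equation*}
V'(t)=P_{0}+2n(p-1)E(\mathbf{u}_0)\,t-2n(p-1)\int_0^t L(\mathbf{u}(s))\,ds+2(4-np+n)\int_0^t K(\mathbf{u}(s))\,ds.
\end{equation*}

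Then I would integrate a second time from $0$ to $t$. The only non-elementary point is that the double integrals $\int_0^t\!\int_0^{s}L(\mathbf{u}(\tau))\,d\tau\,ds$ and the analogous one for $K$ must be rewritten as single integrals; applying Fubini's theorem (or the standard Cauchy formula for repeated integration) converts $\int_0^t\!\int_0^{s}g(\tau)\,d\tau\,ds$ into $\int_0^t (t-s)\,g(s)\,ds$. Carrying this out for both the $L$ and $K$ terms, while the constant-energy term integrates to $n(p-1)E_0 t^2$ and the linear term $P_0$ integrates to $P_0 t$, yields exactly
\begin{multline*}
V(t)=Q(x\mathbf{u}_0)+P_0 t+n(p-1)E_0 t^2-2n(p-1)\int_0^t(t-s)L(\mathbf{u}(s))\,ds\\
+2(4-np+n)\int_0^t(t-s)K(\mathbf{u}(s))\,ds,
\end{multline*}
and since $V(t)=Q(x\mathbf{u}(t))$ by \eqref{fuctV}, this is the claimed identity. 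The main (and essentially only) subtlety is the Fubini/repeated-integration step that collapses the iterated integrals into the kernel $(t-s)$; everything else is bookkeeping of the constants coming from the initial data and the conserved energy. Regularity is not an obstacle here because the differentiability of $V$ and the continuity of $s\mapsto K(\mathbf{u}(s)),L(\mathbf{u}(s))$ on $I$ are already furnished by the well-posedness theory and Theorem \ref{persistL2wheit}.
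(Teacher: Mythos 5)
Your argument is correct and is exactly the route the paper takes: the paper's proof consists of the single remark that one integrates \eqref{secderV} twice in $t$, and your write-up simply fills in the details (the initial values $V(0)$ and $V'(0)=P_0$, conservation of energy to treat $E(\mathbf{u}_0)$ as a constant, and the Cauchy/Fubini reduction of the iterated integral to the kernel $(t-s)$). The only point worth flagging is one you inherit from the paper itself: the identification $V(t)=Q(x\mathbf{u}(t))$ tacitly uses the normalization $\sigma_k=2\alpha_k/\gamma_k$ relating \eqref{fuctV} to \eqref{conservationcharge}, which is consistent with the mass-resonance framework but is not spelled out in either place.
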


Now we obtain a version of the Theorem \ref{persistL2wheit} when the initial data is radially symmetric. In this version we also consider a smooth cut-off function instead of $|x|$.

\begin{teore}\label{Viarialidenityradialcase} Assume  system \eqref{system1} satisfies the mass resonance condition \eqref{RC}. Let $ \mathbf{u}_0 \in \mathbf{H}^{1}$ and   $\mathbf{u}$ be the corresponding solution given by Theorems \ref{localexistenceH1} and \ref{locexistH1n=6}. Assume $\varphi \in C^{\infty}_{0}(\R^{n})$ and define
\begin{equation*}
V(t)=\frac{1}{2}\int \varphi(x)\left(\sum_{k=1}^{l}\frac{\alpha_{k}^{2}}{\gamma_{k}}|u_{k}|^{2}\right)\;dx.
\end{equation*}
Then,
\begin{equation*}
V'(t)=\sum_{k=1}^{l}\alpha_{k}\mathrm{Im}\int\nabla \varphi\cdot \nabla u_{k} \overline{u}_{k}\;dx,
\end{equation*}
and
\begin{equation}\label{secondervgeralcase}
\begin{split}
V''(t)&=2\sum_{1\leq m,j\leq n}\mathrm{Re}\int\frac{\partial^{2}\varphi}{\partial x_{m}\partial x_{j}}\left[\sum_{k=1}^{l}\gamma_{k}\partial x_{j}\overline{u}_{k}\partial x_{m}u_{k}\right]dx\\
&\quad-\frac{1}{2}\int\Delta^{2}\varphi\left(\sum_{k=1}^{l}\gamma_{k}|u_{k}|^{2}\right)\;dx+(1-p)\mathrm{Re}\int\Delta\varphi F\left(\mathbf{u}\right)\;dx.
\end{split}
\end{equation}
\end{teore}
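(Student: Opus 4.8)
The plan is to carry out the classical Morawetz/virial computation componentwise, summing with the weights $\alpha_k^2/\gamma_k$, and to invoke the mass-resonance condition \eqref{RC} together with the structural identities of Lemma \ref{estdifF} exactly at the two places where the nonlinearity must be tamed. Since $\varphi\in C^\infty_0(\R^n)$, all the integrations by parts below are justified by the persistence-of-regularity and density arguments already used for Theorem \ref{persistL2wheit}, so I will treat $\mathbf{u}$ as smooth and decaying when manipulating the integrals.

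\emph{Computation of $V'$.} First I would differentiate $V$ under the integral sign, using $\partial_t|u_k|^2=2\mathrm{Re}(\overline{u}_k\partial_t u_k)$ and substituting $\partial_t u_k=\tfrac{i}{\alpha_k}\bigl(\gamma_k\Delta u_k-\beta_k u_k+f_k(\mathbf{u})\bigr)$ read off from \eqref{system1}. Because $\mathrm{Im}(\beta_k|u_k|^2)=0$ the potential term drops out, and one integration by parts gives $\mathrm{Im}\int\varphi\,\overline{u}_k\Delta u_k\,dx=-\mathrm{Im}\int\overline{u}_k\,\nabla\varphi\cdot\nabla u_k\,dx$. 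The remaining nonlinear contribution is $-\int\varphi\sum_k\tfrac{\alpha_k}{\gamma_k}\mathrm{Im}\bigl[f_k(\mathbf{u})\overline{u}_k\bigr]\,dx$, which vanishes identically by the mass-resonance condition \eqref{RC}. Collecting terms yields the stated formula $V'(t)=\sum_{k}\alpha_k\,\mathrm{Im}\int\nabla\varphi\cdot\nabla u_k\,\overline{u}_k\,dx$.

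\emph{Computation of $V''$.} Next I would differentiate $V'$ once more, again substitute the equations for $\partial_t u_k$ and $\partial_t\overline{u}_k$, and use $\mathrm{Im}(iw)=\mathrm{Re}(w)$. The terms carrying $\beta_k$ cancel in pairs, so no potential survives, and $V''$ splits into a linear part (carrying $\gamma_k\Delta$) and a nonlinear part $N$ (carrying $f_k$). The linear part is the standard virial expression, and repeated integration by parts converts it into the Hessian term $2\sum_{m,j}\mathrm{Re}\int\partial_{x_m}\partial_{x_j}\varphi\sum_k\gamma_k\,\partial_{x_j}\overline{u}_k\,\partial_{x_m}u_k\,dx$ together with the bilaplacian term $-\tfrac12\int\Delta^2\varphi\sum_k\gamma_k|u_k|^2\,dx$. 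For the nonlinear part one has $N=\sum_k\mathrm{Re}\int\bigl[\overline{u}_k(\nabla\varphi\cdot\nabla)f_k-\overline{f}_k(\nabla\varphi\cdot\nabla)u_k\bigr]\,dx$; integrating the first term by parts and noting that the two transported-derivative contributions are complex conjugates gives
\begin{equation*}
N=-2\sum_{k}\mathrm{Re}\int f_k(\nabla\varphi\cdot\nabla)\overline{u}_k\,dx-\mathrm{Re}\int\Delta\varphi\sum_{k}f_k(\mathbf{u})\overline{u}_k\,dx.
\end{equation*}
Applying Lemma \ref{estdifF}(ii) to write $\mathrm{Re}\sum_k f_k\nabla\overline{u}_k=\mathrm{Re}\,\nabla F(\mathbf{u})$ and integrating by parts turns the first integral into $2\,\mathrm{Re}\int\Delta\varphi\,F(\mathbf{u})\,dx$, while Lemma \ref{estdifF}(iii) turns the second into $-(p+1)\mathrm{Re}\int\Delta\varphi\,F(\mathbf{u})\,dx$; their sum is exactly $(1-p)\mathrm{Re}\int\Delta\varphi\,F(\mathbf{u})\,dx$, which is the nonlinear term in \eqref{secondervgeralcase}.

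The main obstacle I expect is the bookkeeping in the linear part, where careful integration by parts is needed to produce precisely the Hessian plus bilaplacian form (and to check that all boundary terms vanish because $\varphi$ is compactly supported). The conceptually delicate points are, instead, the two structural inputs: the vanishing of the nonlinear contribution to $V'$ via \eqref{RC}, and the exact emergence of the factor $1-p$ in $N$, which relies on combining Lemma \ref{estdifF}(ii)–(iii), the latter encoding the homogeneity of degree $p+1$ from \ref{H5*}.
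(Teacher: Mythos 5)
Your proposal is correct and follows essentially the same route as the paper, which for this theorem simply cites the classical localized virial computation of Kavian (\cite[Lemma 2.9]{Kavian}) and its $p=2$ adaptation in \cite[Theorem 5.7]{NoPa} rather than writing it out. Your explicit bookkeeping — mass-resonance killing the nonlinear term in $V'$, the cancellation of the $\beta_k$ terms, the Hessian-plus-bilaplacian form of the linear part, and the combination of Lemma \ref{estdifF}(ii)--(iii) producing the factor $2-(p+1)=1-p$ — reproduces exactly that standard argument.
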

\begin{proof}
The proof follows the ideas presented in  \cite[Lemma 2.9]{Kavian}. See also reference \cite[Theorem 5.7]{NoPa}  where a version in the case $p=2$ was considered.
\end{proof}

\begin{coro}\label{corovirrad}

Under the assumptions of Theorem \ref{Viarialidenityradialcase}, if $\varphi$ and $\mathbf{u}_0$ are 
 radially symmetric functions,  we can write \eqref{secondervgeralcase} as
 \begin{equation}\label{secondervradialcase}
 \begin{split}
V''(t)&=2\int \varphi''\left(\sum_{k=1}^{l}\gamma_{k}|\nabla u_{k}|^{2}\right)dx-\frac{1}{2}\int\Delta^{2}\varphi\left(\sum_{k=1}^{l}\gamma_{k}|u_{k}|^{2}\right)dx\\
&\quad+(1-p)\mathrm{Re}\int\Delta\varphi\, F\left(\mathbf{u}\right)\;dx.
\end{split}
\end{equation}
 \end{coro}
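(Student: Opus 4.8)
The plan is to start from the general identity \eqref{secondervgeralcase} of Theorem \ref{Viarialidenityradialcase} and to observe that only its first term---the one containing the full Hessian of $\varphi$---requires simplification, since the remaining two terms (those with $\Delta^{2}\varphi$ and $\Delta\varphi$) already appear verbatim in \eqref{secondervradialcase}. Before reducing that term, I would record the persistence of radial symmetry under the flow: because the Laplacian and each nonlinearity $f_k$ commute with rotations, if $\mathbf{u}_0$ is radially symmetric then so is $\mathbf{u}(\cdot,t)$ for every $t\in I$, by the uniqueness asserted in Theorems \ref{localexistenceH1} and \ref{locexistH1n=6}. This legitimizes treating each $u_k(\cdot,t)$ as a radial function.

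Next I would write out the Hessian of a radial $\varphi(x)=\varphi(r)$, $r=|x|$, namely
\begin{equation*}
\frac{\partial^{2}\varphi}{\partial x_{m}\partial x_{j}}=\varphi''(r)\frac{x_{m}x_{j}}{r^{2}}+\frac{\varphi'(r)}{r}\left(\delta_{mj}-\frac{x_{m}x_{j}}{r^{2}}\right),
\end{equation*}
together with the radial form of the gradient, $\partial_{x_m}u_k=u_k'(r)\,x_m/r$, which gives $\partial_{x_j}\overline{u}_k\,\partial_{x_m}u_k=|u_k'(r)|^{2}\,x_jx_m/r^{2}$ and $|\nabla u_k|^{2}=|u_k'(r)|^{2}$.

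The heart of the argument is then the contraction in the first term of \eqref{secondervgeralcase}. Substituting the two expressions above and using the elementary identities $\sum_{m,j}\tfrac{x_mx_j}{r^2}\cdot\tfrac{x_jx_m}{r^2}=1$ and $\sum_{m,j}\delta_{mj}\tfrac{x_jx_m}{r^2}=1$, the two contributions carrying the factor $\varphi'(r)/r$ cancel exactly, leaving
\begin{equation*}
\sum_{1\leq m,j\leq n}\frac{\partial^{2}\varphi}{\partial x_{m}\partial x_{j}}\,\partial_{x_j}\overline{u}_k\,\partial_{x_m}u_k=\varphi''(r)\,|\nabla u_k|^{2}.
\end{equation*}
Multiplying by $\gamma_k$, summing over $k$, and taking twice the (already real) real part converts the first term of \eqref{secondervgeralcase} into $2\int\varphi''\bigl(\sum_{k=1}^{l}\gamma_{k}|\nabla u_{k}|^{2}\bigr)\,dx$, while the other two terms are carried over unchanged. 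This is precisely \eqref{secondervradialcase}.

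The main obstacle is purely the bookkeeping of the Hessian contraction and verifying that the $\varphi'$ terms cancel; the only conceptual point requiring care is the invariance of radial symmetry under the flow, which must be invoked so that the substitution $\nabla u_k=u_k'(r)\,x/r$ is justified for the solution at every time.
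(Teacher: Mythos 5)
Your proposal is correct and follows essentially the same route as the paper, which simply invokes Theorem \ref{Viarialidenityradialcase} together with the fact that radial symmetry of $\mathbf{u}_0$ propagates to the solution; your computation of the Hessian contraction $\sum_{m,j}\partial_{x_m}\partial_{x_j}\varphi\,\partial_{x_j}\overline{u}_k\partial_{x_m}u_k=\varphi''(r)|\nabla u_k|^2$ and the cancellation of the $\varphi'(r)/r$ terms is exactly the detail the paper leaves implicit. The uniqueness argument you give for persistence of radial symmetry is the standard and correct justification for that step.
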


\begin{proof}
The proof is a consequence of Theorem  \ref{Viarialidenityradialcase},  where it has been used that the solution $\mathbf{u}$ is  radially symmetric, when $\mathbf{u}_0$ is also radial. 
\end{proof}

\subsubsection{Pseudo-conformal transform}\label{subconformal}

In the  $L^{2}$ critical case, $\displaystyle p=1+\frac{4}{n}$ and under the mass-resonance condition system \eqref{system1} is  invariance of the  under the pseudo-conformal transformation. In what follows, $SL(2,\R)$ denotes the special linear group of degree 2.
\begin{lem}\label{pseudoconf}
Let  $p=1+\frac{4}{n}$ and   let $A=\left(\begin{array}{cc}
      a&b  \\
      c&d 
 \end{array}\right)\in SL(2,\R)$. Assume  system \eqref{system1} satisfies the mass resonance condition \eqref{RC} and  define $\mathbf{v}^A=(v_1^A, \ldots,v_l^A)$ by
 \begin{equation*}
     v_{k}^{A}(x,t)=(a+bt)^{\frac{2}{1-p}}e^{i\frac{\alpha_{k}}{\gamma_{k}}\frac{b|x|^{2}}{4(a+bt)}}u_{k}\left(\frac{x}{a+bt},\frac{c+dt}{a+bt}\right),\quad k=1,\ldots,l.
 \end{equation*}
If $\mathbf{u}$ is a solution of system \eqref{system1} with $\beta_{k}=0$, $k=1,\ldots,l$, so is $\mathbf{v}^A$.
\end{lem}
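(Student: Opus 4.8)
The plan is to verify the claim by direct substitution, reducing each scalar equation to a single pseudo-conformal (``lens transform'') computation and then handling the nonlinear term separately via homogeneity and the Gauge condition. First I would normalize the system: dividing the $k$-th equation of \eqref{system1} (with $\beta_k=0$) by $\alpha_k$ rewrites it as $i\partial_t u_k + c_k\Delta u_k = -\frac{1}{\alpha_k}f_k(\mathbf{u})$ with $c_k:=\gamma_k/\alpha_k$. The phase appearing in $v_k^A$ is precisely $\Theta_k(x,t)=\frac{1}{c_k}\frac{b|x|^2}{4(a+bt)}$, tuned to the operator $i\partial_t+c_k\Delta$, and since $p=1+\frac4n$ the prefactor is $(a+bt)^{\frac{2}{1-p}}=(a+bt)^{-n/2}$. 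So on the linear level the transformation is the classical lens transform for $i\partial_t+c_k\Delta$.

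Next I would introduce $\lambda(t)=a+bt$, the rescaled variables $y=x/\lambda$ and $s=(c+dt)/(a+bt)$, and record the elementary identities $\partial_t s = (ad-bc)/\lambda^2 = 1/\lambda^2$ (where $\det A=1$ enters), $\partial_t y = -by/\lambda$, $\nabla_x\Theta_k=\frac{bx}{2c_k\lambda}$, $\Delta_x\Theta_k=\frac{bn}{2c_k\lambda}$ and $\partial_t\Theta_k=-\frac{b^2|x|^2}{4c_k\lambda^2}$. Writing $v_k^A=\lambda^{-n/2}e^{i\Theta_k}u_k(y,s)$, I would compute $\partial_t v_k^A$ and $\Delta v_k^A$ by the product and chain rules and assemble $i\partial_t v_k^A+c_k\Delta v_k^A$. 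The crux is that the coefficient of $\nabla u_k$ vanishes, $-\frac{ibx}{\lambda^2}+\frac{2ic_k}{\lambda}\nabla\Theta_k=0$, and so does the coefficient of $u_k$, $-\frac{inb}{2\lambda}-\partial_t\Theta_k+ic_k\Delta\Theta_k-c_k|\nabla\Theta_k|^2=0$, leaving only
\[
i\partial_t v_k^A + c_k\Delta v_k^A = \frac{1}{\lambda^{2}}\,\lambda^{-n/2}e^{i\Theta_k}\bigl(i\partial_s u_k + c_k\Delta u_k\bigr)\big|_{(y,s)}.
\]

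I would then substitute the equation for $u_k$ to get $i\partial_t v_k^A+c_k\Delta v_k^A=-\frac{1}{\alpha_k}\lambda^{-n/2-2}e^{i\Theta_k}f_k(\mathbf{u}(y,s))$, and it remains to match the right-hand side with $-\frac{1}{\alpha_k}f_k(\mathbf{v}^A)$. Writing $\Theta_k=\frac{\alpha_k}{\gamma_k}\phi$ with common factor $\phi=\frac{b|x|^2}{4\lambda}$, I would use that $f_k$ is homogeneous of degree $p$ (Lemma \ref{estdifF} (iv)) to pull out $\lambda^{-np/2}$, and then the Gauge condition from Remark \ref{obsmassreso} (ii) to pull the common phase $\phi$ through $f_k$ with its weight $\alpha_k/\gamma_k$, yielding $f_k(\mathbf{v}^A)=\lambda^{-np/2}e^{i\Theta_k}f_k(\mathbf{u}(y,s))$. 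Because $p=1+\frac4n$ gives $np/2=n/2+2$, the two powers of $\lambda$ coincide, so $i\partial_t v_k^A+c_k\Delta v_k^A=-\frac{1}{\alpha_k}f_k(\mathbf{v}^A)$, i.e. $i\alpha_k\partial_t v_k^A+\gamma_k\Delta v_k^A=-f_k(\mathbf{v}^A)$, as claimed.

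The differentiation bookkeeping is routine; the conceptual heart, and the only place the hypotheses are genuinely used, is the last step. The linear cancellation forces the phase weight to be exactly $\frac{\alpha_k}{\gamma_k}$, and this is precisely the weight the Gauge identity of Remark \ref{obsmassreso} requires. Since that identity is available \emph{only} under the mass-resonance hypothesis \eqref{RC}, the transform fails to preserve solutions without mass resonance; together with the exact criticality $p=1+\frac4n$ (needed so that $\lambda^{-np/2}=\lambda^{-n/2-2}$), these are the two ingredients that make the proof close.
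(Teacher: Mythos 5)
Your proposal is correct and follows essentially the same route as the paper: the paper likewise reduces the linear part to the identity $\left[i\alpha_{k}\partial_{t}+\gamma_{k}\Delta\right]v_{k}^{A}=(a+bt)^{\frac{2p}{1-p}}e^{i\frac{\alpha_{k}}{\gamma_{k}}\frac{b|x|^{2}}{4(a+bt)}}\left[i\alpha_{k}\partial_{t}+\gamma_{k}\Delta\right]u_{k}$ (stated there as ``a straightforward calculation,'' which you simply carry out explicitly) and then matches the nonlinearity via homogeneity of degree $p$ together with the Gauge condition of Remark \ref{obsmassreso}(ii). Your identification of mass resonance and the criticality $p=1+\frac{4}{n}$ as the two essential hypotheses matches the paper's argument exactly.
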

\begin{proof}
First note that a straightforward  calculation gives 
\begin{equation}\label{calculation}
     \begin{split}
         &\left[i\alpha_{k}\partial_{t}v_{k}^{A}+\gamma_{k}\Delta v_{k}^{A}\right](x,t)\\
         &\qquad=(a+bt)^{\frac{2p}{1-p}}e^{i\frac{\alpha_{k}}{\gamma_{k}}\frac{b|x|^{2}}{4(a+bt)}}\left[i\alpha_{k}\partial_{t}u_{k}+\gamma_{k}\Delta u_{k}\right]\left(\frac{x}{a+bt},\frac{c+dt}{a+bt}\right),
     \end{split}
 \end{equation}
 for $ k=1,\ldots,l.$. For simplicity in the following we will drop the argument $\left(\frac{x}{a+bt},\frac{c+dt}{a+bt}\right)$ in the function $\mathbf{u}$. \\
 
 Since we are under the mass-resonance assumption, the nonlinearities $f_{k}$ satisfy the Gauge condition stated in Remark \ref{obsmassreso} (ii). Also, from Lemma \ref{estdifF} (iv) $f_{k}$ is homogeneous of degree $p$, then joining this with \eqref{calculation} we have for $ k=1,\ldots,l$
 
 \begin{equation*}
     \begin{split}
         f_{k}(\mathbf{v}^A(x,t))&=f_{k}\left((a+bt)^{\frac{2}{1-p}}e^{i\frac{\alpha_{1}}{\gamma_{1}}\frac{b|x|^{2}}{4(a+bt)}}u_{1},\ldots,(a+bt)^{\frac{2}{1-p}}e^{i\frac{\alpha_{l}}{\gamma_{l}}\frac{b|x|^{2}}{4(a+bt)}}u_{l}\right)\\
         &=(a+bt)^{\frac{2p}{1-p}}e^{i\frac{\alpha_{k}}{\gamma_{k}}\frac{b|x|^{2}}{4(a+bt)}}f_{k}\left(\mathbf{u}\right)\\
         &= \left[i\alpha_{k}\partial_{t}v_{k}^{A}+\gamma_{k}\Delta v_{k}^{A}\right](x,t),
     \end{split}
 \end{equation*}
 which complete the proof. 
\end{proof}

\begin{obs}\label{remsoluin}
Note that $\mathbf{u}$ is a solution of \eqref{system1} with $\beta_{k}=\frac{\alpha_k^2}{\gamma_k}$, $k=1,\ldots,l$, if and only if $\tilde{\mathbf{u}}$ given by
$$
\tilde{u}_k(x,t)=e^{i\frac{\alpha_k}{\gamma_k}t}u_k(x,t),  \qquad k=1,\ldots,l,
$$
is also  solution of \eqref{system1} but with $\beta_{k}=0$, $k=1,\ldots,l.$
\end{obs}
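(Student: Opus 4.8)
The plan is to verify directly that the gauge change $\tilde{u}_k=e^{i\frac{\alpha_k}{\gamma_k}t}u_k$ intertwines the two evolution equations, organizing the computation so that the ``if and only if'' falls out in one stroke. First I would fix $k$, abbreviate $\omega_k=\alpha_k/\gamma_k$, and apply the linear part of the operator to $\tilde{u}_k$. Since the phase depends only on $t$ while $\Delta$ acts only on $x$, the product rule gives
\begin{equation*}
i\alpha_k\partial_t\tilde{u}_k+\gamma_k\Delta\tilde{u}_k=e^{i\omega_k t}\left(i\alpha_k\partial_t u_k+\gamma_k\Delta u_k-\frac{\alpha_k^2}{\gamma_k}u_k\right),
\end{equation*}
the extra term $-\frac{\alpha_k^2}{\gamma_k}u_k$ arising precisely from $i\alpha_k\cdot i\omega_k=-\alpha_k^2/\gamma_k$. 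This is exactly a potential term with $\beta_k=\alpha_k^2/\gamma_k$, which is the structural reason the phase trades the value $\beta_k=\alpha_k^2/\gamma_k$ for $\beta_k=0$.

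The second step handles the nonlinearity. Because we are under the mass-resonance condition throughout this subsection, the Gauge condition of Remark \ref{obsmassreso} (ii) applies with $\theta=t$ and $z_j=u_j$, so that
\begin{equation*}
f_k(\tilde{\mathbf{u}})=f_k\!\left(e^{i\frac{\alpha_1}{\gamma_1}t}u_1,\ldots,e^{i\frac{\alpha_l}{\gamma_l}t}u_l\right)=e^{i\omega_k t}f_k(\mathbf{u}).
\end{equation*}
Adding this to the previous display produces the single intertwining identity, valid for every $k$,
\begin{equation*}
i\alpha_k\partial_t\tilde{u}_k+\gamma_k\Delta\tilde{u}_k+f_k(\tilde{\mathbf{u}})=e^{i\omega_k t}\left(i\alpha_k\partial_t u_k+\gamma_k\Delta u_k-\frac{\alpha_k^2}{\gamma_k}u_k+f_k(\mathbf{u})\right).
\end{equation*}

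From this the equivalence is immediate: the left-hand side vanishes exactly when $\tilde{\mathbf{u}}$ solves \eqref{system1} with $\beta_k=0$, while the parenthesized factor on the right vanishes exactly when $\mathbf{u}$ solves \eqref{system1} with $\beta_k=\alpha_k^2/\gamma_k$; since the scalar $e^{i\omega_k t}$ never vanishes, the two conditions are equivalent, and no separate converse argument is needed (equivalently, the inverse gauge $u_k=e^{-i\omega_k t}\tilde{u}_k$ runs the same computation with $t$ replaced by $-t$). I would close by noting that the Cauchy data match, since $e^{i\omega_k\cdot 0}=1$ forces $\tilde{u}_k(x,0)=u_k(x,0)$. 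The only point that genuinely requires attention, and the reason this remark belongs to the mass-resonance subsection, is passing the phase through $f_k$ via the Gauge condition; this is exactly where \eqref{RC} enters, through Remark \ref{obsmassreso} (ii). Everything else is routine product-rule bookkeeping, so I expect no substantial obstacle.
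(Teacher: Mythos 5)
Your proof is correct and is exactly the computation the paper intends (the paper states this as a remark without proof): the product rule produces the $-\frac{\alpha_k^2}{\gamma_k}u_k$ term from the time-dependent phase, and the Gauge condition of Remark \ref{obsmassreso} (ii), available because the whole subsection works under the mass-resonance condition \eqref{RC}, passes the phase through $f_k$. Your observation that the intertwining identity yields both directions at once, since $e^{i\omega_k t}$ never vanishes, is a clean way to package the equivalence.
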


Now, let $\psib \in \mathcal{G}(1,\boldsymbol{0}) $. In particular $\psib$  is a solution of \eqref{system1} with $\beta_{k}=\frac{\alpha_k^2}{\gamma_k}$. Hence, from Remark \ref{remsoluin}, 
$$
\tilde{u}_k(x,t)=e^{i\frac{\alpha_k}{\gamma_k}t}\psib(x),  \qquad k=1,\ldots,l,
$$
is a solution of \eqref{system1} with $\beta_k=0$. Moreover, by Lemma \ref{pseudoconf}, for any $A\in SL(2,\R)$, $\mathbf{v}^A$ defined by
$$
 v_{k}^{A}(x,t)=(a+bt)^{\frac{2}{1-p}}e^{i\frac{\alpha_{k}}{\gamma_{k}}\frac{b|x|^{2}}{4(a+bt)}} e^{i\frac{\alpha_k}{\gamma_k}\frac{c+dt}{a+bt}}\psi_k\left(\frac{x}{a+bt}\right), \qquad k=1,\ldots,l,
$$
is also a solution. With this in hand we are able to establish the following.

 \subsection{Proof of Theorem \ref{thm:globalexistencecondn=51}}

\subsubsection{Existence of global solutions}
 
To extend globally, in time, the solutions we need an \textit{a priori} bound to its $H^1$-norm. Since the $L^2$-norm is conserved in time (see Theorem \ref{L2normconserved})  it is sufficient to get an \textit{a priori} bound for $K(\mathbf{u}(t))$.\\

We start by recalling the definition of the critical exponent, 
 \begin{equation}\label{critexp2}
    s_c= \frac{n}{2}-\frac{2}{p-1}.
 \end{equation}

\begin{proof}[Proof of Theorem \ref{thm:globalexistencecondn=51}.i)]
To obtain such a bound in the part (i), in Theorem \ref{thm:globalexistencecondn=51}  we start using    \eqref{conservationenergy} and the Gagliardo-Nirenberg-type inequality, \eqref{GNE2}, to write
\begin{equation*}
\begin{split}
K(\mathbf{u})&=E(\mathbf{u}_{0})-L(\mathbf{u})+2P(\mathbf{u})\leq E(\mathbf{u}_{0})+2\left|P(\mathbf{u})\right|\leq E(\mathbf{u}_{0})\\
&\quad+ 2C_{opt}Q(\mathbf{u}_{0})^{\frac{p-1}{2}(1-s_c)}K(\mathbf{u})^{\frac{p-1}{2}s_c+1},
\end{split}
    \end{equation*}
where we have used that $L(\mathbf{u})\geq 0$. Next, in the last term of the previous expression we can apply Young's inequality (noting that when $1<p<1+\frac{4}{n}$, then $\left(\frac{2}{(1-p)s_c},\frac{2}{(p-1)s_c+2}\right)$ is a pair of conjugate  exponents) to obtain for any $\epsilon>0$,

\begin{equation*}
    K(\mathbf{u}) \leq  E(\mathbf{u}_{0})+C_{\epsilon}Q(\mathbf{u}_{0})^{\frac{s_c -1}{s_c}}+\epsilon K(\mathbf{u}),
\end{equation*}
for some constant $C_\epsilon$. From this we deduce that  if we choose $0<\epsilon<1$, then 
\begin{equation}\label{GNE2.1}
K(\mathbf{u}(t))\leq (1-\epsilon)^{-1}\left[E(\mathbf{u}_{0})+C_{\epsilon}Q(\mathbf{u}_{0})^{\frac{s_c -1}{s_c}}\right],
\end{equation}
  which means that $K(\mathbf{u}(t))$ is uniformly bounded as we need.\\
 \end{proof}

\begin{proof}[Proof of Theorem \ref{thm:globalexistencecondn=51}.ii.a)]
Next, we prove the part ii.a) in  Theorem \ref{thm:globalexistencecondn=51}. In this case, when $p=1+\frac{4}{n}$ we have from \eqref{critexp2} that $s_c=0$. Thus,  from the energy expression \eqref{conservationenergy} and the Gagliardo-Nirenberg-type inequality \eqref{GNE2}, we have again
\begin{equation*}
K(\mathbf{u})\leq E(\mathbf{u}_{0})+2C_{opt}Q(\mathbf{u}_{0})^{\frac{2}{n}}K(\mathbf{u}),
\end{equation*}

Now, using the expression for the best constant appearing in Corollary  \ref{corollarybestconstant} with $p=1+\frac{4}{n}$ we have

\begin{equation*}
K(\mathbf{u})\leq E(\mathbf{u}_{0})+\left(\frac{Q(\mathbf{u}_{0})}{Q(\psib)}\right)^{\frac{2}{n}}K(\mathbf{u}),
\end{equation*}
which can be written as

\begin{equation}\label{positenerg1}
\left[1-\left(\frac{Q(\mathbf{u}_{0})}{Q(\psib)}\right)^{\frac{2}{n}}\right]K(\mathbf{u})\leq E(\mathbf{u}_{0}).
\end{equation}
Thus, if condition \eqref{L2GSCond} holds  then
$$K(\mathbf{u})\leq \left[1-\left(\frac{Q(\mathbf{u}_{0})}{Q(\psib)}\right)^{\frac{2}{n}}\right]^{-1}E(\mathbf{u}_{0}), $$
which implies the required bound for $K(\mathbf{u}(t))$.
\end{proof}

\subsubsection{Existence of blow up solutions}

\begin{proof}[Proof of Theorem \ref{thm:globalexistencecondn=51}.ii.b)]

Finally we prove part ii.b) in Theorem \ref{thm:globalexistencecondn=51}.

\begin{teore}\label{sharpn=4}
Let $\displaystyle p=1+\frac{4}{n}$ and $\psib \in \mathcal{G}(1,\boldsymbol{0}) $. For any $T>0$ let  $A=\left(\begin{array}{cc}
T&- 1 \\
0&\frac{1}{T}
\end{array}\right)$
in such a way that
\begin{equation*}
     v_{k}^{A}(x,t)=(T-t)^{\frac{2}{1-p}}e^{-i\frac{\alpha_{k}}{\gamma_{k}}\frac{|x|^{2}}{4(T-t)}+i\frac{\alpha_{k}}{\gamma_{k}}\frac{t}{T(T-t)}}\psi_{k}\left(\frac{x}{T-t}\right),\quad k=1,\ldots,l.
 \end{equation*}
Then
 \begin{enumerate}
     \item[(i)] $\mathbf{v}^A$  is a solution of\eqref{system1} with $\beta_{k}=0$ for $k=1,\ldots,l$.
     \item[(ii)] 
     \begin{equation*}
         v_{k}^{A}(x,0)=T^{\frac{2}{1-p}}e^{-i\frac{\alpha_{k}}{\gamma_{k}}\frac{|x|^{2}}{4T}}\psi_{k}\left(\frac{x}{T}\right),\quad k=1,\ldots,l.
     \end{equation*}
     \item[(iii)] $Q(\mathbf{v}^A(0))=Q(\psib)$.
     \item[(iv)] $K(\mathbf{v}^A(t))=O((T-t)^{-2})$ as $t\to T^{-}$.
 \end{enumerate}
\end{teore}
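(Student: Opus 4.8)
The plan is to derive all four items from the pseudo-conformal structure already set up, reserving the only genuine computation for part (iv). First I would record that $A\in SL(2,\R)$ since $\det A = T\cdot\frac{1}{T}-(-1)\cdot 0 = 1$, and that with $a=T$, $b=-1$, $c=0$, $d=\frac{1}{T}$ one has $a+bt=T-t$ and $\frac{c+dt}{a+bt}=\frac{t}{T(T-t)}$. Substituting these into the explicit solution formula displayed just before the theorem (obtained from Lemma \ref{pseudoconf} applied to the standing wave $\tilde{u}_k(x,t)=e^{i\frac{\alpha_k}{\gamma_k}t}\psi_k(x)$) reproduces exactly the stated $v_k^A$; this gives part (i) at once, because that formula was shown to solve \eqref{system1} with $\beta_k=0$ for every $A\in SL(2,\R)$. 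Part (ii) is then immediate: setting $t=0$ gives $T-t=T$ and $\frac{t}{T(T-t)}=0$, so the second phase disappears and $v_k^A(x,0)=T^{\frac{2}{1-p}}e^{-i\frac{\alpha_k}{\gamma_k}\frac{|x|^2}{4T}}\psi_k(x/T)$.

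For part (iii) I would use that the Gaussian phase has modulus one, so that $\|v_k^A(\cdot,0)\|_{L^2}^2 = T^{\frac{4}{1-p}}\int|\psi_k(x/T)|^2\,dx$; the change of variables $y=x/T$ produces a factor $T^n$, giving $T^{\frac{4}{1-p}+n}\|\psi_k\|_{L^2}^2$. The point is that in the $L^2$-critical case $p=1+\frac{4}{n}$ one has $1-p=-\frac{4}{n}$, hence $\frac{4}{1-p}+n=-n+n=0$ and the scaling factor equals exactly $1$. Summing $\frac{\sigma_k\alpha_k}{2}\|\cdot\|_{L^2}^2$ over $k$ then yields $Q(\mathbf{v}^A(0))=Q(\psib)$.

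The main work is part (iv). Writing $\mu=T-t$ and $y=x/\mu$, I would differentiate
$$v_k^A(x,t)=\mu^{\frac{2}{1-p}}e^{i\theta_k}\psi_k(x/\mu),\qquad \theta_k=-\frac{\alpha_k}{\gamma_k}\frac{|x|^2}{4\mu}+\frac{\alpha_k}{\gamma_k}\frac{t}{T\mu},$$
obtaining, since $\nabla\theta_k=-\frac{\alpha_k}{\gamma_k}\frac{x}{2\mu}$,
$$\nabla v_k^A=\mu^{\frac{2}{1-p}}e^{i\theta_k}\Bigl(-i\frac{\alpha_k}{\gamma_k}\frac{x}{2\mu}\psi_k(x/\mu)+\frac{1}{\mu}(\nabla\psi_k)(x/\mu)\Bigr).$$
The crucial observation is that, because $\psi_k$ is \emph{real}, the cross term in $|\nabla v_k^A|^2$ is $2\,\mathrm{Re}$ of a purely imaginary quantity (a real multiple of $i\,\psi_k\,(x\cdot\nabla\psi_k)$) and therefore vanishes, leaving only $\frac{\alpha_k^2}{4\gamma_k^2}\frac{|x|^2}{\mu^2}\psi_k(x/\mu)^2+\frac{1}{\mu^2}|(\nabla\psi_k)(x/\mu)|^2$. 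After the change of variables $y=x/\mu$ (which contributes $\mu^n$, cancelling $\mu^{4/(1-p)}$ as in part (iii)), this integrates to
$$\|\nabla v_k^A(\cdot,t)\|_{L^2}^2=\frac{\alpha_k^2}{4\gamma_k^2}\bigl\||\cdot|\psi_k\bigr\|_{L^2}^2+\frac{1}{(T-t)^2}\|\nabla\psi_k\|_{L^2}^2.$$
Multiplying by $\gamma_k$ and summing gives $K(\mathbf{v}^A(t))=\sum_k\frac{\alpha_k^2}{4\gamma_k}\||\cdot|\psi_k\|_{L^2}^2+\frac{1}{(T-t)^2}K(\psib)$, where the first sum is a finite constant because ground states lie in $\Sigma$ (they decay exponentially). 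As $t\to T^-$ the second term dominates and behaves like $(T-t)^{-2}$, which is exactly the asserted $O((T-t)^{-2})$ rate; combined with (iii) and mass conservation this produces the threshold blow-up solution required for Theorem \ref{thm:globalexistencecondn=51}(ii.b). The only delicate points are verifying the cross-term cancellation carefully (using that $x\cdot\nabla\psi_k$ is real) and justifying the finiteness of the weighted norm $\||\cdot|\psi_k\|_{L^2}$.
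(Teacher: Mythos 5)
Your proposal is correct and follows essentially the same route as the paper, which proves (i) by invoking Lemma \ref{pseudoconf} and dismisses (ii)--(iv) as a ``direct calculation''; you have simply carried out that calculation explicitly (the cross-term cancellation using the realness of $\psi_k$ and the exact cancellation of the scaling factor $\mu^{4/(1-p)+n}$ at $p=1+\frac{4}{n}$ are the right ingredients). The only point the paper also leaves implicit, which you correctly flag, is the finiteness of $\||\cdot|\psi_k\|_{L^2}$, guaranteed by the decay of ground states.
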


\begin{proof}
Statement (i) is a consequence of the Lemma \ref{pseudoconf}. Statements (ii), (iii) and (iv) follow from a direct calculation.
\end{proof}

\begin{coro}\label{corosharp}
Under the assumptions of Theorem \ref{sharpn=4}, if  $\mathbf{u}^A$ is defined by
$$
u_k^A(x,t)=e^{-i\frac{\alpha_k}{\gamma_k}t}v_k^A(x,t),
$$
then $\mathbf{u}^A$ is a solution of \eqref{system1} with $\beta_{k}=\frac{\alpha_k^2}{\gamma_k}$, $k=1,\ldots,l$,  such that $Q(\mathbf{u}^A(0))=Q(\psib)$  and $\mathbf{u}^A$ blows-up in finite time.
\end{coro}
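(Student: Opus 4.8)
The plan is to read off all three assertions directly from Theorem \ref{sharpn=4} and the structural Remark \ref{remsoluin}, since passing from $\mathbf{v}^A$ to $\mathbf{u}^A$ is merely a spatially homogeneous phase modulation. First I would show that $\mathbf{u}^A$ solves \eqref{system1} with $\beta_k=\frac{\alpha_k^2}{\gamma_k}$. By Theorem \ref{sharpn=4}(i), $\mathbf{v}^A$ solves \eqref{system1} with $\beta_k=0$, and the definition of $\mathbf{u}^A$ can be rewritten as $v_k^A=e^{i\frac{\alpha_k}{\gamma_k}t}u_k^A$, which is exactly the relation linking $\tilde{\mathbf{u}}$ to $\mathbf{u}$ in Remark \ref{remsoluin}. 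Applying that equivalence with $\mathbf{v}^A$ in the role of $\tilde{\mathbf{u}}$ immediately yields the claim. Alternatively one substitutes $u_k^A=e^{-i\frac{\alpha_k}{\gamma_k}t}v_k^A$ directly into the equation: the time derivative produces the extra linear term $\frac{\alpha_k^2}{\gamma_k}u_k^A$, while the Gauge condition of Remark \ref{obsmassreso}(ii) --- available because we assume \eqref{RC} --- gives $f_k(\mathbf{u}^A)=e^{-i\frac{\alpha_k}{\gamma_k}t}f_k(\mathbf{v}^A)$ and closes the computation.

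Next I would use that the factor $e^{-i\frac{\alpha_k}{\gamma_k}t}$ depends only on $t$, so that $|u_k^A|=|v_k^A|$ and $\nabla u_k^A=e^{-i\frac{\alpha_k}{\gamma_k}t}\nabla v_k^A$ pointwise; consequently $Q(\mathbf{u}^A(t))=Q(\mathbf{v}^A(t))$ and $K(\mathbf{u}^A(t))=K(\mathbf{v}^A(t))$ for every $t$. Since $u_k^A(x,0)=v_k^A(x,0)$, Theorem \ref{sharpn=4}(iii) then gives $Q(\mathbf{u}^A(0))=Q(\mathbf{v}^A(0))=Q(\psib)$, which is the asserted mass identity.

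Finally, I would transfer the gradient blow-up: by the invariance just noted together with Theorem \ref{sharpn=4}(iv), $K(\mathbf{u}^A(t))=K(\mathbf{v}^A(t))=O((T-t)^{-2})\to\infty$ as $t\to T^-$, while $Q(\mathbf{u}^A(t))$ stays constant, so $\|\mathbf{u}^A(t)\|_{\mathbf{H}^1}\to\infty$ as $t\to T^-$. Because $\mathbf{u}^A$ is, by uniqueness, the local solution issued from $\mathbf{u}^A(\cdot,0)\in\mathbf{H}^1$, the blow-up alternative stated after Theorem \ref{locexistH1n=6} forces its maximal forward existence time to be at most $T$, and hence $\mathbf{u}^A$ blows up in finite time. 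The one point requiring care is checking $\mathbf{u}^A(\cdot,0)\in\mathbf{H}^1$: the quadratic phase $e^{-i\frac{\alpha_k}{\gamma_k}\frac{|x|^2}{4T}}$ contributes a term growing like $|x|$ to the gradient, so this membership rests on $x\psib\in\mathbf{L}^2$, which holds by the exponential decay of the ground states. I expect this regularity verification, rather than any substantive estimate, to be the main (and mild) obstacle.
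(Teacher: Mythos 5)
Your argument is correct and is precisely the (implicit) proof the paper intends: the corollary is read off from Remark \ref{remsoluin} applied with $\mathbf{v}^A$ in the role of $\tilde{\mathbf{u}}$, the identity $|u_k^A|=|v_k^A|$ for the mass, and Theorem \ref{sharpn=4}(iv) combined with the blow-up alternative for the finite-time blow-up. Your added observations — that the gauge condition under \eqref{RC} is what makes the phase conjugation compatible with the nonlinearity, and that $\mathbf{u}^A(\cdot,0)\in\mathbf{H}^1$ rests on $x\psib\in\mathbf{L}^2$ (and, one should add, that the $O((T-t)^{-2})$ in (iv) is really an exact order, so that $K$ genuinely diverges) — are exactly the right points of care and do not change the route.
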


\end{proof}

  \subsection{Proof of Theorem \ref{thm:globalexistencecondn=52}}

 \subsubsection{Existence of global solutions}

 \begin{proof}[Proof of Theorem \ref{thm:globalexistencecondn=52}.i)]

In order to prove the first part of Theorem \ref{thm:globalexistencecondn=52} we are going to apply Lemma \ref{supercritcalcase}. 

We start using again the conservation of energy, \eqref{conservationenergy} and the Gagliardo-Nirenberg-type inequality \eqref{GNE2}, to get
\begin{equation*}
K(\mathbf{u})\leq E(\mathbf{u}_{0})+2C_{opt}Q(\mathbf{u}_{0})^{\frac{p-1}{2}(1-s_c)}K(\mathbf{u})^{\frac{p-1}{2}s_c+1},
\end{equation*}

Now,  we define 

\begin{equation}\label{notation1}
    a=E(\mathbf{u}_{0}),\quad b=2C_{opt}Q(\mathbf{u}_{0})^{\frac{p-1}{2}(1-s_c)},\quad q=\frac{p-1}{2}s_c+1\quad \mbox{and } \quad G(t)=K(\mathbf{u}(t))
\end{equation}
With this notation we have
\begin{equation}\label{notation2}
    \gamma:=(bq)^{-\frac{1}{q-1}}=\frac{n}{2(1-s_c)}\frac{Q(\psib)^ {\frac{1}{s_c}}}{Q(\mathbf{u}_{0})^{\frac{1-s_c}{s_c}}}.
\end{equation}

Next, using  Lemma \ref{identitiesfunctionals} we obtain $K(\psib)=\frac{n}{2(1-s_c)}Q(\psib)$ and $\mathcal{E}(\psib)=\frac{s_c}{1-s_c}Q(\psib)$. Putting these facts together,  we can show that $a<\left(1-\frac{1}{q}\right)\gamma$ is equivalent to \eqref{conditionsharp1} and $G(0)<\gamma$ is equivalent to \eqref{conditionsharp2}. Thus an application of  Lemma \ref{supercritcalcase} gives the desired bound to  $K(\mathbf{u}(t))$ and as before, this means that the solution can be extended globally in time, which finished the proof of Theorem \ref{thm:globalexistencecondn=52}.i). 
 \end{proof}

\subsubsection{Existence of blow up solutions}

Now we will prove the second part of Theorem \ref{thm:globalexistencecondn=52}. Since by assumption   system \eqref{system1} satisfies the mass resonance condition, we can apply the Virial identities stated in Theorem \ref{persistL2wheit} and Corollary \ref{corovirrad}  to construct blow-up solutions in the the $L^{2}$ supercritical and $H^{1}$ subcritical case; $1+\frac{4}{n}<p<\frac{n+2}{n-2}$.  To do so, we consider suitable initial data which does not satisfy condition \eqref{conditionsharp2} and whose corresponding solution blows-up in finite time.  We follow the method presented in   \cite{Holmer2} and \cite{Ogawa}  which was applied to a single Schr\"{o}dinger equation. In the case of  systems in \cite{Pastor}  an adaptation appears.  

\begin{proof}[Proof of Theorem \ref{thm:globalexistencecondn=52}.ii)] First of all, we apply Lemma \ref{corosupercritcalcase} to obtain a modification  of condition \eqref{gradientcondblowup}. In fact, 
since  condition \eqref{conditionsharp1} holds,  we can find a sufficient small constant  $\delta_{1}>0$  such that
\begin{equation}\label{conddelta1}
Q(\mathbf{u}_0)^{1-s_c}E(\mathbf{u}_0)^{s_c}<(1-\delta_{1})^{s_c}Q(\psib)^{1-s_c}\mathcal{E}(\psib)^{s_c}.
\end{equation}
With the same notation as in \eqref{notation1} and \eqref{notation2}, we can check that $G(0)>\gamma$ is equivalent to \eqref{gradientcondblowup} and $a<(1-\delta_{1})\left(1-\frac{1}{q}\right)\gamma$ is equivalent to \eqref{conddelta1}. Thus Lemma 
 \ref{corosupercritcalcase} implies that there exists $\delta_{2}>0$ such that
\begin{equation}\label{conddelta2}
Q(\mathbf{u}_0)^{1-s_c}K(\mathbf{u}(t))^{s_c}>(1+\delta_{2})^{s_c}Q(\psib)^{1-s_c}K(\psib)^{s_c}.
\end{equation}
Next, from \eqref{critexp2} and since we are assuming $p>1+\frac{4}{n}$, we have
\begin{equation}\label{sccond}
    \frac{2(p-1)s_c+4}{n}=p-1>0 \quad\mbox{and} \quad s_c>0. 
\end{equation}

We first assume  $x\mathbf{u}_0\in \mathbf{L}^{2}$. Hence, the variance, \eqref{fuctV}, is finite and we recall  that the second derivative of the variance, \eqref{secderV}, is given by
\begin{equation*}
V''(t)=2n(p-1) E(\mathbf{u}_0)-2n(p-1)L(\mathbf{u}(t))-2[n(p-1)-4]K(\mathbf{u}(t)),\qquad t\in I.
\end{equation*}
By using \eqref{critexp2} and noting that  $L(\mathbf{u}(t))\geq 0$ and  we have

\begin{equation}\label{virialidentity2}
V''(t)\leq 2[2(p-1)s_c+4] E(\mathbf{u}_0)-4(p-1)s_c K(\mathbf{u}(t)),\qquad t\in I.
\end{equation}
Multiplying both sides of (\ref{virialidentity2})  by $Q(\mathbf{u}_0)^{\frac{1-s_c}{s_c}}$, using the inequalities \eqref{conddelta1}-\eqref{conddelta2}, the relation $\mathcal{E}(\psib)=\frac{2s_c}{n}K(\psib)$ and \eqref{sccond} we obtain, for any $t\in I$,
\begin{equation*}
\begin{split}
V''(t)Q(\mathbf{u}_0)^{\frac{1-s_c}{s_c}}&=2[2(p-1)s_c+4] E(\mathbf{u}_0)Q(\mathbf{u}_0)^{\frac{1-s_c}{s_c}}\\
&\quad-4(p-1)s_c K(\mathbf{u}(t))Q(\mathbf{u}_0)^{\frac{1-s_c}{s_c}}\\
&<2[2(p-1)s_c+4](1-\delta_{1})\mathcal{E}(\psib)Q(\psib)^{\frac{1-s_c}{s_c}} \\
&\quad-4(p-1)s_c (1+\delta_{2}) K(\psib)Q(\psib)^{\frac{1-s_c}{s_c}}\\
&= 4(1-\delta_{1}) \frac{2(p-1)s_c+4}{n}s_c K(\psib)Q(\psib)^{\frac{1-s_c}{s_c}}\\
&\quad-4(p-1)s_c (1+\delta_{2})K(\psib)Q(\psib)^{\frac{1-s_c}{s_c}}\\
&=-4(p-1)s_c\left(\delta_{1}+\delta_{2}\right)K(\psib)Q(\psib)^{\frac{1-s_c}{s_c}}\\
&=:-B,
\end{split}
\end{equation*}
where $B$ is a positive constant, by \eqref{sccond}.
Thus, if we assume that  $I$ is infinite must exist $t^{*}\in I$ such that $V(t^{*})<0$, which is a contradiction,  because $V>0$. Therefore $I$ must be finite.\\

Next, we assume that $\mathbf{u}_0$ is radially symmetric. In this case, we can use the localized version of the virial identity given in \eqref{secondervradialcase}. Thus,   we use $\chi_{R}$  defined in Lemma \ref{lemafunctionchi} 
 as $\varphi$ in \eqref{secondervradialcase}, to get
 \begin{equation}\label{secodnderivativeVwithchi}
 \begin{split}
V''(t)&=2\int \chi_{R}''\left(\sum_{k=1}^{l}\gamma_{k}|\nabla u_{k}|^{2}\right)\;dx-\frac{1}{2}\int\Delta^{2}\chi_{R}\left(\sum_{k=1}^{l}\gamma_{k}|u_{k}|^{2}\right)\;dx\\&\quad+(1-p)\mathrm{Re}\int\Delta\chi_{R} F\left(\mathbf{u}\right)\;dx.
\end{split}
\end{equation}
The idea is again to apply a convex argument, so we will estimate each one of the terms appearing in $V''$ and conclude that it is less than a negative constant. For the first term, we use the fact that $\chi_{R}''(r)\leq 2$,  to obtain
\begin{equation}\label{estim1}
2\int \chi_{R}''\left(\sum_{k=1}^{l}\gamma_{k}|\nabla u_{k}|^{2}\right)\;dx\leq 4\sum_{k=1}^{l}\gamma_{k}\|\nabla u_{k}\|^{2}_{L^{2}}= 4K(\mathbf{u}).
\end{equation}
For the second one, using Lemma \ref{lemafunctionchi} and the fact that the  charge is a conserved quantity we get
\begin{equation}\label{estim2}
\begin{split}
-\frac{1}{2}\int\Delta^{2}\chi_{R}\left(\sum_{k=1}^{l}\gamma_{k}|u_{k}|^{2}\right)dx&=-
\frac{1}{2}\int_{\{|x|\geq R\}}\Delta^{2}\chi_{R}\left(\sum_{k=1}^{l}\gamma_{k}|u_{k}|^{2}\right)dx\\
&\leq\frac{C_{2}}{R^{2}}\int_{\{|x|\geq R\}}\left(\sum_{k=1}^{l}\gamma_{k}|u_{k}|^{2}\right)dx\\
&\leq\frac{C_{2}}{R^{2}}\max_{1\leq j\leq l}\left\{\frac{\gamma_{j}^{2}}{\alpha_{j}^{2}}\right\}\sum_{k=1}^{l}\frac{\alpha_{k}^{2}}{\gamma_{k}}\|u_{k}\|^{2}_{L^{2}}\\
&=\frac{{C}_{2}'}{R^{2}}Q(\mathbf{u}_0),
\end{split}
\end{equation}
for some positive constant $C_2'$.
Finally, the last term in  (\ref{secodnderivativeVwithchi}) is estimated as follows
\begin{equation*}
\begin{split}
(1-p)\mbox{Re}\int\Delta\chi_{R} F\left(\mathbf{u}\right)\;dx&=(1-p)\mathrm{Re}\int_{\{|x|\leq R\}}\Delta\chi_{R} F\left(\mathbf{u}\right)\;dx\\
&\quad+(1-p)\mathrm{Re}\int_{\{|x|\geq R\}}\Delta\chi_{R} F\left(\mathbf{u}\right)\;dx\\
&\leq 2n(1-p)\;\mathrm{Re}\int_{\{|x|\leq R\}} F\left(\mathbf{u}\right)\;dx\\
&\quad+C_{1}\int_{\{|x|\geq R\}}\left|\mathrm{Re}\,F\left(\mathbf{u}\right)\right|\;dx\\
&=2n(1-p)\;\mathrm{Re}\,\int_{\R^{n}} F\left(\mathbf{u}\right)\;dx+C_{1}'\int_{\{|x|\geq R\}}\left|\mathrm{Re}\,F\left(\mathbf{u}\right)\right|\;dx\\
&= 2n(1-p) P(\mathbf{u})+C_{1}'\int_{\{|x|\geq R\}}\left|\mathrm{Re}\,F\left(\mathbf{u}\right)\right|\;dx,
\end{split}
\end{equation*}
where we have used again   Lemma  \ref{lemafunctionchi}. Here $C_1'$ is also a positive constant.
Now, from definition of the energy \eqref{conservationenergy} and the fact that it is conserved we can write for the first
term  in the last inequality
$2n(1-p) P(\mathbf{u})=n(p-1)E(\mathbf{u}_0)+n(1-p)K(\mathbf{u})+n(1-p)L(\mathbf{u}).$
Thus, by using \eqref{critexp2} and  that  $L(\mathbf{u}(t))\geq 0$ we have

\begin{equation*}
    2n(1-p) P(\mathbf{u}) \leq [2(p-1)s_c+4]E(\mathbf{u}_0)-[2(p-1)s_c+4]K(\mathbf{u}).
\end{equation*}
Which implies that

\begin{equation}\label{estim3}
\begin{split}
(1-p)\mathrm{Re}\int\Delta\chi_{R} F\left(\mathbf{u}\right)\;dx&\leq  [2(p-1)s_c+4]E(\mathbf{u}_0)- [2(p-1)s_c+4]K(\mathbf{u})\\
&\quad+C_{1}'\int_{\{|x|\geq R\}}\left|\mathrm{Re}\, F\left(\mathbf{u}\right)\right|\;dx.
\end{split}
\end{equation}
Gathering together \eqref{secodnderivativeVwithchi}-(\ref{estim3}) and using Lemma \ref{estdifF}, we have
\begin{equation}\label{1b}
\begin{split}
V''(t)&\leq [2(p-1)s_c+4]E(\mathbf{u}_0)-2(p-1)s_cK(\mathbf{u})+\frac{{C'}_{2}}{R^{2}}Q(\mathbf{u}_0)\\
&\quad+C_{1}'\int_{\{|x|\geq R\}}\left|\mbox{Re}\,F\left(\mathbf{u}\right)\right|\;dx\\
&\leq [2(p-1)s_c+4]E(\mathbf{u}_0)-2(p-1)s_cK(\mathbf{u})+\frac{{C'}_{2}}{R^{2}}Q(\mathbf{u}_0)\\
&\quad+C_{1}'C\sum_{k=1}^{l}\|u_{k}\|^{p+1}_{L^{p+1}(|x|\geq R)}.
\end{split}
\end{equation}
Finally, to deal with the last term in \eqref{1b} we use Lemma \ref{StraussLemaconsequence} and taking into account the assumption of $p$: $1+\frac{4}{n}<p<\min\left\{\frac{n+2}{n-2},5\right\}$ we apply  Young's inequality with $\epsilon$ (small) to conclude that
 \begin{equation*}
 \begin{split}
\sum_{k=1}^{l}\|u_{k}\|^{p+1}_{L^{p+1}(|x|\geq R)}
&\leq \frac{\tilde{C}}{R^{\frac{(n-1)(p-1)}{2}}}\sum_{k=1}^{l}\|u_{k}\|^{(p+3)/2}_{L^{2}(|x|\geq R)}\|\nabla u_{k}\|^{(p-1)/2}_{L^{2}(|x|\geq R)}\\
&\leq C_{\epsilon}\sum_{k=1}^{l}\left\{R^{-{\frac{(n-1)(p-1)}{2}}}\left[\left(\frac{\alpha_{k}^{2}}{\gamma_{k}}\right)^{1/2}\|u_{k}\|_{L^{2}(|x|\geq R)}\right]^{(p+3)/2}\right\}^{4/(5-p)} \\
&\quad+\epsilon\sum_{k=1}^{l}\left\{\left[\gamma_{k}^{1/2}\|\nabla u_{k}\|_{L^{2}(|x|\geq R)}\right]^{(p-1)/2}\right\}^{4/(p-1)}\\
&\leq \frac{\tilde{C}_{\epsilon}}{R^{\frac{2(n-1)(p-1)}{5-p}}}\left(\sum_{k=1}^{l}\frac{\alpha_{k}^{2}}{\gamma_{k}}\|u_{k}\|^{2}_{L^{2}(|x|\geq R)}\right)^{(p+3)/(5-p)}\\
&\quad+\epsilon\sum_{k=1}^{l}\gamma_{k}\|\nabla u_{k}\|^{2}_{L^{2}(|x|\geq R)}\\
&\leq\frac{\tilde{C}_{\epsilon}}{R^{\frac{2(n-1)(p-1)}{5-p}}}Q(\mathbf{u}_0)^{(p+3)/(5-p)}+\epsilon K(\mathbf{u}).
\end{split}
 \end{equation*}
 Therefore, from \eqref{1b},
\begin{equation}
\begin{split}\label{estimaV2da}
V''(t)&\leq [2(p-1)s_c+4]E(\mathbf{u}_0)+\left[-2(p-1)s_c+\epsilon\right]K(\mathbf{u})+\frac{C_2'}{R^{2}}Q(\mathbf{u}_0)
\\
&\quad+\frac{\tilde{C}_{\epsilon}}{R^{\frac{2(n-1)(p-1)}{5-p}}}Q(\mathbf{u}_0)^{(p+3)/(5-p)}.
\end{split}
\end{equation}
Multiplying (\ref{estimaV2da}) by $Q(\mathbf{u}_0)^{\frac{1-s_c}{s_c}}$, we obtain
\begin{equation*}
\begin{split}
Q(\mathbf{u}_0)^{\frac{1-s_c}{s_c}}V''(t)&\leq [2(p-1)s_c+4]E(\mathbf{u}_0)Q(\mathbf{u}_0)^{\frac{1-s_c}{s_c}}\\
&\quad+\left[-2(p-1)s_c+\epsilon\right]K(\mathbf{u})Q(\mathbf{u}_0)^{\frac{1-s_c}{s_c}}
+\frac{C_2'}{R^{2}}Q(\mathbf{u}_0)^{\frac{1}{s_c}}\\
&\quad+\frac{\tilde{C}_{\epsilon}}{R^{\frac{2(n-1)(p-1)}{5-p}}}Q(\mathbf{u}_0)^{\frac{p+3}{5-p}+\frac{1-s_c}{s_c}}.
\end{split}
\end{equation*}
 Using (\ref{conddelta1}), (\ref{conddelta2}), the relation $\mathcal{E}(\psib)=\frac{2s_c}{n}K(\psib)$ and \eqref{sccond} we can write
\begin{equation*}
\begin{split}
Q(\mathbf{u}_0)^{\frac{1-s_c}{s_c}}V''(t)&\leq [2(p-1)s_c+4](1-\delta_{1}) Q(\psib)^{\frac{1-s_c}{s_c}}\mathcal{E}(\psib)\\
&\quad+\left[-2(p-1)s_c+\epsilon\right](1+\delta_{2})Q(\psib))^{\frac{1-s_c}{s_c}}K(\psib) +\frac{C_2'}{R^{2}}Q(\mathbf{u}_0)^{\frac{1}{s_c}}
\\
&\quad+\frac{\tilde{C}_{\epsilon}}{R^{\frac{2(n-1)(p-1)}{5-p}}}Q(\mathbf{u}_0)^{\frac{p+3}{5-p}+\frac{1-s_c}{s_c}}\\
&=\left[-2s_{c}(p-1)(\delta_{1}+\delta_{2})+\epsilon(1+\delta_{2})\right]Q(\psib)^{\frac{1-s_c}{s_c}}K(\psib)\\
&\quad+
\frac{C_2'}{R^{2}}Q(\mathbf{u}_0)^{\frac{1}{s_c}}+\frac{\tilde{C}_{\epsilon}}{R^{\frac{2(n-1)(p-1)}{5-p}}}Q(\mathbf{u}_0)^{\frac{p+3}{5-p}+\frac{1-s_c}{s_c}}.
\end{split}
\end{equation*}
 Choosing $\epsilon>0$ small enough and  $R>0$ sufficiently large , we can conclude that $V''(t)<-B$, for some constant $B>0$, since $2s_{c}(p-1)>0$, by  \eqref{sccond}. As in the case of finite variance, we then conclude that $I$ must be finite. 
\end{proof}

\section{Particular cases}\label{sec.parti.cases}

In this section we verify some of the results obtained in the previous sections applying to some particular nonlinear Schrodinger systems.

\subsection{Quadratic system}
We consider again system \eqref{system1J}, 
\begin{equation}\label{system1J2}
\begin{cases}
\displaystyle i\partial_{t}u+\Delta u=-2\overline{u}v,\\
\displaystyle i\partial_{t}v+\kappa\Delta v=- u^{2},
\end{cases}
\end{equation}
where $\ka>0$. System \eqref{system1J2} is  a particular model of system \eqref{system1} when $l=2$ and $p=2$. In this, case we have  the following parameters

\begin{center}
    \begin{tabular}{|c|c|}\hline 
     $\alpha_1 =1$  &  $\gamma_1 =1$   \\\hline 
       $\alpha_2 =1$  &  $\gamma_2 =\ka$  \\\hline 
    \end{tabular}
\end{center}

\subsubsection{The nonlinearities}

From  \eqref{critexp} we have that the critical exponent is $s_c=\frac{n}{2}-2$, thus system \eqref{systemFA2} satisfies the following regimes 

\begin{equation*}
     L^{2}-
     \begin{cases}
     \mbox{subcritical}, \quad\mbox{if}\quad  1\leq n\leq 3,\\
     \mbox{critical},\quad\mbox{if}\quad n=4,\\
     \mbox{supercritical},\quad\mbox{if}\quad n\geq 5;
     \end{cases}\quad \mbox{and} \quad
     H^{1}-
     \begin{cases}
     \mbox{subcritical},\quad \mbox{if }\quad 1\leq n\leq 5   \\
     \mbox{critical},\quad\mbox{if}\quad n=6,\\
     \mbox{supercritical},\quad\mbox{if}\quad n\geq 7.
     \end{cases}
 \end{equation*}

The nonlinear functions in system \eqref{system1J2} satisfying   assumptions \ref{H1} and \ref{H2*} are given by

\begin{equation}\label{fksyst2}
    f_1(z_1,z_2)=2\overline{z}_1 z_2 \qquad \mbox{and}\qquad f_2(z_1,z_2)= z_{1}^{2}.
\end{equation}

We now turn to the expression of the potential function, $F$, appearing in assumption \ref{H3}, which for system \eqref{system1J2} has the form 

\begin{equation}\label{Fsyst2}
    F(z_1,z_2)=\overline{z}_1^2z_2. 
\end{equation}

We note that $F$  is a  homogeneous function of degree $3$, which means that assumption \ref{H5*} is satisfied and it is  also clear that $F$ satisfies \ref{H6}. We also observe that if we take $\sigma_1=2$ and $\sigma_2=4$,  functions $f_1$ and $f_2$ in \eqref{fksyst2} satisfy \ref{H4*}. \\

Finally, we note that the function $F$ appearing in \eqref{Fsyst2} satisfies assumptions \ref{H7} and \ref{H8}. For\ref{H8}, note that the potential $F$ can be decomposed as a sum of  functions $F_s$ satisfying \eqref{condsupmod}

\subsubsection{Ground state solutions}

Now, to show the existence of \textit{ground states} solutions for $ 1\leq n\leq 5$, in Lemma \ref{identitiesfunctionals} we presented some relations between the functionals $I$, $Q$
, $K$ and $P$. In the case of system \eqref{system1J2} we have (see \cite[Theorem 4.1]{Hayashi}, also Lemmas 2.3 and 2.5 in \cite{NoPa})

\begin{equation*}
    P(\psib)=2I, \qquad K(\psib)=nI, \qquad \mathcal{Q}(\psib)=(6-n)I(\psib) \qquad \mbox{and}
\end{equation*}
\begin{equation*}
    J(\psib)=\frac{n^{\frac{n}{4}}}{2}\left(6-n\right)^{\frac{6-n}{4}}I(\boldsymbol{\psi})^{\frac{1}{2}}.
\end{equation*}

As we saw in Corollary \ref{corollarybestconstant}, one consequence of the existence of \textit{ground states} solutions was to determine the best constant in a    Gagliardo-Nirenberg-type inequality. In the case of system \eqref{system1J2} this constant is given by

\begin{equation}\label{bestconstant2}
C_{opt}=\frac{2\left(6-n\right)^{\frac{n-4}{4}}}{n^{\frac{n}{4}}}\frac{1}{\mathcal{Q}(\tilde{\psib})^{\frac{1}{2}}},
\end{equation}
where $\tilde{\psib}\in \mathcal{G}(\omega,\boldsymbol{\beta})$.

\subsubsection{Mass-resonance}
Regarded to the mass-resonance condition (see \eqref{RC}) we have that for system \eqref{system1J2}, $\displaystyle\frac{\alpha_1}{\gamma_1}=1$ and $\displaystyle\frac{\alpha_2}{\gamma_2}=\sigma$. Then, the condition   \eqref{RC} is equivalent to

 \begin{equation*}
      \left(1-\frac{1}{2\kappa}\right)\mathrm{Im}(u^{2}\overline{v})=0.
  \end{equation*}
Thus, system  \eqref{system1J2} satisfies  the mass-resonance condition if and only if $\kappa=\frac{1}{2}$, which coincides with the value considered in the literature (see discussion before    \eqref{RC}). 

\subsubsection{About Theorems \ref{thm:globalexistencecondn=51} and \ref{thm:globalexistencecondn=52}}
For system \eqref{system1J2}, the global existence result presented in Theorem \ref{thm:globalexistencecondn=51} part (i) was proved in reference \cite[Theorem 3.7]{Hayashi}. Parts ii.a) and ii.b) were shown in \cite[Theorem 6.1]{Hayashi} and \cite[Theorem 6.3]{Hayashi}, respectively. \\
In reference \cite{NoPa2}, the same result was proved for  a more general quadratic system (see \cite[Theorem 3.16.(i)]{NoPa2}, \cite[Theorem 5.2.(i)]{NoPa2}, and \cite[Theorem 5.14]{NoPa2}). 

The corresponding proof of Theorem \ref{thm:globalexistencecondn=52} for system \eqref{system1J2} was proved in reference \cite[Theorems 1.1 and 1.2]{NoPa}, see also reference \cite{hamano2018global}. Again, for a more general nonlinearities with quadratic grwouth, this result can be founded in reference \cite[Theorem 5.2.(ii)]{NoPa2}, for part (i) and \cite[Theorem 5.16]{NoPa2} for part (ii).

\subsection{Cubic system} 

We recall system \eqref{systemFA}, 
\begin{equation}\label{systemFA2}
\begin{cases}
\displaystyle i\partial_{t}u+\Delta u-u+\left(\frac{1}{9}|u|^{2}+2|w|^{2}\right)u+\frac{1}{3}\overline{u}^{2}w=0,\\
\displaystyle i\sigma\partial_{t}w+\Delta w-\mu w+ (9|w|^{2}+2|u|^{2})w+\frac{1}{9}u^{3}=0,
\end{cases}
\end{equation}
where $\sigma,\mu>0$.

This is a cubic nonlinear Schr\"{o}dinder system, which  corresponds to a particular case of system \eqref{system1} when $l=2$ and $p=3$. The parameters of the system are given by

\begin{center}
    \begin{tabular}{|c|c|c|}\hline 
     $\alpha_1 =1$  &  $\gamma_1 =1$ &  $\beta_1 =1$  \\\hline 
       $\alpha_2 =\sigma$  &  $\gamma_2 =1$ &  $\beta_2 =\mu$ \\\hline 
    \end{tabular}
\end{center}

\subsubsection{The nonlinearities}

From  \eqref{critexp} the critical exponent is $s_c=\frac{n}{2}-1$, thus system \eqref{systemFA2} satisfies the following regimes 

\begin{equation*}
     L^{2}-
     \begin{cases}
     \mbox{subcritical}, \quad\mbox{if}\quad n=1,\\
     \mbox{critical},\quad\mbox{if}\quad n=2,\\
     \mbox{supercritical},\quad\mbox{if}\quad n\geq 3;
     \end{cases}\quad \mbox{and} \quad
     H^{1}-
     \begin{cases}
     \mbox{subcritical},\quad \mbox{if }\quad 1\leq n\leq 3   \\
     \mbox{critical},\quad\mbox{if}\quad n=4,\\
     \mbox{supercritical},\quad\mbox{if}\quad n\geq 5.
     \end{cases}
 \end{equation*}

The nonlinear functions satisfying  assumptions \ref{H1} and \ref{H2*} are

\begin{equation}\label{fksyst3}
    f_1(z_1,z_2)=\left(\frac{1}{9}|z_1|^{2}+2|z_2|^{2}\right)z_1+\frac{1}{3}\overline{z}_1^{2}z_2 \qquad \mbox{and}\qquad f_2(z_1,z_2)= (9|z_2|^{2}+2|z_1|^{2})z_2+\frac{1}{9}z_{1}^{3}.
\end{equation}

On the other hand, the potential function $F$ appearing in assumption \ref{H3} is given by

\begin{equation}\label{Fsyst3}
    F(z_1,z_2)=\frac{1}{36}|z_1|^4+\frac{9}{4}|z_2|^4+|z_1|^2|z_2|^2+\frac{1}{9}\overline{z}_1^3z_2. 
\end{equation}

This functions is homogeneous of degree $4$, which means that assumption \ref{H5} is satisfied and it is  also clear that $F$ satisfies \ref{H6}.

Observe that taking $\sigma_1=2$ and $\sigma_2=6$, the nonlinear functions, \eqref{fksyst3}, satisfy \ref{H4*}. \\

Finally, we note that the function $F$ appearing in \eqref{Fsyst3} satisfies assumptions \ref{H7} and \ref{H8}. For \ref{H8}, note that the potential $F$ can be decomposed as a sum of  functions $F_s$ satisfying \eqref{condsupmod}

\subsubsection{Ground state solutions}

Existence of \textit{ground states} solutions need  Lemma \ref{identitiesfunctionals}, which  presented some relations between the functionals $I$, $Q$
, $K$ and $P$. In the case of system \eqref{systemFA2} these functionals  become

\begin{equation*}
    P(\psib)=I, \qquad K(\psib)=nI \qquad \mbox{and}\qquad J(\psib)=n^{\frac{n}{2}}(4-n)^{2-\frac{n}{2}}I(\psib).
\end{equation*}
They are consistent with those appearing in Lemma 3.4 in reference \cite{oliveira2018schr}.\\

As a consequence of the existence of ground states solutions we have the best constant in a Gagliardo-Nirenberg-type inequality, which in the case of system \eqref{systemFA2} has the form

\begin{equation*}
C_{opt}=\frac{(4-n)^{\frac{n}{2}-1}}{n^\frac{n}{2}}\frac{1}{\mathcal{Q}(\psib)},
\end{equation*}
where $\psib$ is any \textit{ground state} solution. This is precisely the assertion of   Corollary 3.6 in \cite{oliveira2018schr}. 

\subsubsection{Mass-resonance}

Concerning  mass-resonance condition, \eqref{RC}, we have that for system \eqref{systemFA2}, $\displaystyle\frac{\alpha_1}{\gamma_1}=1$ and $\displaystyle\frac{\alpha_2}{\gamma_2}=\sigma$. Then, condition   \eqref{RC} is equivalent to

\begin{equation*}
    \left(\frac{1}{3}-\frac{\sigma}{9}\right)\mbox{Im}(\overline{z}_{1}^3z_2)=0.
\end{equation*}
Hence, system  \eqref{systemFA2} satisfies the mass-resonance condition if and only if $\sigma=3$. This value coincides with the one considered, for example in reference \cite{oliveira2018schr}.

\subsubsection{About Theorems \ref{thm:globalexistencecondn=51} and \ref{thm:globalexistencecondn=52}}

For  system \eqref{systemFA2}, with cubic nonlinearities, the proof of Theorem \ref{thm:globalexistencecondn=51} part (i) can be founded in \cite[Corollary 3.2]{oliveira2018schr}, part ii.a) is proved in \cite[Corollary 3.3]{oliveira2018schr} and part ii.b) in the same reference, in Theorem 4.2. The authors also proved Theorem \ref{thm:globalexistencecondn=52} part (i) and (ii) in Theorem 3.10 and Theorem 4.6, respectively.

\section*{Acknowledgement}

N.N. was partially supported by Universidad de Costa Rica, through project number C1147.

\end{document}